\newtheorem{theorem}{Theorem}[]
\newtheorem{proposition}[theorem]{Proposition}
\newtheorem{corollary}[theorem]{Corollary}
\newtheorem{definition}[theorem]{Definition}
\newtheorem{remark}[theorem]{Remark}
\newtheorem{conjecture}{Conjecture}[section]
\newcommand \Cm { \mathbb{C}}
\newcommand \Dm { \mathbb{D}}
\newcommand \Rm { \mathbb{R}}
\newcommand \Nm { \mathbb{N}}
\newcommand \Sm { \mathbb{S}}
\newcommand \Zm { \mathbb{Z}}
\renewcommand \L {{\cal L}}
\newcommand \D {{\cal D}}
\newcommand \bt { {\mathbf t}}
\newcommand\x {{\mathrm{x}}}
\newcommand \wtH {\widetilde{H}}
\newcommand \PsiDO {$\Psi$DO}
\newcommand \E { {\cal{E}}}
\newcommand \F { {\cal{F}}}
\newcommand \V { {\cal{V}}}
\newcommand \A { {\cal{A}}}
\newcommand \B { {\cal{B}}}
\newcommand \phg { {\textrm{phg}}}
\newcommand \two { {\mathrm {II}}}
\title{Non-standard Sobolev scales and the mapping properties of the X-ray transform on manifolds with strictly convex boundary}
\author{Fran\c{c}ois Monard\thanks{Department of Mathematics, University of California, Santa Cruz CA 95064. Email: fmonard@ucsc.edu.}}
\date{}
\begin{document}
\maketitle

\begin{abstract}
    This article surveys recent results aiming at obtaining refined mapping estimates for the X-ray transform on a Riemannian manifold with boundary, which leverage the condition that the boundary be strictly geodesically convex. These questions are motivated by classical inverse problems questions (e.g. range characterization, stability estimates, mapping properties on Hilbert scales), and more recently by uncertainty quantification and operator learning questions. 
\end{abstract}

\tableofcontents

\newpage

\section{Introduction}

\subsection{Main setting and questions}

Consider $(M,g)$ a Riemannian manifold with boundary $\partial M$ of dimension $n\ge 2$, with unit tangent bundle $SM := \{(\x,v)\in TM,\quad g_\x(v,v)=1\}$ with canonical projection $SM \stackrel{\pi}{\longrightarrow}M$ and boundary $\partial SM = \{(\x,v)\in SM,\ \x\in \partial M\}$. We denote the geodesic flow by $\varphi_t\colon SM\to SM$ defined on the domain 
\begin{align}
    \D := \{ (\x,v) \in SM,\ t\in [-\tau(\x,-v),\tau(\x,v)]  \},
    \label{eq:D}
\end{align}
where $\tau(\x,v)\in [0,\infty]$ is the first time at which $\pi(\varphi_\tau (\x,v))\in \partial M$. Two important assumptions about $M$ are that (i) the boundary is strictly geodesically convex (in the sense of having positive definite second fundamental form) and (ii) $M$ is non-trapping, in the sense that $\sup_{(\x,v)\in SM} \tau(x,v) <\infty$ (i.e., every unit-speed geodesic exits $M$ in finite time). Conditions (i) and (ii) allow one to realize the space of geodesics through $M$ as the manifold of inward-pointing unit vectors $\partial_+ SM$, a manifold with boundary the ``glancing boundary'' $\partial_0 SM$, both defined as 
\begin{align}
    \partial_+ SM &:= \{ (\x,v)\in SM,\ \x\in \partial M,\ \mu(\x,v) \ge 0\}, \label{eq:DplusSM}\\
    \partial_0 SM &:= \{ (\x,v)\in SM,\ \x\in \partial M,\ \mu(\x,v) = 0\}, \label{eq:D0SM}
\end{align}
where $\mu(\x,v):= g_\x(v,\nu_\x)$, and $\nu_\x$ is the unit inner normal at $\x\in \partial M$. 
There is a natural footpoint map which sends a point $(\x,v)\in SM$ to the first point in its geodesic past hitting the boundary, namely,
\begin{align}
	F\colon SM \to \partial_+ SM, \qquad F(\x,v) := \varphi_{-\tau(\x,-v)}(\x,v), \qquad (\x,v)\in SM.
	\label{eq:footpoint}
\end{align}
Conditions (i) and (ii) guarantee that $\tau$ and $F$ are smooth on $SM \backslash \partial_0 SM$. 

The main operators under study are the X-ray transform $I_0$ and the backprojection operator $I_0^\sharp$, defined as follows: for $f\in C_c^\infty(M)$ and $g\in C_c^\infty(\partial_+ SM)$, 
\begin{align}
    I_0 f(\x,v) &:= \int_0^{\tau(\x,v)} f(\pi(\varphi_t(\x,v)))\ dt, \qquad (\x,v)\in \partial_+ SM, \label{eq:Xray} \\
    I_0^\sharp g(\x) &:= \int_{S_\x} g(F(\x,v))\ dS(v), \qquad \x\in M. \label{eq:backproj}
\end{align}
Classically (see, e.g., \cite{Sharafudtinov1994}), one can easily see that $I_0$ extends to a bounded operator $L^2(M) \to L^2_\mu (\partial_+ SM)$ with Hilbert space adjoint the bounded extension of $I_0^\sharp \colon L^2_\mu (\partial_+ SM)\to L^2(M)$. Here $M$ is equipped with its Riemannian volume form, and $L^2_\mu (\partial_+ SM)$ refers the $L^2$ space for the volume form $\mu d\Sigma^{2n-2}$, where $d\Sigma^{2n-2}$ is the natural Sasaki area form on $\partial SM$.

The transform \eqref{eq:Xray} has been thoroughly studied for the past few decades, and is quite well-understood in many respects. Its injectivity holds in various flexible contexts: in two dimensions, {\em simple}\footnote{A Riemannian manifold $(M,g)$ is called {\bf simple} if in addition to (i) and (ii), it satisfies (iii) $(M,g)$ has no conjugate points.} geometries \cite{Mukhometov1975,Paternain2011a}, including further range characterizations and inversion formulas \cite{Krishnan2010,Pestov2004}; in higher dimensions, geometries admitting convex foliations \cite{Stefanov2014,Uhlmanna}. On the smoothing properties of \eqref{eq:Xray} and stability estimates, it is generally understood that, in the absence of conjugate points, the operator $I_0^\sharp I_0$ is an elliptic pseudodifferential operator of order $-1$ on $M^{int}$. One may then derive the following stability estimate, see \cite{Stefanov2004}: if $(\widetilde{M},\tilde{g})$ denotes a simple neighborhood of $M$ with $\tilde{g}|_{M} = g$, and denoting $\widetilde{I_0^\sharp I_0}$ the associated operator on $\widetilde{M}$, we have $(\widetilde{I_0^\sharp I_0} (e_M f))|_M = I_0^\sharp I_0 f$ for any $f\in L^2(M)$, and moreover, 
\begin{align}
	\|f\|_{L^2(M)} \le C \|\widetilde{I_0^\sharp I_0} (e_M f)\|_{H^1(\widetilde{M}^{int})},
	\label{eq:SUstab}
\end{align}
where $e_M\colon L^2(M)\to L^2(\widetilde{M})$ is the extension-by-zero-to-$\widetilde{M}$ operator. 


Geometries with lots of symmetries enjoy a refined understanding \cite{Natterer2001,Helgason2010}: in some cases, a full Singular Value Decomposition can be derived \cite{Louis1984,Mishra2019,Mishra2022}, and constructive approaches to tensor tomography \cite{Monard2017a,Monard2015a} can be given. 

Nonetheless, some questions (motivated in the next paragraph) have remained unaddressed: one undesireable feature of \eqref{eq:SUstab} is that it depends on the extension $\widetilde{M}$, and as such does not immediately address an $L^2-H^1$ stability estimate {\bf on} $M$. To address this question, classical Sobolev spaces are in fact not always the right spaces to capture boundary behavior of X-ray transforms on convex surfaces, to formulate stability estimates which are valid up to the boundary of $M$, and to find functional contexts where $I_0^\sharp I_0$ is invertible. The main goal of this article is then to survey recent results addressing these questions, broadly formulated as follows: 

\begin{description}
	\item[Q1:] In what function spaces (Sobolev-type spaces, or Fr\'echet spaces) on $M$ and $\partial_+ SM$ can one sharply formulate forward mapping properties and stability estimates for $I_0$ and $I_0^\sharp$ ?
    \item[Q2:] For some ``natural'' choices of weight functions $w_1$ on $M$ and $w_2$ on $\partial_+ SM$, when can we prove that $I_0^\sharp w_2 I_0 w_1$ is invertible ?
\end{description}

The reason for introducing such weights may become clear in what follows: they have specified (singular) boundary behavior at the boundaries of $M$ and $\partial_{+}SM$, which can make the associated operator $I_0^\sharp w_2 I_0 w_1$ better-behaved (than, say, $I_0^\sharp I_0$), while not drastically changing the properties of the operator $I_0$. 

\subsection{Motivation}

A first reason to consider Q1 and Q2 arises from the classical inverse problems agenda surrounding the X-ray transform \eqref{eq:Xray}, an operator that appears in many applied imaging problems in medicine, material science and geoprospection, see, e.g., the recent topical review \cite{Ilmavirta2019} on the topic. The questions that one may ask are: how to characterize the range of $I_0$? how to invert $I_0$? What functional setting describes the mapping properties of $I_0$? These questions become easy to answer in any context where the operator $I_0^\sharp w_2 I_0 w_1 \colon \E\to\F$ is provably (even better, {\em constructively}) invertible: 
\begin{itemize}
	\item Any $f\in \E$ is recovered from $I_0 w_1 f$ via the formula $f = (I_0^\sharp w_2 I_0 w_1)^{-1} I_0^\sharp w_2 I_0 w_1 f$. In all cases below, $\E$ is large enough to contain $C_c^\infty(M)$ and $w_1$ is smooth, non-vanishing on any compact subset of $M^{int}$. 
	\item To project noisy data onto the range of $I_0$, a question of practical interest, one may use the operator $I_0 w_1 (I_0^\sharp w_2 I_0 w_1)^{-1} I_0^\sharp w_2$, obviously idempotent on $\F$.  
\end{itemize}

Generally, obtaining mapping properties of a given linear operator on {\em Hilbert scales}\footnote{A family of Banach (or Hilbert) spaces $\{(E_\alpha, \|\cdot\|_\alpha)\}_{\alpha_0<\alpha<\beta_0}$ is a {\em Banach scale} in the sense of \cite{Krein1966} if (1) $E_\beta$ is densely embedded in $E_\alpha$ when $\beta>\alpha$ with $\|x\|_\alpha \le_{\alpha,\beta} \|x\|_\beta$ for all $x\in E_\beta$, and (2) interpolation inequalities hold in the sense that there is a function $C(\alpha,\beta,\gamma)$ finite at all points of the domain $\alpha_0\le \alpha<\beta<\gamma\le \beta_0$ such that $\|x\|_\beta\le C(\alpha,\beta,\gamma) \|x\|_\alpha^{\frac{\gamma-\beta}{\gamma-\alpha}} \|x\|_{\gamma}^{\frac{\beta-\alpha}{\gamma-\alpha}}$ for all $x\in E_\gamma$.} that accurately capture that operator's degree of ill-posedness, makes the operator amenable to the most refined theoretical frameworks for addressing practical aspects such as regularization theory \cite{Natterer1984,Engl1996}, or more recently, operator learning \cite{Rastogi2023}.

A more recent and strong motivation for the recent progress on the topic, was a recent theoretical development in the non-parametric, infinite-dimensional Bayesian analysis of inverse problems in the presence of statistical noise, which led to the recent works \cite{Monard2017,Monard2019,Monard2021,Bohr2021a} (some of whose results are quoted below), see also the recent monograph \cite{Nickl2023}. Given a forward operator $A$ (in our case, $A = I_0$), under the assumption that the data $A f$ is polluted by random Gaussian noise, the problem becomes to reconstruct $f$ from $d = A f + \varepsilon$. The idea of a single ``ground truth reconstruction'' loses its meaning, and is replaced by the study of ``all possible reconstructions and their probabilities'' (i.e., in the Bayesian framework, the posterior random variable $f|d$ and its probability density), the design of trustworthy estimators for the 'ground truth' $f$ (in the frequentist viewpoint), and their computability. Many new statistical results in Bayesian nonparametrics have been derived, resting on hypotheses which require a refined understanding of the forward operator $A$, in particular functional contexts where $A^* A$ is invertible, and stability estimates for $A^* A$ or $A$ which accurately capture the order of smoothing of the forward operator. In the present case where $A = I_0$, obtaining these results requires the design of Sobolev type spaces which sharply capture the mapping properties of $I_0$ on the interior, but also at the boundary, and requires a deeper look at specific types of boundary behaviors, and how they map under $I_0$ and $I_0^\sharp$.



\subsection{Normal operators and boundary behavior: some non-trivial aspects to be reconciled} 

To clarify why boundary behavior can be a thorny question in this context, and as a precursor to the following sections, let us cover a few examples:  

\begin{itemize}
    \item Considering the Euclidean disk $\Dm$ in polar coordinates $\rho e^{i\omega}$, a direct calculation shows that $I_0^\sharp I_0 (1) = 2 \int_{0}^{2\pi} \sqrt{1-\rho^2 \cos^2\theta}\ d\theta$. This is the complete elliptic integral of the second kind which, as function on $\Dm$, belongs to $C^\infty + d\log(d) C^\infty$, if $d := 1-\rho^2$, while the function $f(\rho e^{i\omega}) = 1$ belongs to $C^\infty(\Dm)$. 

    \item On the other hand, if $f(\rho e^{i\omega}) = (1-\rho^2)^{-1/2}$, then a direct calculation shows that $I_0^\sharp I_0 f = 1$. This means that some integrands which blow-up near the boundary may be mapped, via $I_0^\sharp I_0$, to functions which are smooth up to the boundary.

    \item More generally, one may prove that the above situation in fact holds in quite general contexts, as the isomorphism result proved in \cite{Monard2017} (discussed in Section \ref{sec:BdM} below) holds for any simple Riemannian manifold with boundary: 
	\begin{align*}
	    I_0^\sharp I_0 \colon d^{-1/2} C^\infty(M) \stackrel{\approx}{\longrightarrow} C^\infty(M).
	\end{align*}
	Moreover in this case, the domain and codomain above can be thought of as intersection spaces of families of Sobolev spaces where one also gets homeomorphic properties 
	\begin{align}
	    I_0^\sharp I_0 \colon H^{s,(-1/2)}(M) \stackrel{\approx}{\longrightarrow} H^{s+1}(M) \qquad s>-1.
	    \label{eq:MNPmapprop}
	\end{align}
	The spaces on the right side are the classical Sobolev spaces and those on the left side are so-called H\"ormander $-1/2$-transmission spaces. Although this result was a first of its kind in this context, the H\"ormander transmission spaces are difficult to work with, and it is unclear how to next characterize $I_0^\sharp I_0 (H^{s+1}(M))$. In addition, it is a classical result that the scale on the right side of \eqref{eq:MNPmapprop} is {\em not} a Hilbert scale \cite{Neubauer1988}, mostly due to boundary issues (elements in these spaces contain ``too many'' traces, preventing the interpolation inequalities). Hence, off-the-shelf results which work best in the context of Hilbert scales (e.g. \cite{Natterer1984,Engl1996,Rastogi2023}) may not be readily available in this context.

    \item One of the ways to alleviate said issues is to introduce a singular weight between $I_0$ and $I_0^\sharp$. Indeed in \cite{Monard2019a}, with $\Dm$ the Euclidean unit disk, the author showed that the operator $I_0^\sharp \frac{1}{\mu} I_0$ enjoyed the following isomorphic mapping properties
	\begin{align*}
	    I_0^\sharp \frac{1}{\mu} I_0 \colon C^\infty(\Dm)\colon C^\infty(\Dm) \stackrel{\approx}{\longrightarrow} C^\infty(\Dm), \qquad I_0^\sharp \frac{1}{\mu} I_0 &\colon \wtH^s(\Dm) \stackrel{\approx}{\longrightarrow} \wtH^s(\Dm), \qquad s\ge 0,
	\end{align*}
	where $\wtH^{s}(\Dm)\subset L^2(\Dm)$ is the domain space of a degenerate elliptic operator $\L$ which is $L^2 (\Dm)$-essentially self-adjoint when equipped with the core domain $C^\infty(\Dm)$. At the cost of introducing the singular weight $\frac{1}{\mu}$, the spaces $\{\wtH^{s}(\Dm)\}_{s\ge 0}$ form a Hilbert scale and allow to capture all mapping properties of $I_0^\sharp \frac{1}{\mu} I_0$ and its iterates.

\end{itemize}

\paragraph{Smoothness on $\partial_+ SM$, domain of $I_0^\sharp$ and range of $I_0$.} As seen above, normal operators (i.e. compositions of $I_0^\sharp$ and $I_0$) have the advantage to map functions on $M$ back to functions on $M$, and becomes invertible in appropriate Sobolev spaces. By contrast, moving to the study of $I_0$ or $I_0^\sharp$ alone requires addressing additional issues, including the infinite-dimensional kernel of $I_0^\sharp$, and proper domains of definition and Sobolev scales on $\partial_+ SM$. 

Understanding the mapping properties of $I_0^\sharp$ alone requires to define spaces on $\partial_+ SM$ which depend on the {\em scattering relation} $\alpha\colon \partial SM \to \partial SM$ of the associated manifold, defined by 
\begin{align}
	\alpha(x,v) = \varphi_{\tau(x,v)} (x,v), \qquad (x,v)\in \partial_+ SM. 
	\label{eq:scatrel}
\end{align}
Then for $u\in L^2(\partial_+ SM, \mu\ d\Sigma^{2n-2})$, one may define $A_\pm u\in L^2(\partial SM, |\mu|\ d\Sigma^{2n-2})$ by 
\begin{align}
	u(x,v) = \left\{ 
	\begin{array}{ll}
		u(x,v), & (x,v)\in \partial_+ SM \\
		u(\alpha(x,v)), & (x,v)\in \partial_- SM.
	\end{array}
	\right.
	\label{eq:Apm}
\end{align}
At the smooth level, as 'natural' domains for $A_\pm$ to land on $C^\infty(SM)$, it is customary to define 
\begin{align}
	C_{\alpha,\pm}^\infty(\partial_+ SM) := \{u\in C^\infty(\partial_+ SM), \qquad A_\pm u \in C^\infty(\partial SM)\}.
	\label{eq:Calphapm}
\end{align}

The space $C_{\alpha,+}^\infty$ was first introduced (and denoted $C_\alpha^\infty$ there) during the resolution of the boundary rigidity conjecture for simple surfaces by Pestov and Uhlmann in \cite{Pestov2005}. In particular, via the characterization
\begin{align}
	C_{\alpha,+}^\infty(\partial_+ SM) := \{u\in C^\infty(\partial_+ SM),\quad u\circ F \in C^\infty(SM)\}
	\label{eq:PUcharac}
\end{align}
proved there using fold theory, and since $I_0^\sharp u = \pi_\star (u\circ F)$, this immediately implies that 
\begin{align}
	I_0^\sharp (C_{\alpha,+}^\infty(\partial_+ SM))\subset C^\infty(M).
	\label{eq:PUI0sharp}
\end{align}
For the purpose of the current review, the take-home message is that 'natural' spaces for $I_0^\sharp$ (and, later in Sec. \ref{sec:phgonmwb}, 'good' coordinate systems for capturing the mapping properties of $I_0$) need to depend on the scattering relation of the underlying manifold. As a remark which is fundamental in the resolution of the boundary rigidity of surfaces, the same authors show, via microlocal arguments, that if $(M,g)$ is a simple surface, \eqref{eq:PUI0sharp} is in fact an equality. 

Moving on to the description of the range of $I_0$, the scattering relation \eqref{eq:scatrel} plays again a fundamental role on simple surfaces, as is shown by the same authors in \cite{Pestov2004}. To state their result, we let $A_\pm^*$ be the adjoint of \eqref{eq:Apm} with expression 
\begin{align*}
	A_\pm u (x,v) = u(x,v) \pm u(\alpha(x,v)), \qquad (x,v)\in \partial_+ SM.
\end{align*}
On the fibers of $SM$ (and in particular, of $\partial SM$), we also define the {\em fiberwise Hilbert transform}\footnote{See \cite{Pestov2004} for a definition. It is occulted here as it will not be used.} $H\colon L^2(SM)\to L^2(SM)$, splitting as $H = H_+ + H_-$, where $H_{+/-}$ is the restriction of $H$ to fiberwise even/odd functions. Then \cite[Theorem 4.4]{Pestov2004} states that on any simple surface $(M,g)$, 
\begin{align}
	I_0 (C^\infty(M)) = P_- (C_{\alpha,+}^\infty(\partial_+ SM)), \qquad \text{where}\quad P_\pm := A_-^* H_\pm A_+.
	\label{eq:PUrange}
\end{align}
The author proved in \cite[Theorem 2.3]{Monard2015a} that such a characterization, in the Euclidean case, was equivalent to the classical Helgason-Ludwig-Gelfand-Graev moment conditions. On general simple surfaces, the operator $P_-$ remains somewhat mysterious, though in cases with symmetries (namely, simple geodesic disks in constant curvature), a full Singular Value Decomposition (SVD) can be derived, see \cite[Sec. 3]{Mishra2019}, allowing in turn a sharp understanding of the infinite-dimensional co-kernel of $I_0$. Further, the operator $P_-$ somewhat presents the disadvantage to not produce a way to project noisy data onto the range of $I_0$. In fact, for symmetry reasons, $P_- \circ P_- = 0$. To address this, the author introduced in \cite{Monard2015a} the operator $C_- := \frac{1}{2} A_-^* H_- A_- \colon C_{\alpha,-,-}^\infty(\partial_+ SM) \to C_{\alpha,-,-}^\infty(\partial_+ SM)$, where for $\sigma = \pm$, we define
\begin{align}
	C_{\alpha,\sigma,\pm}^\infty (\partial_+ SM) = \{u \in C_{\alpha,\sigma}^\infty(\partial_+ SM),\ u(\x,v) = \pm u(\alpha(\x,-v))\}.
	\label{eq:Cpmpm}
\end{align}
On simple geodesic disks in constant curvature, the SVD of $C_-$ can also be computed, see \cite{Monard2015a,Mishra2019}, and the range characterization reads, in terms of $C_-$,
\begin{align*}
	I_0 (C^\infty(M)) = \ker C_-.
\end{align*}
Moreover on such surfaces, the operator $Id + C_-^2$ is an $L^2(\partial_+ SM, d\Sigma^2)$-projector onto the range of $I_0$. Sobolev versions of the range descriptions above will also be given in the next sections.

\subsection{Outline} 

The study of Q1 and Q2 involves several results which make use of various toolboxes: Fourier-analytic, microlocal, geometric microlocal, energy-identity based. In what follows, we will first cover examples in Section \ref{sec:EuclDisk} of symmetric manifolds (Euclidean disk and constant curvature disks). In such cases, one knows Singular Value Decompositions of various weighted realizations of $I_0$, as well as relations with distinguished differential operators, both of which help inform the design of functional settings that sharply answer Q1 and Q2. We then move to results on forward mapping properties (and stability estimates) which hold for larger classes of manifolds, covering results for $I_0$ in Section \ref{sec:forward_Xray}, and results for $I_0^\sharp$ in Section \ref{sec:forward_backproj}. Combining these results, we will first formulate some consequences and a conjecture in Section \ref{sec:isomorphism}, regarding contexts where operators of the form $I_0^\sharp w_2 I_0 w_1$ can be made isomorphisms of $C^\infty(M)$, and we will discuss the cases where the conjecture is true, and the methodology to prove them. Finally, in Section \ref{sec:atrt}, we will discuss how such results can be extended to the context of {\em attenuated X-ray transforms}, along with their range of applicability. We conclude with some perspectives in Section \ref{sec:perspectives}.

\paragraph{Preliminary comments.} Results are sometimes formulated as theorems, sometimes incorporated in the text for the sake of the narrative. Proofs will be given at a sketchy level, and an interested reader is often encouraged to look at the original article for details. Some results will be at times referencing results appearing in the recent book \cite{Paternain2023}, an important pedagogical resource in the author's view. When this is the case, we will showcase the main results and redirect the interested reader there to minimize overlap.


\section{Euclidean disk and symmetric cases as reference models} \label{sec:EuclDisk}

In order to construct sharp functional settings where to capture mapping properties, we first draw intuition from realizations of the X-ray transform where the Singular Value Decomposition (SVD) is explicitly known. This includes the case of the X-ray transform on the Euclidean disk (Sec. \ref{sec:gammazero}) and its singularly weighted versions (Sec. \ref{sec:gammanonzero}), as well as analogous situations on any simple geodesic disk in constant curvature (Sec. \ref{sec:CCDs}). 

While the SVD in itself helps describe the range in distinguished bases, the work in what follows also consists in identifying when the spaces that result can be defined in terms of operators which may be amenable to be generalized to non-symmetric cases.

\subsection{Euclidean disk - unweighted $L^2$} \label{sec:gammazero}



Let $\Dm = \{z = \rho e^{i\omega} \in \Cm,\ |z|^2 \le 1 \}$ be the unit disk equipped with the standard Euclidean metric $|dz|^2$. We parameterize the inward boundary $\partial_+ S\Dm$ (defined in \eqref{eq:DplusSM}) as $\Sm^1_\beta \times [-\pi/2,\pi/2]_\alpha$, where $\beta$ parameterizes the boundary point $e^{i\beta}$ and $\alpha$ parameterizes the inward pointing vector $e^{i(\beta+\alpha+\pi)}$ above the basepoint $e^{i\beta}$. In these coordinates, $d\Sigma^2 = d\beta d\alpha$. The unique oriented geodesic passing through $(\beta,\alpha)\in \partial_+ S\Dm$ has equation 
\begin{align*}
	\gamma_{\beta,\alpha}(t) = e^{i\beta} + t e^{i(\beta+\pi+\alpha)}, \quad t\in [0,2\cos \alpha]. 
\end{align*}
Each unoriented geodesic appears twice, in particular we have the symmetry
\begin{align}
	\gamma_{\beta+\pi+2\alpha,-\alpha}(t) = \gamma_{\beta,\alpha} (2\cos\alpha-t), \qquad t\in [0,2\cos\alpha].
	\label{eq:symmetry}
\end{align}
The X-ray transform $I_0 f (\beta,\alpha) = \int_0^{2\cos\alpha} f(\gamma_{\beta,\alpha}(t))\ dt$ makes sense for $f\in C_c^\infty(\Dm^{int})$ and extends into a bounded operator 
\begin{align}
	I_0 \colon L^2(\Dm, |dz|^2) \to L_+^2(\partial_+ S\Dm, d\beta d\alpha),	
	\label{eq:I0L2}
\end{align}
with Hilbert space adjoint $I_0^\sharp \frac{1}{\mu}$, where $\mu = \cos\alpha$. The '$+$' subscript in the co-domain of $I_0$ indicates evenness with respect to the map $(\beta,\alpha)\mapsto (\beta+\pi+2\alpha, -\alpha)$ which, in light of \eqref{eq:symmetry}, arises because $I_0 f$ is even with respect to geodesic orientation.

\subsubsection{Singular Value Decomposition} We first briefly describe the SVD of $I_0$ on the Euclidean disk. 
Define, on $\partial_+ S\Dm$, 
\begin{align}
    \psi_{n,k} := \frac{(-1)^n}{4\pi} e^{i(n-2k)(\beta+\alpha)} \left( e^{i(n+1)\alpha} + (-1)^n e^{-i(n+1)\alpha} \right), \qquad n\ge 0, \qquad k\in \Zm.
    \label{eq:psink}
\end{align}
These functions belong to $C_{\alpha,-,+}^\infty( \partial_+ S\Dm)$ and make up a Hilbert basis of $L^2_+ (\partial_+ S\Dm)$ (the co-domain in \eqref{eq:I0L2}), each with norm $\|\psi_{n,k}\|_{L^2(\partial_+ S\Dm)}^2 = 1/4$. One may then show that $I_0^\sharp \left[\frac{\psi_{n,k}}{\mu}\right] = 0$ if $k\notin \{0, \dots, n\}$ and otherwise we define $Z_{n,k} := I_0^\sharp \left[\frac{\psi_{n,k}}{\mu}\right]$ for $n\ge 0$ and $0\le k\le n$. The latter functions coincide with the Zernike polynomials as defined in \cite{Kazantsev2004}, and make up an orthogonal Hilbert basis of $L^2(\Dm)$, each with norm $\|Z_{n,k}\|^2_{L^2(\Dm)} = \frac{\pi}{n+1}$. The SVD of $I_0$ is then immediately given by (see, e.g., \cite{Louis1984,Mishra2019})
\begin{align}
	\widehat{Z_{n,k}},\ \widehat{\psi_{n,k}},\ \frac{\sqrt{4\pi}}{\sqrt{n+1}}, \qquad n\ge 0,\ 0\le k\le n,
	\label{eq:SVDI0}
\end{align}
and we can na\"ively formulate the range as follows
\begin{align}
    I_0 (L^2 (\Dm)) = \left\{ \sum_{n\ge 0} \sum_{k=0}^n a_{n,k} \frac{\sqrt{4\pi}}{\sqrt{n+1}} \widehat{\psi_{n,k}}, \qquad \sum_{n\ge 0}\sum_{k=0}^n |a_{n,k}|^2 <\infty \right\} \subset L^2_+ (\partial_+ S\Dm).
    \label{eq:naiverange2}
\end{align}

To describe the last right-hand side without the use of special functions, there are two things to address:
\begin{itemize}
	\item[(i)] To find Sobolev scales that capture the spectral decay incurred by the '$\frac{1}{2}$-smoothing'.
	\item[(ii)] To find operators that capture the infinite-dimensional co-kernel spanned by those $\psi_{n,k}$ for which $k\notin \{0, \dots, n\}$. 
\end{itemize}

\subsubsection{Some non-standard Sobolev scales} 

To address (i), we now introduce the operators $T = \partial_\beta-\partial_\alpha$ on $\partial S\Dm$, and on $\Dm$, 
\begin{align}
	\L := - \rho^{-1} \partial_\rho \left( (1-\rho^2) \rho\partial_\rho \right) - \rho^{-2} \partial_\omega^2 + id.
	\label{eq:Lzero}
\end{align}
One may show that the operator $-T^2$, equipped with the domain $C_{\alpha,-,+}^\infty(\partial_+ S\Dm)$ (dense in $L^2_+ (\partial_+ S\Dm)$) is essentially $L_+^2(\partial_+ S\Dm)$ self-adjoint (see \cite[Theorem 7, case $\gamma=0$]{Mishra2022}), and that $\L$, equipped with the domain $C^\infty(\Dm)$, is essentially $L^2(\Dm)$-self-adjoint (see \cite[Theorem 6, case $\gamma=0$]{Mishra2022}). Moreover, the important intertwining property holds: $I_0^\sharp \frac{1}{\mu} \circ (-T^2) = \L \circ I_0^\sharp \frac{1}{\mu}$ (see \cite[Theorem 9]{Monard2019a}). Together with the fact that $(-T^2) \psi_{n,k} = (n+1)^2 \psi_{n,k}$ (a direct calculation), we can obtain a full eigenvalue decomposition, as 
\begin{align*}
    \L Z_{n,k} = \L I_0^\sharp \frac{\psi_{n,k}}{\mu} = I_0^\sharp \frac{1}{\mu} (-T^2)\psi_{n,k} = I_0^\sharp \frac{1}{\mu} (n+1)^2 \psi_{n,k} = (n+1)^2 Z_{n,k},
\end{align*}
for any $n\ge 0$ and $k\in \{0, \dots, n\}$. It may then be natural to construct Sobolev spaces in terms of the operator $\L$. Namely, we define the Zernike-Sobolev scale
\begin{align}
	\wtH^s(\Dm) = \text{dom}(\L^{s/2}) = \left\{ \sum_{n\ge 0} \sum_{k=0}^n f_{n,k} \widehat{Z_{n,k}}, \quad \sum_{n\ge 0} \sum_{k=0}^n (n+1)^{2s} |f_{n,k}|^2 \right\}.
	\label{eq:SobolevZernike}
\end{align}
On the data side one may also define
\begin{align}
    H_{T,+}^{s} (\partial_+ S\Dm) := \text{dom} ( (-T^2)^{s/2}) = \left\{ \sum_{n\ge 0} \sum_{k\in \Zm} a_{n,k} \widehat{\psi_{n,k}}, \qquad \sum_{n\ge 0}\sum_{k=0}^n (n+1)^{2s} |a_{n,k}|^2 <\infty \right\}
    \label{eq:scaledata}
\end{align}
and notice that the right side of \eqref{eq:naiverange2} is a closed subspace of $H_{T,+}^{1/2}(\partial_+ S\Dm)$, with orthocomplement the $H_{T,+}^{1/2}$ span of $\{\psi_{n,k},\ n\ge 0,\ k\notin \{0,\dots, n\}\}$.

\paragraph{Operators that help capture the co-kernel of $I_0$.} To address (ii), the operator $C_- := \frac{1}{2} A_-^* H A-$ defined at the end of the introduction has spectrum $\pm i$ on $\{\psi_{n,k},\ n\ge 0,\ k\notin \{0,\dots, n\}\}$ and vanishes on the range of $I_0$. Moreover, $[C_-, -T^2] = 0$, so that $C_-\colon H_{T,+}^{s+\frac{1}{2}} (\partial_+ S\Dm) \to H_{T,+}^{s+\frac{1}{2}} (\partial_+ S\Dm)$ is bounded for any $s\in \Rm$, and its kernel is exactly the $H_{T,+}^{s+\frac{1}{2}}$-span of $\{\psi_{n,k},\ n\ge 0,\ 0\le k\le n\}$. In the end we arrive at the characterization, see \cite[Theorem 6]{Monard2019a}: 
\begin{align}
    I_0 (\wtH^s(\Dm)) = H_{T,+}^{s+\frac{1}{2}} (\partial_+ S\Dm) \cap \ker (C_-) = H_{T,+}^{s+\frac{1}{2}} (\partial_+ S\Dm) \cap \text{ran} (P_-),
    \label{eq:rangecharac}
\end{align}
moreover there is a constant $C_s$ such that the {\em isometric} estimate holds: 
\begin{align}
	\|f\|_{\wtH^s(\Dm)} = C_s \|I_0 f\|_{H_{T,+}^{s+\frac{1}{2}} (\partial_+ S\Dm)} \qquad \text{for all } f\in \wtH^s(\Dm).
	\label{eq:I0isom}
\end{align}
The isometric feature of \eqref{eq:I0isom} is what makes the {\em anisotropic} Sobolev scale $H_{T,+}^{s+\frac{1}{2}} (\partial_+ S\Dm)$ somewhat desirable. However it should be noted that one may also define a more 'isotropic' Dirichlet Sobolev scale on $\partial_+ S\Dm$: there, the elliptic operator $-T^2 - \partial_\beta^2$ is essentially $L^2_+ (\partial_+ S\Dm)$-self-adjoint on $C_{\alpha,-,+}^\infty(\partial_+ S\Dm)$, with closure which we'll denote $(-T^2 - \partial_\beta^2)_D$ (where the subscript $D$ indicates that it can be thought of as a Dirichlet realization) and having $\{\psi_{n,k}\}_{n,k}$ as a complete set of eigenfunctions. Then one may define the scale of spaces
\begin{align}
	H^s(\partial_+ S\Dm) := \text{dom} ((-T^2 - \partial_\beta^2)_D^{s/2}), \qquad s\in \Rm.
	\label{eq:scale_iso}
\end{align}
Then \cite[Proposition 16]{Monard2019a} shows that on $I_0 (\wtH^s(\Dm))$, the topologies $H_{T,+}^{s+\frac{1}{2}} (\partial_+ S\Dm)$ and $H^{s+\frac{1}{2}} (\partial_+ S\Dm)$ are equivalent and hence one may also show that for any $s\in \Rm$, there exist positive constants $C'_{s}, C''_{s}$ such that 
\begin{align*}
	C'_{s} \|I_0 f\|_{H^{s+\frac{1}{2}} (\partial_+ S\Dm)} \le \|f\|_{\wtH^s(\Dm)} \le C''_{s} \|I_0 f\|_{H^{s+\frac{1}{2}} (\partial_+ S\Dm)} \qquad \text{for all } f\in \wtH^s(\Dm).
\end{align*}

\subsubsection{Isomorphism property} 

Now considering the $L^2(\Dm)$-self-adjoint operators $I_0^\sharp \frac{1}{\mu} I_0$, SVD \eqref{eq:SVDI0} immediately implies the isometric estimate 
\begin{align*}
	I_0^\sharp \frac{1}{\mu} I_0\colon \wtH^{s} (\Dm) \stackrel{\approx}{\longrightarrow} \wtH^{s+1} (\Dm), \qquad s\ge 0.
\end{align*}
Upon proving in \cite[Theorem 12]{Monard2019a} that $\cap_{s\in \Rm} \wtH^s(\Dm) = C^\infty(\Dm)$, then $I_0^\sharp \frac{1}{\mu} I_0$ is an automorphism of $C^\infty(\Dm)$, a special case of Theorem \ref{thm:Cinf} below. More precisely, $C^\infty(\Dm)$ can be equipped with the graded family of semi-norms $\{\wtH^s(\Dm)\}_{s\in \Nm_0}$, on which $I_0^\sharp \frac{1}{\mu} I_0$ is a {\em tame} linear map with tame inverse, see also Remark \ref{rem:tame} below. 

\subsubsection{Functional relations} 

From looking at how $\L$ and $I_0^* \frac{1}{\mu} I_0$ act on the Zernike basis, we can deduce the relation 
\begin{align}
    (I_0^* \frac{1}{\mu} I_0)^2 \L_0 = (4\pi)^2 id \quad \text{on } C^\infty(\Dm). 
    \label{eq:funcRel}
\end{align}
Relation \eqref{eq:funcRel} can be viewed as the ``surface-with-boundary'' analogue of the famous formula
\begin{align*}
	(R^t R)^2 (-\Delta) = (4\pi)^2 id	
\end{align*}
on $\Rm^2$, where $Rf(\theta,s) = \int_\Rm f(s\hat\theta + t\hat\theta^\perp)$, $(\theta,s)\in \Sm^1\times\Rm$, denotes the classical Radon transform and $R^t g(x) = \int_{\Sm^1} g(\theta,x\cdot\hat\theta)\ d\theta$ is its formal $L^2(\Rm^2)-L^2(\Rm\times \Sm^1)$-adjoint.

\subsection{Euclidean disk - singularly weighted $L^2$} \label{sec:gammanonzero}

A continuum of weighted realizations of $I_0$ have had their Singular Value Decomposition known for a long time, and this allows to give other functional settings where the X-ray transform or its normal realizations can be understood. We summarize here the results of \cite{Mishra2022}. 

Let $d = 1-\rho^2$ be a distinguished boundary defining function for $\Dm$. For $\gamma>-1$, one may define the weighted transform $I_0 d^\gamma$ via the formula $I_0 d^\gamma (f) := I_0 (d^\gamma f)$ for $f\in C^\infty(\Dm)$. Such a transform is a bounded operator in the following context 
\begin{align}
	I_0 d^\gamma \colon L^2(\Dm, d^\gamma\ |dz|^2) \to L^2(\partial_+ S\Dm, \mu^{-2\gamma}\ d\Sigma^2),
	\label{eq:dgammaXray}
\end{align}
with Hilbert space adjoint $I_0^\sharp \mu^{-2\gamma-1}$. The case $\gamma=0$ recovers Section \ref{sec:gammazero} which, as we will see below, was special in some respects. We briefly present the SVD of \eqref{eq:dgammaXray} as presented in \cite[Theorem 1]{Mishra2022}, though it dates back to at least \cite{Louis1984} and, in general dimension, applies to the {\em Radon} transform on singularly weighted $L^2$ spaces.

\subsubsection{Singular Value Decomposition} 

Similarly to \eqref{eq:psink}, define on $\partial_+ S\Dm$ 
\begin{align}
	\psi_{n,k}^\gamma := \mu^{2\gamma+1} \frac{(-1)^n}{2\pi} e^{i(n-2k)(\beta+\alpha)} L_n^\gamma (\sin\alpha), \qquad n\ge 0, \qquad k\in \Zm
	\label{eq:psinkgamma}
\end{align}
where $L_n^\gamma\colon [-1,1]\to \Rm$ is the $n$-th orthogonal polynomial for the weight $(1-x^2)^{\gamma+\frac{1}{2}}$, fixing any normalization condition. Then, following a similar structure to the case $\gamma=0$, one finds that $I_0^\sharp \mu^{-2\gamma-1}\colon L^2(\partial_+ S\Dm, \mu^{-2\gamma})\to L^2(\Dm, d^\gamma)$ has kernel the $L_\gamma^2$-span of $\{\widehat{\psi_{n,k}^\gamma}, n\ge 0, \quad k\notin \{0,\dots,n\}\}$. In particular, the operator \eqref{eq:dgammaXray} has infinite-dimensional co-kernel. Moreover, upon defining
\begin{align}
	Z_{n,k}^\gamma := I_0^\sharp \mu^{-2\gamma-1} \psi_{n,k}^\gamma, \qquad n\ge 0, \quad 0\le k\le n,
	\label{eq:gZernike}
\end{align}
the SVD of $I_0 d^\gamma$ co-restricted to $L^2(\partial_+ S\Dm, \mu^{-2\gamma}\ d\Sigma^2) / \ker I_0^\sharp \mu^{-2\gamma-1}$ is given by $(\widehat{Z_{n,k}^\gamma}, \widehat{\psi_{n,k}^\gamma}, \sigma_{n,k})$, where 
\begin{align}
	\sigma_{n,k} := \frac{2^{\gamma+1}\sqrt{\pi}}{(n+1)^{1/2}} \left(\frac{B(n-k+1+\gamma,k+1+\gamma)}{B(n-k+1,k+1)}\right)^{1/2}, \qquad n\ge 0, \quad 0\le k\le n,
	\label{eq:singularValues}
\end{align}
and where $B(x,y) = \int_0^1 u^{x-1}(1-u)^{y-1}\ du$ denotes the Beta function. The functions defined in \eqref{eq:gZernike} are, up to scaling, the generalized Zernike disk polynomials as defined in \cite{Wuensche2005}, and they enjoy being eigenfunctions of a degenerate elliptic operator $\L_\gamma$ given in polar coordinates $z = \rho e^{i\omega}$ by
\begin{align}
	\L_\gamma := - \rho^{-1} (1-\rho^2)^{-\gamma} \partial_\rho \left( \rho (1-\rho^2)^{\gamma+1} \partial_\rho \right) - \rho^{-2} \partial_\omega^2 + (1+\gamma)^2 id.
	\label{eq:Lgamma}
\end{align}
Note that $\L_0$ is the operator defined earlier in \eqref{eq:Lzero}. In a similar fashion as above, one may show that $\L_\gamma Z_{n,k}^\gamma = (n+1+\gamma)^2 Z_{n,k}^\gamma$, and this motivates the definition of the following Sobolev spaces 
\begin{align}
    \wtH^{s,\gamma} (\Dm) = \text{dom} (\L_\gamma^{s/2}) = \left\{ u = \sum_{n\ge 0} \sum_{k=0}^n a_{n,k} \widehat{Z_{n,k}^\gamma}, \qquad \sum_{n,k} (n+1+\gamma)^{2s} |a_{n,k}|^2 <\infty\right\}.
    \label{eq:Hsgamma}
\end{align}
Combining this definition and the SVD of $I_0 d^\gamma$, it immediately follows that 
\begin{align}
    I_0 d^\gamma \left( \wtH^{s,\gamma}(\Dm) \right) = \left\{ u = \sum_{n\ge 0} \sum_{k=0}^n a_{n,k} \sigma_{n,k}^\gamma \widehat{\psi_{n,k}^\gamma}, \qquad \sum_{n,k} (n+1+\gamma)^{2s} |a_{n,k}|^2 <\infty\right\}. 
    \label{eq:rangegammanotzero}
\end{align}
The issues (i) and (ii) addressed in Section \ref{sec:gammazero} can then be considered here as well, however if $\gamma\ne 0$, neither is straighforward: regarding (i) if $\gamma\ne 0$, the dependence of the singular values $\sigma_{n,k}^\gamma$ on $k$ makes it difficult to express the spectral decay on $\partial_+ S\Dm$ on a Sobolev scale defined in terms of powers of differential operators, see \cite[Remark 5]{Mishra2022}; regarding (ii), whether there exist generalizations of operators $P_-$ or $C_-$ with a simple expression, allowing to describe the co-kernel of $I_0 d^\gamma$, is an open question at present. 

\subsubsection{Isomorphism property}



Generalizing what was presented in Section \ref{sec:gammazero}, it is proved in \cite{Mishra2022} the following: 
\begin{theorem}[Theorem 2 in \cite{Mishra2022}]\label{thm:Cinf}
    Let $\gamma>-1$ be fixed. Then the operator $I_0^\sharp \mu^{-2\gamma-1} I_0 d^\gamma$ is an isomorphism of $C^\infty(\Dm)$.     
\end{theorem}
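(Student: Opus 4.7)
The operator $A := I_0^\sharp \mu^{-2\gamma-1} I_0 d^\gamma$ is $L^2(\Dm, d^\gamma |dz|^2)$-self-adjoint, and the SVD recalled in \eqref{eq:singularValues}--\eqref{eq:rangegammanotzero} diagonalizes it as $A \widehat{Z_{n,k}^\gamma} = \sigma_{n,k}^2 \widehat{Z_{n,k}^\gamma}$ for $n\ge 0$, $0\le k\le n$, with all $\sigma_{n,k}>0$. The plan is to reduce the theorem to two ingredients: (a) the identification $C^\infty(\Dm) = \bigcap_{s\ge 0} \wtH^{s,\gamma}(\Dm)$, extending Theorem 12 of \cite{Monard2019a} to $\gamma\ne 0$; and (b) uniform (in $k\in\{0,\dots,n\}$) polynomial two-sided bounds of the form
\begin{align*}
    c_1 (n+1)^{-p_-} \le \sigma_{n,k}^2 \le c_2 (n+1)^{-p_+}, \qquad p_\pm > 0.
\end{align*}
Granted (a) and (b), the conclusion is immediate: if $f = \sum a_{n,k} \widehat{Z_{n,k}^\gamma} \in C^\infty(\Dm)$, then the coefficients $(a_{n,k})$ decay faster than any power of $n^{-1}$; the upper bound in (b) shows $Af\in C^\infty(\Dm)$, while the lower bound shows that the spectrally-defined inverse $A^{-1}\bigl(\sum b_{n,k}\widehat{Z_{n,k}^\gamma}\bigr) = \sum \sigma_{n,k}^{-2} b_{n,k}\widehat{Z_{n,k}^\gamma}$ also preserves rapid decay, hence maps $C^\infty(\Dm)\to C^\infty(\Dm)$. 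Injectivity is automatic from positivity of the $\sigma_{n,k}$.

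For (b), I would rewrite the Beta-ratio appearing in \eqref{eq:singularValues} via
\begin{align*}
    \frac{B(a+\gamma, b+\gamma)}{B(a,b)} = \mathbb{E}_{X\sim \mathrm{Beta}(a,b)}\!\left[ (X(1-X))^\gamma \right], \qquad a,b > 0, \quad \gamma > -1,
\end{align*}
applied with $a = n-k+1$, $b = k+1$, and combine it with Stirling asymptotics for the Gamma function. A brief case split on the sign of $\gamma$ then yields the bounds, with the saturating indices being the edge values $k\in\{0,n\}$ on one side (where the weight $d^\gamma$ most strongly suppresses or concentrates $Z_{n,k}^\gamma$ near $\partial \Dm$) and the middle index $k\approx n/2$ on the other; either way, both $p_\pm$ can be chosen to depend only on $|\gamma|$. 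This step is technical but routine.

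Ingredient (a) is where the main technical work lies and where I expect the principal obstacle. The inclusion $C^\infty(\Dm) \subset \bigcap_s \wtH^{s,\gamma}(\Dm)$ follows from iterating $\L_\gamma$ on smooth functions and observing that the boundary-singular coefficient $(1-\rho^2)^{-\gamma}$ produced by the top-order term is integrated against the $d^\gamma$-weighted inner product, yielding finite seminorms. The reverse inclusion is subtler: $\L_\gamma$ is only uniformly elliptic in $\Dm^{int}$ and degenerates at $\partial \Dm$, so interior elliptic regularity alone yields $C^\infty$ smoothness in $\Dm^{int}$ but not up to the boundary. To reach the boundary one may view $\L_\gamma$ as a Fuchsian (or $b$-type) operator whose indicial structure and polynomial eigenfunctions $Z_{n,k}^\gamma$ control regularity up to $\partial \Dm$; alternatively, conjugating by an appropriate power of $d$ reduces the degeneracy to a standard boundary value problem for a uniformly elliptic operator, for which boundary regularity is classical. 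This step bundles all the analytic content that distinguishes the present theorem from the $\gamma = 0$ case, where \eqref{eq:funcRel}-type relations with the Laplacian made the analogous identification essentially automatic.
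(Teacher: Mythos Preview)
Your overall plan matches the paper's: both reduce the theorem to (a) the identification $C^\infty(\Dm)=\bigcap_{s}\wtH^{s,\gamma}(\Dm)$ and (b) two-sided polynomial bounds on $\sigma_{n,k}^2$, then conclude tameness of $A$ and $A^{-1}$ on the Zernike--Sobolev scale. For (b) the paper proves the explicit bounds $C_1(n+1)^{\min(-1,-1-\gamma)}\le \sigma_{n,k}^2\le C_2(n+1)^{\max(-1,-1-\gamma)}$, and your Beta-expectation rewriting plus Stirling is essentially the same computation.

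The real divergence is in how you propose to prove the hard inclusion $\bigcap_s\wtH^{s,\gamma}(\Dm)\subset C^\infty(\Dm)$. The paper does \emph{not} use PDE regularity for $\L_\gamma$; instead it proves explicit Sobolev embeddings $\wtH^{s,\gamma}(\Dm)\hookrightarrow C^k(\Dm)$ (for $s$ above an explicit $\gamma$- and $k$-dependent threshold) by direct estimates on the generalized Zernike polynomials: their $L^2_\gamma$-norms, uniform sup-norm bounds, and how they behave under differentiation. This is more elementary than invoking a degenerate-elliptic regularity theory, and it gives quantitative embedding thresholds.

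One correction: your claim that ``conjugating by an appropriate power of $d$ reduces the degeneracy to a standard boundary value problem for a uniformly elliptic operator'' does not hold. Near $\partial\Dm$ the principal part of $\L_\gamma$ has the Kimura-type degeneracy $-(1-\rho^2)\partial_\rho^2$; conjugation by $d^\alpha$ only alters lower-order terms and leaves this vanishing top-order coefficient intact, so you never land on a uniformly elliptic boundary problem. Your alternative suggestion---treating $\L_\gamma$ as a Fuchsian/$b$-type operator and reading off boundary regularity from indicial data---is the correct category (and is indeed the viewpoint developed in \cite{Monard2023}, referenced in the remark after the theorem), but carrying it out is not lighter than the paper's route via Zernike estimates.
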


\begin{proof}[Proof of Theorem \ref{thm:Cinf} (roadmap)] Similarly to the case $\gamma=0$, the statement proved is somewhat more precise: upon proving that $\cap_{s} \wtH^{s,\gamma}(\Dm) = C^\infty(\Dm)$, one may equip $C^\infty(\Dm)$ with the graded family of semi-norms $\{\|\cdot\|_{\wtH^{s,\gamma}},\ s\ge 0\}$, and prove that $I_0^\sharp \mu^{-2\gamma-1} I_0 d^\gamma$ is a tame linear operator with tame inverse. 
	Proving that $\cap_{s} \wtH^{s,\gamma}(\Dm) = C^\infty(\Dm)$ follows from the following continuous inclusions, see \cite[Lemma 16]{Mishra2022}:
	\begin{itemize}
		\item We have $\wtH^{s,\gamma}(\Dm) \hookrightarrow C(\Dm)$ whenever $s>\max (\gamma,\frac{3}{2}\gamma)$. 
		\item For $k\ge 1$, we have $\wtH^{s,\gamma}(\Dm) \hookrightarrow C^k(\Dm)$ whenever $s>2+ \frac{3}{2}\gamma + \frac{5}{2} k$.
	\end{itemize}	
	Then the tameness of $I_0^\sharp \mu^{-2\gamma-1} I_0 d^\gamma$ and its inverse follow directly from the asymptotics of the singular values \eqref{eq:singularValues}, of the form
	\begin{align*}
		C_1 (n+1)^{\min(-1,-1-\gamma)} \le (\sigma_{n,k}^2) \le C_2 (n+1)^{\max(-1,-1-\gamma)}, \qquad n\ge 0, \qquad 0\le k\le n,
	\end{align*}
	for some constants $C_1,C_2$ independent of $n,k$, see \cite[Lemma 12]{Mishra2022}. This implies the following mapping properties: 
	\begin{align}
		\wtH^{s+\max (1,1+\gamma), \gamma}(\Dm) \subset I_0^\sharp \mu^{-2\gamma-1} I_0 d^\gamma(\wtH^{s,\gamma}(\Dm))\subset \wtH^{s+\min (1,1+\gamma), \gamma}(\Dm), \qquad s\ge 0.
		\label{eq:SobolevMapping}
	\end{align}
\end{proof}

All auxiliary results (e.g., SVD, continuous embeddings), make extensive use of knowledge about the generalized disk Zernike polynomials \cite{Wuensche2005}. What is especially needed is their $L^2_\gamma$ norms, (approximate) uniform estimates, how they transform under differentiation, and under $I_0 d^\gamma$. There, representation-theoretic properties inform us on what operators act in a sparse way on these bases. 

\begin{remark} A more in-depth study of the family of degenerate-elliptic operators $\{\L_\gamma,\ \gamma\in \Rm\}$ is undertaken in \cite{Monard2023}, putting the spaces $\wtH^{s,\gamma}(\Dm)$ in a broader perspective, and providing a framework to understand other self-adjoint realizations of $\L_\gamma$ whenever they exist. 	
\end{remark}

\begin{remark}\label{rem:tame}
	In the language of tame Fr\'echet spaces and tame maps: for any $\gamma>-1$, $C^\infty(\Dm)$ may be equipped with the graded countable family of seminorms $\{\|\cdot\|_{\wtH^{s,\gamma}},\ s\in \Nm\}$, giving it a structure of a tame Fr\'echet space, and the operator $I_0^\sharp \mu^{-2\gamma-1} I_0 d^\gamma$ is tame with tame inverse for that specific scale. For inverse problems, this is equivalent to saying that the operator has fixed degree of continuity and ill-posedness. 

	This result in particular shows that, for example when $\gamma=0$, $I_0^\sharp \frac{1}{\mu} I_0$ cannot be made tame-with-tame-inverse when $C^\infty(\Dm)$ is equipped with the classical Sobolev semi-norms $H^k (\Dm)$, so that classical spaces are not adapted to capturing mapping properties of the X-ray transform. Indeed, if this were the case, then the graded Fr\'echet spaces $(C^\infty(\Dm), \{\|\cdot\|_{\wtH^k}\}_{k\in \Nm_0})$ and $(C^\infty(\Dm), \{\|\cdot\|_{H^k}\}_{k\in \Nm_0})$ would be {\em tamely equivalent} in the sense that there would exist fixed integers $\ell, \ell'\in \Zm$ such that $\|\cdot\|_{\wtH^k} \lesssim_k \|\cdot\|_{H^{k+\ell}}$ and $\|\cdot\|_{H^k} \lesssim_k \|\cdot\|_{\wtH^{k+\ell'}}$, independently of $k$. 

	However, this last fact is not true: on the unit disk with boundary defining function $d = 1-\rho^2$, consider the function $f = d^{2p+1/2}$ for any $p\in \Nm_0$. By repeated differentiation, it is easy to see that $f\in \wtH^{2p+2p}(\Dm)$ but $f\notin H^{2p+2}(\Dm)$. This invalidates the possibility of inequalities $\|\cdot\|_{H^{k}}\lesssim \|\cdot\|_{\wtH^{k+\ell}}$ for some $\ell$ independent of $k$.
\end{remark}

\paragraph{Functional relations.} In the case where $\gamma\ne 0$, a more general relation than \eqref{eq:funcRel} exists between the {\em three} operators $I_0^\sharp \mu^{-2\gamma-1} I_0 d^\gamma$, $\L_\gamma$ and $\partial_\omega^2$, a direct consequence of \eqref{eq:singularValues} and the fact that the spectral parameters $n$ and $k$ can be respectively quantized into ${\cal D}_\gamma := \L_\gamma^{1/2}-\gamma-1$ and $({\cal D}_\gamma-\frac{1}{i}D_\omega)/2$, specifically giving
\begin{align}
	I_0^\sharp \mu^{-2\gamma-1} I_0 d^\gamma = 2^{2\gamma+2} \pi (\D_\gamma +1)^{-1} \frac{B((\D_\gamma+D_\omega)/2+1+\gamma, (\D_\gamma-D_\omega)/2+1+\gamma)}{B((\D_\gamma+D_\omega)/2+1, (\D_\gamma-D_\omega)/2+1)}.  
    \label{eq:funcrel}
\end{align}
Such a relation cannot be simplified due to the structure of the spectrum of these three operators, see \cite[Sec. 2.3.2]{Mishra2022}.

\subsection{From the Euclidean disk to constant curvature disks} \label{sec:CCDs}

All the previous results have analogues in the context of simple geodesic disks in constant curvature spaces. A model for such disks can be given by $\Dm_R = \{z\in \Cm,\ |z|^2\le R^2\}$, equipped with the metric $g_\kappa = (1+ \kappa|z|^2)^{-2} |dz|^2$. Then $(\Dm_R, g_\kappa)$ is a simple surface whenever $|\kappa|R^2<1$, of constant curvature $4\kappa$. The results present below are disseminated throughout \cite{Mishra2019,Mishra2022,Monard2019a}. Generally, we can derive the following: 

\begin{theorem}[Theorem 3 in \cite{Mishra2022}]\label{thm:CCD}
	Fix $\kappa\in \Rm$ and $R>0$ such that $|\kappa|R^2<1$. Let $I_0, I_0^\sharp$ the geodesic X-ray transform and backprojection operator associated with $(\Dm_R,g_\kappa)$. Then there exists $d\in C^\infty(\Dm_R)$ a boundary defining function for $\Dm_R$, and $\bt$, an $\alpha$-boundary defining function\footnote{See Sec. \ref{sec:phgonmwb} for a definition} for $\partial_+ S_{(\kappa)}\Dm_R$ such that for every $\gamma>-1$, the transform $I_0^\sharp \bt^{-2\gamma-1} I_0 d^\gamma$ is an isomorphism of $C^\infty(\Dm_R)$.
\end{theorem}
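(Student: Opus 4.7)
The plan is to mirror the roadmap of Theorem \ref{thm:Cinf} step by step, replacing each Euclidean ingredient with its constant-curvature analogue. The crucial simplification is that $(\Dm_R, g_\kappa)$, like the Euclidean disk, enjoys a rotational isometry (in fact a transitive Möbius-type group action), which once more allows for separation of variables in polar-type coordinates. I would first identify the appropriate boundary defining function $d$ on $\Dm_R$, for instance $d = R^2 - |z|^2$ or a smooth non-vanishing multiple thereof chosen so that $\sqrt{d}$ has the correct scaling under the SVD to come; and an $\alpha$-boundary defining function $\bt$ on $\partial_+ S_{(\kappa)}\Dm_R$ (made precise in Section \ref{sec:phgonmwb}) generalising the role played by $\mu = \cos\alpha$ in the Euclidean case but compatible with the scattering relation of $g_\kappa$.

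Next, I would construct an explicit SVD of $I_0 d^\gamma$. Combining rotational invariance with the conformal flatness of $g_\kappa$, one can write down an orthogonal basis $\{Z_{n,k}^{\gamma,\kappa}\}_{0\le k\le n}$ of $L^2(\Dm_R, d^\gamma d\mathrm{vol}_{g_\kappa})$ and a corresponding basis $\{\psi_{n,k}^{\gamma,\kappa}\}_{n\ge 0, k\in \Zm}$ on $\partial_+ S_{(\kappa)}\Dm_R$ such that $I_0^\sharp \bt^{-2\gamma-1} \psi_{n,k}^{\gamma,\kappa} = \sigma_{n,k}^{\gamma,\kappa}\, Z_{n,k}^{\gamma,\kappa}$ for $0\le k\le n$ and zero otherwise. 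The angular Fourier mode $e^{i(n-2k)\omega}$ decouples, reducing the construction to a one-parameter family of radial ODE eigenvalue problems of hypergeometric type, which recover the generalised Zernike polynomials at $\kappa=0$.

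One would then introduce a degenerate elliptic operator $\L_{\gamma,\kappa}$ diagonalised by the $Z_{n,k}^{\gamma,\kappa}$ with eigenvalues comparable to $(n+1+\gamma)^2$, and define the Sobolev scale $\wtH^{s,\gamma,\kappa}(\Dm_R) := \mathrm{dom}(\L_{\gamma,\kappa}^{s/2})$. The identification $\bigcap_{s\ge 0} \wtH^{s,\gamma,\kappa}(\Dm_R) = C^\infty(\Dm_R)$ is then established by continuous embeddings $\wtH^{s,\gamma,\kappa}(\Dm_R)\hookrightarrow C^k(\Dm_R)$ analogous to those used in \cite[Lemma 16]{Mishra2022}; these in turn rest on pointwise size bounds for $Z_{n,k}^{\gamma,\kappa}$ and on the fact that $\L_{\gamma,\kappa}$ has the same degenerate-elliptic structure at $\partial \Dm_R$ as $\L_\gamma$, so that smoothness can be read off coefficient decay. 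Tameness of $I_0^\sharp \bt^{-2\gamma-1} I_0 d^\gamma$ and its inverse then reduces, exactly as in the proof of Theorem \ref{thm:Cinf}, to two-sided polynomial bounds
\begin{align*}
    C_1 (n+1)^{\min(-1,-1-\gamma)} \le (\sigma_{n,k}^{\gamma,\kappa})^2 \le C_2 (n+1)^{\max(-1,-1-\gamma)}, \qquad 0\le k\le n,
\end{align*}
uniform in $n,k$, which together with \eqref{eq:SobolevMapping}'s curved analogue delivers the claimed isomorphism of $C^\infty(\Dm_R)$.

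The principal obstacle is the uniform-in-$k$ control of $\sigma_{n,k}^{\gamma,\kappa}$. In the Euclidean case of Theorem \ref{thm:Cinf} the singular values had the clean closed form \eqref{eq:singularValues} in terms of Beta functions, whose asymptotics are easily extracted; in the curved case the analogous quantities will be ratios of ${}_2F_1$ values whose closed form is less clean, and one must be careful that the bounds do \emph{not} degrade in the angular index $k$. A secondary obstacle, partially addressed elsewhere in the survey, is the correct choice of $\bt$ near the glancing set $\partial_0 S\Dm_R$: too naive a choice (e.g.\ $\mu$ itself) would fail to yield the right co-kernel structure for $I_0^\sharp \bt^{-2\gamma-1}$ when $\kappa\ne 0$, and one needs the $\alpha$-boundary defining function formalism of Section \ref{sec:phgonmwb} to make the intertwining $I_0^\sharp \bt^{-2\gamma-1}\circ (\text{angular operator}) = \L_{\gamma,\kappa}\circ I_0^\sharp \bt^{-2\gamma-1}$ close up.
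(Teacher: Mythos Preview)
Your proposal outlines a plausible direct-attack strategy, but it misses the single key idea that the paper actually uses and that collapses all of your obstacles at once: \emph{projective equivalence}. The paper does not rebuild an SVD, special functions, or a degenerate-elliptic operator from scratch on $(\Dm_R,g_\kappa)$. Instead it observes that for each $(R,\kappa)$ with $|\kappa|R^2<1$ there is a diffeomorphism $\Phi_{\kappa,R}\colon \Dm_R\to\Dm$ carrying $g_\kappa$-geodesics to Euclidean geodesics up to reparameterization. This induces explicit intertwining diffeomorphisms between the X-ray transforms (and backprojections) of the two models, so the weighted normal operator on $(\Dm_R,g_\kappa)$ is conjugate, via pullback by smooth diffeomorphisms and multiplication by smooth nonvanishing Jacobian factors, to the Euclidean operator already handled in Theorem~\ref{thm:Cinf}. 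The correct $d$ and $\bt$ are simply the pushforwards of the Euclidean choices under these diffeomorphisms, and the $C^\infty$-isomorphism property transfers immediately.

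In particular, the two obstacles you flag dissolve. You never face ratios of ${}_2F_1$ values: the singular values on the curved disk \emph{are} the Euclidean $\sigma_{n,k}^\gamma$ of \eqref{eq:singularValues}, transported by the intertwiners, so the Beta-function asymptotics from \cite[Lemma~12]{Mishra2022} apply verbatim. Likewise the choice of $\bt$ near glancing is not a separate analytic problem: it is dictated by pulling back $\mu$ through the induced map on $\partial_+ SM$, which automatically produces an $\alpha$-bdf compatible with the curved scattering relation. Your from-scratch program might be carried out, but you have correctly identified that the uniform-in-$k$ spectral bounds would be the hard step; the paper's route via projective equivalence renders that step unnecessary.
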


The main concept to exploit here is the {\em projective equivalence} between $(\Dm, g_0)$ and $(\Dm_R, g_\kappa)$, i..e. that for any $(R,\kappa)$ with $|\kappa|R^2<1$, there exists a diffeomorphism $\Phi_{\kappa,R}\colon \Dm_R\to \Dm$ such that the geodesics of $\Dm$ when equipped to either $g_0$ or $(\Phi_{\kappa,R}^{-1})^* g_\kappa$, are reparameterizations of one another. Such a fact induces intertwining diffeomorphisms between their X-ray transforms, and by further intertwining with the differential operators on $\Dm$ and $\partial_+ S\Dm$, this allows to find intertwining differential operators on $\Dm_R$ and $\partial_+ S_{(\kappa)}\Dm_R$, which ultimately drive Sobolev, or $C^\infty$-isomorphism mapping properties of the associated X-ray transform. 

In addition when $\gamma=0$, more is given in \cite[Theorem 1, Corollary 4]{Monard2019a}: for any such model (loosely written $(M,g)$ below), one may show that there exist a second-order degenerate elliptic operator $\L$ on $M$, a vector field $T$ on $\partial_+ SM$ and a non-vanishing weight function $w\in C^\infty(M)$ such that
\begin{align*}
	I_0 w \circ \L = (-T^2) \circ I_0 w \quad \text{ and }\quad \L (I_0^* I_0)^2 = Id \qquad \text{on } \quad C^\infty(M).
\end{align*}
Moreover, the Hilbert scales $\{\wtH^s(\Dm)\}_{s\in \Rm}$ and $\{H_{T,+}^s (\partial_+ S\Dm)\}_{s\in \Rm}$ defined in \eqref{eq:SobolevZernike} and \eqref{eq:scaledata}, and the operators $P_-,C_-$ all have analogues on $M$ and $\partial_+ SM$, that allow for range descriptions as sharp as \eqref{eq:rangecharac}.

Although not written in the literature, it is expected from the techniques in \cite{Mishra2022,Monard2019a} that for any $\gamma>-1$, there also exist second-order operators $\L_\gamma$ and ${\cal T}_\gamma$ on $M$ and $\partial_+ SM$, and a boundary defining function $d$ such that the intertwining property $I_0 d^\gamma \circ \L_\gamma = {\cal T}_\gamma \circ I_0 d^\gamma$ holds on $C^\infty(M)$.

\section{Forward estimates for $I_0$ on convex, non-trapping manifolds} \label{sec:forward_Xray}

While the previous section worked in geometries with symmetries, where Singular Value Decomposition and special differential operators helped give quite a precise story, this section discuss results on the forward mapping properties of $I_0$ on convex, non-trapping manifolds, using more flexible toolboxes. We first recall in Sec. \ref{sec:cvx} the important features of convex, non-trapping manifolds, in particular the prominent role of the time-rescaling map (see Eq. \eqref{eq:upsilon}) for establishing the mapping properties that follow. We then discuss mapping properties of $I_0$, first on weighted $C^\infty$ spaces in Sec. \ref{sec:Cinftype}, then generalized to polyhomogeneous conormal spaces in Sec. \ref{sec:I0phg}. Finally, in Section \ref{sec:Honehalf}, we discuss instances in the literature of $H^{1/2}$-type spaces constructed on $\partial_+ SM$ for the purpose of capture mapping properties and stability estimates for $I_0$.

\subsection{Convex and non-trapping manifolds: important features}\label{sec:cvx}

We briefly discuss important functions and features that arise from the non-trapping and convexity conditions, see e.g. \cite[Sec. 3.1-3.3]{Paternain2023} and \cite[Sec. 6]{Monard2021} for detail. Define the first exit time function 
\begin{align}
	\tau(x,v) := \tau_+ (x,v),
	\label{eq:tau}
\end{align}
where the interval $[\tau_-(x,v), \tau_+ (x,v)]$ is defined as the {\em maximal} interval of existence of the geodesic $\varphi_t(x,v)$ in $SM$. We say that $(M,g)$ is {\em non-trapping} if $\sup_{SM} \tau(x,v) <\infty$. 

With $\nu_\x$ the inward-pointing unit normal at $\x\in \partial M$, we denote the second fundamental form 
\begin{align}
	\two_x(v,w) := - \langle\nabla_v \nu_\x, w\rangle_g, \qquad \x\in \partial M, \quad v,w\in T_\x \partial M.
	\label{eq:two}
\end{align}
We say that $\partial M$ is strictly (geodesically) convex if $\two_x$ is positive definite for all $\x\in \partial M$. An important consequence of convexity is that 
\begin{itemize}
	\item the function $\tau$ is continuous on $SM$ and smooth on $SM\backslash \partial_0 SM$, see \cite[Lemma 3.2.3]{Paternain2023}.
	\item the function $\tilde\tau(x,v) := \tau(x,v) - \tau(x,-v)$ is smooth on $SM$, see \cite[Lemma 3.2.11]{Paternain2023}. 
\end{itemize}
Along with the smoothness properties of the geodesic flow, this makes the footpoint map $F(x,v) := \varphi_{-\tau(\x,-v)}(\x,v)$ smooth on $(SM)^{int}$.

An important map first introduced in \cite[Sec. 6.3.1]{Monard2021} is the {\em time-rescaling map} $\Upsilon \colon \partial_+ SM \times [0,1]\to SM$ given by 
\begin{align}
	\Upsilon(x,v,u) := \varphi_{u\tau(x,v)}(x,v), \quad (x,v)\in \partial_+ SM,\quad u\in [0,1].
	\label{eq:upsilon}
\end{align}
It is a diffeomorphism on the interior, while it enjoys the following ``b-map'' property: if $d\in C^\infty(M)$ is a function equal to $d(\x) = \text{dist}(\x,\partial M)$ near $\partial M$ and non-vanishing on the interior, there exists a non-vanishing function $F\in C^\infty(\partial_+ SM\times [0,1], \Rm)$, such that 
\begin{align}
	d(\pi\circ \varphi_t(x,v)) = \tau^2(x,v) u(1-u) F(x,v,u).
	\label{eq:bmap}
\end{align}
In what follows, this function, via identity \eqref{eq:bmap}, will be crucial in understanding how boundary behavior transforms under application of $I_0$ or $I_0^\sharp$.

\subsection{Mapping properties on weighted $C^\infty$ type spaces}\label{sec:Cinftype}

On a convex, non-trapping Riemannian manifold $(M,g)$, the forward mapping properties of many variants of the X-ray transform (e.g. with attenuation) can be derived from the following general result. With $d$ a boundary defining function for $M$ (and hence for $SM$ as well, if we abuse notation $d$ for $d\circ \pi$). One may generally define the X-ray transform of $f\in d^{\gamma}C^\infty(SM)$ as  
\begin{align}
	I f(x,v) := \int_0^{\tau(x,v)} f(\varphi_t(x,v))\ dt, \qquad (x,v)\in \partial_+SM,
	\label{eq:I}
\end{align}
so that the definition of $I_0$ in \eqref{eq:Xray} is nothing but $I_0 = I\circ \pi^*$. 
Then we have the following forward mapping property (written in \cite[Proposition 6.13]{Monard2021} for weighted X-ray transforms there): 

\begin{proposition}[Proposition 6.13 in \cite{Monard2021}]\label{prop:fwdweightedXray}
	Let $(M,g)$ be a convex and non-trapping Riemannian manifold with boundary. Then for every $\gamma>-1$, the following mapping property holds
	\begin{align}
		I \colon d^{\gamma} C^\infty(SM) \to \tau^{2\gamma+1} C_{\alpha,+}^\infty(\partial_+ SM).
		\label{eq:fwdI}
	\end{align}
\end{proposition}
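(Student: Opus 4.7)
The plan is to reduce the X-ray integral over $[0,\tau(x,v)]$ to a fixed interval via the time-rescaling map $\Upsilon$ from \eqref{eq:upsilon}, and then to use the b-map identity \eqref{eq:bmap} to extract the boundary decay of the integrand as an explicit prefactor $\tau^{2\gamma+1}$. Concretely, substituting $t = u\tau(x,v)$ in \eqref{eq:I} gives
\begin{align*}
If(x,v) = \tau(x,v)\int_0^1 f(\Upsilon(x,v,u))\,du,
\end{align*}
and specializing to $f = d^\gamma g$ with $g \in C^\infty(SM)$, combined with $d\circ\pi\circ\Upsilon = \tau^2\,u(1-u)\,\Phi$ (where I rename the smooth positive function of \eqref{eq:bmap} as $\Phi$ to avoid a clash with the footpoint map $F$), yields
\begin{align*}
If(x,v) = \tau(x,v)^{2\gamma+1}\, h(x,v), \qquad h(x,v) := \int_0^1 u^\gamma(1-u)^\gamma\, \Phi(x,v,u)^\gamma\, g(\Upsilon(x,v,u))\,du.
\end{align*}
The hypothesis $\gamma>-1$ enters precisely so that this Beta-type integral converges.

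Smoothness of $h$ on $\partial_+ SM$ is then essentially routine: $\Phi$, $\Upsilon$, and $g$ are all smooth in $(x,v,u)\in\partial_+ SM\times[0,1]$, and the only singular factor $u^\gamma(1-u)^\gamma$ is integrable, so differentiation under the integral sign in $(x,v)$ is justified. To upgrade this to $h \in C_{\alpha,+}^\infty(\partial_+ SM)$, I would invoke the Pestov--Uhlmann characterization \eqref{eq:PUcharac} and instead show that $h\circ F \in C^\infty(SM)$. Composing, one obtains the same integral formula with $(x,v)$ replaced by $F(y,w)$, so the task reduces to proving smoothness on $SM\times[0,1]$ of the composed maps $\widetilde\Upsilon(y,w,u) := \Upsilon(F(y,w),u)$ --- which is geometrically just the smooth parametrization of the full geodesic through $(y,w)$ by relative position $u\in[0,1]$ with $u=0$ at the entry and $u=1$ at the exit --- and $(y,w,u)\mapsto \Phi(F(y,w),u)$. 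With these in hand, differentiation under the integral sign in $(y,w)$ concludes $h\circ F \in C^\infty(SM)$.

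The main obstacle I anticipate lies precisely in this smoothness across the glancing set $\partial_0 SM$, since the footpoint map $F$ and the exit time $\tau$ individually fail to be smooth there. The natural route is to express $\widetilde\Upsilon$ and $\Phi\circ(F\times\mathrm{id})$ entirely in terms of quantities that are known to be smooth on all of $SM$ --- in particular $\tilde\tau$ from Section~\ref{sec:cvx}, together with the product $\tau(\cdot)\,\tau(\cdot,-\cdot)$ whose smoothness can be read off from a convex extension of $(M,g)$ --- thereby encoding the fold of $F$ in smooth data. Equivalently, one may exploit the manifest $u\leftrightarrow 1-u$ symmetry of the full-geodesic parametrization, which corresponds to reversing orientation via the scattering relation $\alpha$ and which is exactly what renders the $A_+$-extension smooth across $\partial_0 SM$. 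Once this smoothness lemma is established, the chain above delivers $h\in C_{\alpha,+}^\infty(\partial_+ SM)$ and hence $If \in \tau^{2\gamma+1}\,C_{\alpha,+}^\infty(\partial_+ SM)$, as claimed.
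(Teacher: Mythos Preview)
Your proposal is correct and follows essentially the same approach as the paper's sketch: rescale via $\Upsilon$, extract $\tau^{2\gamma+1}$ from the b-map identity \eqref{eq:bmap}, and then show the remaining integral lies in $C_{\alpha,+}^\infty(\partial_+ SM)$. The paper's sketch simply says this last step uses ``extensibility and symmetry properties of the functions $F$ and $\Upsilon$,'' which is exactly what you propose to unpack via the $u\leftrightarrow 1-u$ symmetry and the smooth quantities $\tilde\tau$ and $\tau(\cdot)\tau(\cdot,-\cdot)$ on $SM$.
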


\begin{proof}[Proof of Proposition \ref{prop:fwdweightedXray} (sketch)]
	The proof follows from the properties of the time-rescaling map $\Upsilon$ \eqref{eq:upsilon}, since for $\gamma>-1$ and $f = d^\gamma h$ for some $h\in C^\infty(SM)$, one may write
	\begin{align*}
		If (x,v) = \int_0^{\tau(x,v)} f(\varphi_t(x,v))\ dt &\stackrel{(t = u\tau)}{=} \tau \int_0^1 (d(\Upsilon))^\gamma h(\Upsilon(x,v,u))\ du \\
		&\,\,\,\stackrel{\eqref{eq:bmap}}{=} \tau^{2\gamma+1} \int_0^1 h(\Upsilon(x,v,u)) F(x,v,u)^\gamma (u(1-u))^\gamma\ du. 
	\end{align*}
	One then concludes, using extensibility and symmetry properties of the functions $F$ and $\Upsilon$, by showing that the last integral defines a function in $C_{\alpha,+}^\infty(\partial_+ SM)$. 
\end{proof}

\subsection{On p(oly)h(omo)g(eneous) conormal spaces} \label{sec:I0phg}

The examples of spaces $d^\gamma C^\infty(M)$ and $\tau^{s} C^\infty_\alpha (\partial_+ SM)$ can be made more general. In \cite{Mazzeo2021}, the author and Mazzeo provide a more general framework where to formulate forward mapping properties of $I_0$ and $I_0^\sharp$ on convex, non-trapping manifolds. In order to discuss these results, we first need to recall some generalities on polyhomogeneous (phg) conormal spaces.

\subsubsection{Preliminaries: phg conormal functions on manifolds with boundary} \label{sec:phgonmwb}

Given $N^n$ a smooth manifold with boundary $\partial N$, we say that $x \in C^\infty(N, [0,\infty))$ is a {\bf boundary defining function} for $N$ if $\partial N = x^{-1} (0)$ and $dx \ne 0$ at each $p\in \partial N$. We define the Lie algebra of b-vector fields, $\V_b(N)$, to be the space of smooth vector fields which are unconstrained on the interior, and which are tangent to $\partial N$. Locally near a boundary point, where one may use local coordinates $(x, y_1, \dots, y_{n-1})$ with $x$ boundary defining and $(y_1,\dots,y_{n-1})$ coordinates on $\partial N$, we have 
    \begin{align}
	\V_b(N) = C^\infty - \text{span} \{x\partial_x, \partial_{y_1}, \dots, \partial_{y_{n-1}}\},
	\label{eq:Vb}
    \end{align}

    We now discuss a way of prescribing boundary behavior, for functions that are smooth in $N^{int}$, with which the mapping properties of $I_0$ and $I_0^\sharp$ can be described. Indeed, although $C^\infty(N)$ is the most obvious class of 'nice' functions, the examples from the introduction also suggest other spaces, such as $x^{-1/2} C^\infty(N)$. More generally, we want to define a class of Fr\'echet spaces with classical asymptotic expansions off of $\partial N$, involving countably many terms of the form $x^z \log^k x$, where $z\in \Cm$ should have real part bounded below, and $k\in \Nm_0$.  

Such spaces must first be thought of as subspaces of the following spaces of conormal functions, or functions that that enjoy iterated b-regularity in the sense below. Given $s\in \Rm$, we define
\begin{align}
    \A^s(N) = \left\{u\in C^\infty(N^{int}):\ V_1\cdots V_k u \in x^s L^\infty(N), \text{ for all } k\in \Nm_0 \text{ and } V_1, \dots, V_k \in \V_b\right\}.
    \label{eq:As}
\end{align}
Such spaces still display too large a wealth of boundary behavior (see discussion in \cite[Sec. 2.1]{Mazzeo2021}), and we will pick subspaces of the form \eqref{eq:Aphg} below, as follows. 

\begin{definition}[Index set]\label{def:idxset}
	A {\bf $C^\infty$-index set} $E$ is a subset of $\Cm\times \Nm_0$ such that 
	\begin{enumerate}
		\item[(a)] For every $s\in \Rm$ the set $E_{\le s} = \{(s,k)\in E\colon \text{Re}(z)\le s\}$ is finite. 
		\item[(b)] If $(z,k)\in E$, then $(z,\ell)\in E$ for every $\ell= 0, 1, \dots, k-1$. 
		\item[(c)] If $(z,k)\in E$, then $(z+1,k)\in E$. 
	\end{enumerate}	
\end{definition}
Condition (a) implies the existence of a ``smallest'' $s$ (denoted $\inf E$) such that $\text{Re}(z) \ge s$ for all $(z,k)\in E$. Now given an index set $E$ and $s\le \inf E$, we can define 
\begin{align}
    \A_{\mathrm{phg}}^E (N) := \left\{ u\in \A^s(N)\colon u \sim \sum_{(z,k)\in E} u_{z,k}(y) x^z (\log x)^k, \quad u_{z,k}\in C^\infty(N) \right\}. 
    \label{eq:Aphg}
\end{align}

As examples, $C^\infty(N)$ corresponds to $E = \Nm_0\times \{0\}$, while $x^{-1/2} C^\infty(N)$ corresponds to $E = (\Nm_0-1/2) \times \{0\}$. One may see that the conditions (a), (b), (c) make the definition of $\A_{\mathrm{phg}}^E (N)$ invariant under change of boundary defining function and multiplication by a function in $C^\infty(N)$. 

\paragraph{Particularizing to a Riemannian manifold and its inward-pointing boundary.} 

On a Riemannian manifold with boundary $(M,g)$, a good choice of boundary defining function is to extend the function $\x\mapsto d_g(\x, \partial M)$ (usually well-defined and smooth on a collar neighborhood of $\partial M$) into a globally well-defined and smooth function on $M$, and all other definitions essentially follow \eqref{eq:As} and \eqref{eq:Aphg}.

On to the manifold $\partial_+ SM$, this manifold carries a smooth and geometrically relevant involution, the {\em antipodal scattering relation} 
\begin{align*}
    \alpha_a(x,v) := \varphi_{\tau(x,-v)} (x,-v), \qquad (x,v)\in \partial_{+} SM,
\end{align*}
i.e., the composition of the scattering relation and the antipodal map. As the range of the X-ray transforms is only made of functions with are even with respect to $\alpha_a$, accurately capturing the mapping properties that follow requires to define index sets and boundary defining functions which capture symmetries with respect to $\alpha$ and $\alpha_a$. To that end, following \cite[Sec. 4.3]{Mazzeo2021}, we will say that a boundary defining function $\bt$ on $\partial_+ SM$ is an {\bf $\alpha$-boundary defining function} ($\alpha$-bdf, for short) if $\bt\in C_{\alpha,-}^\infty(\partial_+ SM)$. If $(M,g)$ has convex boundary, then three good examples of $\alpha$-bdf's are given by: (i) the function $\tau(x,v)$ on $\partial_+ SM$; (ii) the $\partial M$-geodesic distance from $x\in \partial M$ to $\pi(\alpha(x,v))$ (see \cite[Sec. 4.1]{Mazzeo2021}); (iii) the function $\mu(x,v) + \mu(\alpha_a(x,v))$. 

Further, we define a {\bf $C_\alpha^\infty$-index set} on $\partial_+ SM$ to be a set $E\subset \Cm\times \Nm_0$ as in Def. \ref{def:idxset}, with (c) replaced by 
\begin{enumerate}
    \item[(c')] If $(z,k)\in E$, then $(z+2,k)\in E$.
\end{enumerate}
Then for $E$ a $C_\alpha^\infty$-index set, one may define $\A_{\mathrm{phg}}^E (\partial_+ SM)$ as in \eqref{eq:Aphg}, and such a space is invariant under change of $\alpha$-bdf and multiplication by a function in $C_{\alpha,+}^\infty(\partial_+ SM)$. Examples of such spaces are $C_{\alpha,+}^\infty(\partial_+ SM)$ (when $E = 2\Nm_0\times \{0\}$) and $C_{\alpha,-}^\infty(\partial_+ SM)$ (when $E = (2\Nm_0+1)\times \{0\}$), defined in \eqref{eq:Calphapm}.

\paragraph{Coordinate systems near the boundaries of $M$ and $\partial_+ SM$.}

To carry out the analysis below, we also need coordinate systems amenable to explicit computations for $M$ and $\partial_+ SM$ near their respective boundaries. On $M$, we use traditional Riemannian boundary normal coordinates $(\rho,y)\in [0,\varepsilon]\times \partial M$, where $\rho$ denotes the $g$-distance to the boundary. On $\partial_+ SM$, we use $\tau$ as $\alpha$-bdf for $\partial_0 SM$, completed with a $\partial_0 SM$-valued, $(2d-2)$-dimensional coordinate $\omega$ which, if $(\x,v)\in \partial_+ SM$ is near $\partial_0 SM$, $\omega$ denotes the geodesic midpoint and tangent vector in $S(\partial M) \approx \partial_0 SM$ of the $g|_{\partial M}$-geodesic joining $\x$ to $\pi (\alpha(\x,v))$. That $\omega$ is well-defined is done in \cite[Lemma 4.1]{Mazzeo2021}. What makes the coordinates $(\tau,\omega)$ desireable is that they have good symmetries w.r.t. the scattering relation: $\tau\in C_{\alpha,-}^\infty(\partial_+ SM)$ while $\omega\in C^\infty_{\alpha,+}(\partial_+ SM, \partial_0 SM)$, so that $C_{\alpha}^\infty$-expansions of phg functions near $\partial_0 SM$ can be written as asymptotic sums involving terms of the form $\tau^z (\log\tau)^k a_{z,k}(\omega)$, where $(z,k)\in \Cm\times \Nm_0$ and $a_{z,k}\in C^\infty(\partial_0 SM)$. Moreover, they are geometrically relevant and as such, they enable the computation of volume factors in terms of intrinsic geometric quantities (see \cite[Sec. 5.5]{Mazzeo2021}).

\begin{remark}
	The two-dimensional case can be refined slightly: on simple surfaces, the construction above can be upgraded into a global chart $(w,p)$ of $\partial_+ SM$, see \cite[Sec. 3.3.2]{Monard2021a}. There, such coordinates give an analogue of parallel-beam coordinates in the Euclidean setting, which are relevant when tackling practical questions such as sampling: given that $\partial_+ SM$ is a 2-1 covering of the space of unoriented geodesics where $I_0$ is defined, discrete uniform sampling rates in $(w,p)$ will be equally constrained at both representative of each geodesic, a feature which does not occur for fan-beam coordinates.
\end{remark}

\subsubsection{Mapping properties of $I_0$ on phg spaces}

The refined mapping properties of $I_0$ (or more generally, $I$ defined in \eqref{eq:I}) on phg-conormal spaces is formulated in \cite{Mazzeo2021}. In words: the X-ray transform maps phg-conormal spaces on $M$ to other phg-conormal spaces on $\partial_+ SM$, one can tell how expansions off of $\partial M$ are transformed into expansions off of $\partial_0 SM$, with an explicit expression for the most singular (intrinsic) coefficients in some coordinate systems. 

\begin{theorem}[Theorem 2.1 in \cite{Mazzeo2021}] \label{thm:Calphamapping}
Let $(M,g)$ be convex, non-trapping, and let $\phi\in C^\infty(SM)$. Suppose $f \in \A^E_{\phg}(SM)$ for some index set $E$ with 
$s = \inf(E)>-1$. Then $I^\phi f \in  \A^F_{\phg}(\partial_+ SM)$, where $F$ is a $C^\infty_\alpha$-index set contained in 
(and possibly equal to) $\{ (2z+1,\ell), \ (z,\ell)\in E \}$.

More precisely, fix $\gamma\in \Cm$, $k\in \Nm_0$, $h\in C^\infty(\partial SM)$ and $\chi\in C_c^\infty([0,\infty))$ a cutoff function equal to $1$ near $0$.
Using coordinates $(\rho,\xi)\in [0,\varepsilon)\times \partial SM$ on $SM$ near $\partial SM$ and $(\tau,\omega)$ on $\partial_+ SM$ near 
$\partial_0 SM$ with $\omega = (y,w)\in \partial_0 SM$, as defined in the previous section, and denoting by $\tau$ the geodesic length, we have 
\begin{align*}
I^\phi [\rho^\gamma \log^k \rho\ h(\xi)] (\tau,\omega) \sim  \tau^{2\gamma+1} \sum_{p\ge 0 \atop 0 \le \ell \le k} \tau^{2p} (\log\tau)^\ell 
a_{2\gamma+1+p,\ell}(\omega).
\end{align*}
If $\rho(x) = d_g(x,\partial M)$ near $\partial M$, the coefficient in front of the most singular term equals 
    \begin{align}
	a_{2\gamma+1,k}(\omega) = 2^k h(\omega) \phi(\omega) \two(\omega)^\gamma\ B(\gamma+1,\gamma+1),
	\label{eq:I0BottomTerm}
    \end{align}
where $\two(\omega) := \two_y(w,w)$ is the second fundamental form, and $B$ is the Beta function.
\end{theorem}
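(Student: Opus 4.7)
The plan is to reduce the statement to a single-monomial computation for $f = \rho^\gamma(\log\rho)^k h(\xi)$ via the time-rescaling map $\Upsilon$ and the b-map identity \eqref{eq:bmap}, and then to reassemble the polyhomogeneous expansion by linearity plus a tail estimate. The new content beyond Proposition \ref{prop:fwdweightedXray} is the tracking of logarithmic factors, the explicit identification of the leading coefficient via a Gauss-type Taylor expansion at glancing, and the parity control of the $\tau$-expansion.

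For the monomial case, substituting $t = u\tau(x,v)$ in \eqref{eq:I} and using \eqref{eq:bmap} together with the identity $\log\rho(\Upsilon) = 2\log\tau + \log(u(1-u)F)$ yields
\begin{align*}
I^\phi f(x,v) = \tau^{2\gamma+1}\int_0^1 (u(1-u))^\gamma F^\gamma \bigl[\,2\log\tau + \log(u(1-u)F)\,\bigr]^k (\phi h)(\Upsilon)\,du,
\end{align*}
after which a binomial expansion of the bracket isolates the powers of $\log\tau$:
\begin{align*}
I^\phi f(x,v) = \tau^{2\gamma+1}\sum_{\ell=0}^k \binom{k}{\ell}(2\log\tau)^\ell\, J_\ell(x,v),
\end{align*}
with $J_\ell := \int_0^1 (u(1-u))^\gamma F^\gamma [\log(u(1-u)F)]^{k-\ell}(\phi h)(\Upsilon)\,du$.

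The next step is to show each $J_\ell$ belongs to $C^\infty_{\alpha,+}(\partial_+SM)$ with an asymptotic expansion $J_\ell \sim \sum_{p\ge 0}\tau^{2p}a^{(\ell)}_p(\omega)$. In the coordinates $(\tau,\omega)$ of Section \ref{sec:phgonmwb}, $F$ is smooth and non-vanishing on $\partial_+SM\times[0,1]$ and $\Upsilon$ extends smoothly across $\tau=0$; combined with $\gamma>-1$ ensuring uniform integrability of $(u(1-u))^\gamma$ and its logarithms on $[0,1]$, differentiating under the integral yields smoothness of $J_\ell$ on $\partial_+SM$ and a full Taylor expansion in $\tau$. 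Evenness of the expansion is forced by the chord-reversal $u\leftrightarrow 1-u$ symmetry of the integrand together with the $\alpha$-symmetries of the coordinate system ($\tau\in C^\infty_{\alpha,-}$ is an $\alpha$-bdf and $\omega\in C^\infty_{\alpha,+}$). The leading coefficient ($\ell=k$, $p=0$) is then read off at $\tau=0$: $J_k|_{\tau=0} = (\phi h)(\omega)\int_0^1(u(1-u))^\gamma F(\omega,u)^\gamma|_{\tau=0}\,du$. Matching the Taylor expansion $d(\pi\varphi_t(x,v)) = t\mu + \tfrac{t^2}{2}\,\mathrm{Hess}\,d(v,v)+O(t^3)$ against $\tau^2u(1-u)F$, and using $\mathrm{Hess}\,d(w,w) = -\two(w,w)$ for $w$ tangent to $\partial M$ together with $\mu\sim \tfrac{\tau}{2}\two(\omega)$ at glancing, shows that $F|_{\tau=0}$ is $u$-independent and an explicit positive multiple of $\two(\omega)$, whence the remaining $u$-integral collapses to $B(\gamma+1,\gamma+1)$ and \eqref{eq:I0BottomTerm} follows.

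The polyhomogeneous case then follows by asymptotic summation. Given $f\sim\sum_{(z,k)\in E}f_{z,k}(\xi)\rho^z(\log\rho)^k$ with $\inf E>-1$, the monomial computation shows that each term contributes to indices $\{(2z+1+2p,\ell):0\le\ell\le k,\ p\in\Nm_0\}$, whose $C^\infty_\alpha$-closure under conditions (a)--(c$'$) of Definition \ref{def:idxset} is contained in the asserted image set $\{(2z+1,\ell):(z,\ell)\in E\}$. For any $N$, Proposition \ref{prop:fwdweightedXray} maps the tail $f-\sum_{\mathrm{Re}\,z<N}f_{z,k}\,\rho^z(\log\rho)^k \in \rho^N C^\infty(SM)$ into $\tau^{2N+1}C^\infty_{\alpha,+}(\partial_+SM)$, closing the argument. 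The main technical hurdle is the parity claim in the previous step: carefully tracking the $\alpha$-symmetry through the change of variables $t=u\tau$ and extending $\Upsilon$ and $F$ across $\partial_0SM\times[0,1]$ with the right evenness demands the glancing-chart construction of \cite{Mazzeo2021}, where the facts $\omega\in C^\infty_{\alpha,+}$ and $\tau\in C^\infty_{\alpha,-}$ do most of the bookkeeping.
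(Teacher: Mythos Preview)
Your proposal is correct and follows essentially the same approach as the paper's proof sketch: both hinge on the time-rescaling substitution $t=u\tau$ and the b-map identity \eqref{eq:bmap} to factor out $\tau^{2\gamma+1}$, then show the remaining integral lies in $C^\infty_{\alpha,+}(\partial_+ SM)$ and evaluate it at $\tau=0$. Your binomial expansion of $(2\log\tau+\log(u(1-u)F))^k$ to isolate the $(\log\tau)^\ell$ coefficients is exactly the natural way to handle the logarithmic terms that the paper's sketch defers to \cite{Mazzeo2021}, and your Taylor-matching computation of $F|_{\tau=0}$ (via $\mathrm{Hess}\,d(w,w)=-\two(w,w)$ and $\mu\sim\tfrac{\tau}{2}\two(\omega)$) supplies the detail behind the paper's stated limit $b_0(\omega;z)=a(\pi(\omega))B(z+1,z+1)\two(\omega)^z$.

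One small caution: your Taylor-matching argument actually yields $F|_{\tau=0}=\tfrac{1}{2}\two(\omega)$, not $\two(\omega)$, so the leading coefficient you obtain is $2^k\cdot(\tfrac{1}{2}\two)^\gamma\,h\phi\,B(\gamma+1,\gamma+1)$, differing from \eqref{eq:I0BottomTerm} by a factor $2^{-\gamma}$. This is a normalization issue (the precise constant depends on the exact choice of $d$ in \eqref{eq:bmap} versus $\rho=d_g(\cdot,\partial M)$) rather than a gap in your method; you correctly hedged by writing ``an explicit positive multiple of $\two(\omega)$.'' Otherwise the argument is sound, including your parity claim for the $\tau$-expansion via the $\alpha$-symmetries of the $(\tau,\omega)$ chart, which is where the construction of \cite{Mazzeo2021} is genuinely needed.
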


\begin{proof}[Proof of Theorem \ref{thm:Calphamapping} (sketch)]
	Property \eqref{eq:bmap} alone immediately allows us to understand how terms of a phg expansion of $f$ off of $\partial M$ translate into terms of the phg expansion of $I_0 f$ off of $\partial_0 SM$, as can be seen from the following calculation: a tubular neighborhood of $\partial M$ may look like $[0,\varepsilon]_\rho \times (\partial M)_y$, so that a general term of the boundary expansion of $f\in \A^s(M)$ off of $\partial M$ looks like $\rho^z (\log \rho)^ka(y)$ for some $(z,k)\in \Cm\times \Nm_0$ and $a\in C^\infty(\partial M)$, and it is enough to consider each such term separately by linearity of $I_0$. In the case $k=0$, the computation looks like 
	\begin{align*}
		I_0 [\rho^z a(y)] &= \int_0^\tau \rho^z(\gamma_{x,v}(t)) a(y(\gamma_{x,v}(t))) \ dt \\
		&\!\!\stackrel{t = \tau u}{=} \tau(x,v) \int_0^1 \rho^z (\Upsilon(x,v,u)) a(y(\Upsilon(x,v,u)))\ du \\
		&\!\!\stackrel{\eqref{eq:bmap}}{=} \tau(x,v) \int_0^1 \tau(x,v)^{2z} (u(1-u))^z F^z(z,v,u) a(y(\Upsilon(x,v,u)))\ du \\
		&= \tau^{2z+1} (x,v) \underbrace{\int_0^1 F^z(z,v,u) a(y(\Upsilon(x,v,u)))\ (u(1-u))^z\ du}_{b(x,v;z)}.
	\end{align*}
	We immediately see that the ``integrability condition'' $\text{Re}(z) >-1$ is required to make $b(x,v;z)$ well defined. Further, one may show that $b(\cdot; z) \in C_{\alpha,+}^\infty(\partial_+ SM)$, and as such, admits an asymptotic expansion near $\partial_0 SM$ of the form $b \sim \sum_{p\ge 0} \tau^{2p} b_p (\omega;z)$, with 
	\begin{align*}
		b_0(\omega;z) = \lim_{\tau\to 0} b(\tau,\omega;z) = a(\pi(\omega)) B(z+1,z+1) \two (\omega)^z,
	\end{align*}
	where $B$ denotes the Beta function and $\two(\omega) := \two_y(w,w)$ is the second fundamental form defined in \eqref{eq:two}, evaluated at $\omega = (y,w)$ viewed as an element of $S(\partial M)$. 
	
	As a consequence, we immediately see that $I_0 [\rho^z a(y)]$ admits a $C_\alpha^\infty$ expansion with powers $\{\tau^{2z+1+2p}\}_{p\in \Nm_0}$, with most singular coefficient expressible in terms of intrinsic geometric quantities. 
	The inclusion of $\log$ terms in the boundary expansion can be found in \cite{Mazzeo2021}. 	
\end{proof}

\subsection{$H^{1/2}$ spaces on $\partial_+ SM$ and sharp stability estimates} \label{sec:Honehalf}

On general surfaces, the scales of spaces \eqref{eq:SobolevZernike}, \eqref{eq:scaledata} and \eqref{eq:scale_iso} have not yet been defined, however other Sobolev spaces have been defined on $\partial_+ SM$ where to formulate sharp $L^2-H^{1/2}$ stability estimates. On a general simple surface, the method of Pestov's energy identities allows to derive an $L^2-H^1$ stability estimate of the form 
\begin{align*}
	\|f\|_{L^2(M)} \le C \|I_0 f\|_{H^1(\partial_+ SM)},
\end{align*}
however it does not capture the $1/2$ smoothing property of $I_0$, which in itself requires the design of an appropriate $H^{1/2} (\partial_+ SM)$ space. The latter space can also be defined in terms of regularity with respect to a subset of all variables, and this requires finding the appropriate ones. We now discuss the previous literature addressing these issues. 

\subsubsection{A criterion for anisotropic Sobolev spaces on $\partial_+ SM$}\label{sec:ASstab}

In \cite{Assylbekov2018}, Assylbekov and Stefanov proposed a construction of an anisotropic $\overline{H}_{\partial_+ SM}^{1/2}$ space, and gave an important criterion on the choice of variables (with respect to which one should measure regularity) that make it possible to prove an $L^2(M) \to \overline{H}_{\partial_+ SM}^{1/2}$ stability estimate for X-ray transforms.

The construction of $\overline{H}_{\partial_+ SM}^{1/2}$ goes as follows:

\smallskip
$\bullet$ locally near $(x_0,v_0)\in \partial_+ SM$, fix $k\in \{0,1,\dots,2n-2\}$ and split the $2n-2$ coordinates into $(y',y'')$ with $y' = (y^1,\dots,y^k)$ and $y'' = (y^{k+1},\dots,y^{2n-2})$ satisfying the important requirement (see \cite[Eq. (3.2)]{Assylbekov2018}):
\begin{align}
	\forall y'',\quad \text{the map } (t,y')\mapsto \gamma_{(y',y'')}(t) \text{ is a submersion}.
	\label{eq:submersionCondition}
\end{align}
This condition forces in particular $k\ge n-1$. Then define an $\overline{H}^s$ space using $y'$-derivatives only: for $h$ supported near $(x_0,y_0)$, as in
\begin{align*}
	\|h\|_{\overline{H}^s}^2 = \int (1+|\xi'|^2)^s |{\cal F}_{y'\to \xi'} h (\xi',y'')|^2\ d\xi'\ dy'',
\end{align*}
where $\cal F$ denotes the Fourier transform.

\smallskip
$\bullet$ To make this construction global on $\partial_+ SM$, consider $M_1$ a simple neighborhood of $M$, then via the footpoint map $SM_1\to \partial_+ SM_1$, one thinks of $\partial_+ SM$ as a compact subset of the manifold $(\partial_+ SM_1)^{int}$, which can be covered by finitely many charts as above. Using a partition of unity $\{\chi_j\}_j$, one may complete the above into a global norm
\begin{align*}
	\|h\|_{\overline{H}^s_{\partial_+ SM}}^2 = \sum_j \|\chi_j h\|^2_{\overline{H}^s} .
\end{align*}

For such spaces, the stability estimate reads as follows (the statement in \cite{Assylbekov2018} also covers stability estimates for one-forms and symmetric two-tensors):

\begin{theorem}[Theorem 1.1.(a) in \cite{Assylbekov2018}]\label{thm:ASstab}
	Let $(M,g)$ be a simple manifold and let $\overline{H}_{\partial_+ SM}^{1/2}$ be constructed as above. Then if $\kappa\in C^\infty(SM)$ is a smooth weight such that the weighted X-ray transform\footnote{In the notation of this article, $I_{0,\kappa} f := I(\kappa \cdot f\circ \pi)$ with $I$ defined in \eqref{eq:I}.} $I_{0,\kappa}$ is injective, there exists a constant $C$ such that
	\begin{align}
		\|f\|_{L^2(M)} / C \le \|I_{0,\kappa} f\|_{\overline{H}_{\partial_+ SM}^{1/2}} \le C \|f\|_{L^2(M)}, \qquad \forall f\in L^2(M).
		\label{eq:ASstab}
	\end{align}
\end{theorem}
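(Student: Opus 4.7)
}

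The plan is to obtain \eqref{eq:ASstab} as the combination of a microlocal \emph{upper bound} $\|I_{0,\kappa} f\|_{\overline{H}^{1/2}_{\partial_+ SM}} \le C\|f\|_{L^2(M)}$, coming from FIO theory and the submersion condition \eqref{eq:submersionCondition}, with a \emph{lower bound} that mimics the classical Stefanov--Uhlmann stability \eqref{eq:SUstab} transferred from $L^2\to H^1$ on an extension to $L^2\to H^{1/2}$ on the boundary via adjoint estimates.

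For the upper bound, I would argue locally. Fix $(x_0,v_0)\in\partial_+ SM$ and a chart with splitting $(y',y'')$ satisfying \eqref{eq:submersionCondition}. The map $\Phi(t,y',y'') = \gamma_{(y',y'')}(t)$ is by assumption a submersion in $(t,y')$ for each fixed $y''$, so $\chi_j I_{0,\kappa}$ can be written as an oscillatory integral associated to a canonical relation which, in the $y'$-frequency variables, is a graph of order $-1/2$. The standard $L^2$-continuity of such FIOs (or equivalently a direct $TT^*$ computation on the partial normal operator viewed fiberwise in $y''$) then yields the $1/2$-gain of regularity in the $y'$-directions. Summing the local estimates $\|\chi_j I_{0,\kappa} f\|_{\overline H^{1/2}} \le C_j \|f\|_{L^2(M)}$ over the finite cover gives the right-hand inequality in \eqref{eq:ASstab}.

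For the lower bound, I would pick $M_1\supset M$ a simple neighborhood with $g|_{M}$-agreeing metric, extend $\kappa$ smoothly to $SM_1$, and consider the normal operator $N_\kappa := \widetilde{I}_{0,\kappa}^* \widetilde{I}_{0,\kappa}$ on $M_1$. Under simplicity and the Bolker condition, $N_\kappa$ is an elliptic \PsiDO{} of order $-1$ on $M_1^{int}$, hence admits a left parametrix $Q$ with $Q N_\kappa = \mathrm{Id} + K$ and $K$ smoothing. A Stefanov--Uhlmann contradiction/compactness argument (if the desired bound failed, extract a normalized sequence $f_n\in L^2(M)$ with $\|N_\kappa (e_M f_n)\|_{H^1(M_1^{int})}\to 0$, use $Q$ and the compactness of $K$ on $L^2$ to pass to an $L^2$-convergent subsequence whose limit must lie in $\ker I_{0,\kappa}=\{0\}$) then delivers
\begin{equation*}
    \|f\|_{L^2(M)} \le C \, \|N_\kappa (e_M f)\|_{H^1(M_1^{int})}, \qquad f\in L^2(M).
\end{equation*}
To close the loop, I would identify $\widetilde{I}_{0,\kappa}(e_M f)$ with the extension-by-zero of $I_{0,\kappa} f$ to $\partial_+ SM_1$ through the footpoint inclusion $\partial_+ SM\hookrightarrow \partial_+ SM_1$ (geodesics of $M_1$ missing $M$ contribute nothing). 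The adjoint of the upper bound, applied on $M_1$, gives $\widetilde{I}_{0,\kappa}^*:\overline{H}^{1/2}_{\partial_+ SM_1}\to H^1(M_1^{int})$ bounded, whence
\begin{equation*}
    \|N_\kappa (e_M f)\|_{H^1(M_1^{int})} \le C' \|I_{0,\kappa} f\|_{\overline{H}^{1/2}_{\partial_+ SM}},
\end{equation*}
which yields the left-hand inequality in \eqref{eq:ASstab}.

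The main obstacle will be the upper bound: one has to check sharply that \eqref{eq:submersionCondition} delivers a $1/2$-gain of regularity \emph{precisely} in the distinguished $y'$-directions, and that the resulting anisotropic norm is globally well-defined (up to equivalence) independently of the splitting $(y',y'')$ and of the partition $\{\chi_j\}$. Once this is established, ellipticity of $N_\kappa$, the parametrix-plus-compactness step, and the footpoint identification are by now classical and go through with only routine bookkeeping.
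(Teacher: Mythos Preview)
Your overall architecture (FIO mapping for the forward bound, parametrix plus injectivity for the stability bound) is the right one and matches the paper's sketch in spirit. The paper, however, frames the crux differently: it stresses that the submersion condition \eqref{eq:submersionCondition} is \emph{equivalent} to the statement that the vector fields $\partial_{y^1},\dots,\partial_{y^k}$ are microlocally elliptic on the image of $T^*M\setminus 0$ under the canonical relation of $I_{0,\kappa}$. This ellipticity is what allows one to build a \emph{local parametrix} for $I_{0,\kappa}$ directly (recovering $f$ from $I_{0,\kappa}f$ modulo a smoothing error), after which injectivity absorbs the compact remainder. You instead detour through the normal operator $N_\kappa$ on an extension $M_1$ and the classical $L^2\to H^1$ estimate; this is legitimate but more roundabout, and it obscures the precise place where \eqref{eq:submersionCondition} enters.

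There is also a genuine imprecision in your closing step. Boundedness of $I_{0,\kappa}\colon L^2(M)\to \overline{H}^{1/2}$ does \emph{not}, by duality alone, give $\widetilde{I}_{0,\kappa}^*\colon \overline{H}^{1/2}_{\partial_+ SM_1}\to H^1(M_1^{int})$: the literal adjoint maps $(\overline{H}^{1/2})^*\to L^2$. To obtain the $\overline{H}^{1/2}\to H^1$ mapping for the backprojection you need either the forward estimate at a shifted index (namely $I_{0,\kappa}\colon H^{-1}\to \overline{H}^{-1/2}$, together with a workable duality theory for the anisotropic scale), or a direct FIO argument for $I_{0,\kappa}^*$. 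In either case the substance is exactly the microlocal ellipticity of $\partial_{y'}$ on the image of the canonical relation that the paper singles out: it is this condition, not the forward bound itself, that guarantees a half-derivative measured only in the $y'$-directions on data translates to a full derivative on $M$. So your identification of the ``main obstacle'' as the upper bound is somewhat misplaced; the delicate step is the lower bound, and the mechanism is the ellipticity-based parametrix rather than a dualized continuity estimate.
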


The requirement \eqref{eq:submersionCondition} is not needed for the construction of the space, but it enters the proof of Theorem \ref{thm:ASstab} in a crucial way, in that it is equivalent to making sure that the derivatives $(\partial_{y^1}, \dots, \partial_{y^k})$ are microlocally elliptic on the image of $T^*M \backslash 0$ under the canonical relation of $I_{0,\kappa}$ (viewed as a Fourier Integral Operator). When \eqref{eq:submersionCondition} is satisfied, the stability estimate follows from a constructing a local parametrix, and exploiting the assumed injectivity of $I_{0,\kappa}$ to remove error terms in smoother norms that parametrix constructions usually produce.

In \cite[Sec. 3]{Assylbekov2018}, the authors also give examples of admissible variables $y'$. In the case $n=2$ and $k=1$, they explain how one may recover the well-known fact that in the Euclidean case, $L^2-H^{1/2}$ stability estimates can be formulated with a scale on the right side defined in terms of Fourier decay only with respect to one variable, see \cite[Sec. II.5]{Natterer2001}. This is also the 'anisotropic' nature of the scale \eqref{eq:scaledata}.

\subsubsection{A sharp stability estimate for $I_0$ on non-negatively curved surfaces}\label{sec:PSstab}

In \cite{Paternain2018} Paternain and Salo defined a $H^{1/2}$-type of Sobolev space on $\partial_+ SM$ for any simple surface with non-positive Gaussian curvature, where to formulate a sharp $L^2 - H^{1/2}$ stability estimate for the X-ray transform. We will briefly describe the idea and refer the interested reader to \cite[Sec. 7.2]{Paternain2023} or \cite{Paternain2018} for detail. 

Specifically, a frame of $\partial SM$ is given by the vertical vector field $V$ (generator of the circle action on the fibers about boundary points) and the tangential vector field $T$ (the Sasaki-horizontal derivative along an oriented tangent vector to $\partial M$), see \cite[Def. 4.5.2]{Paternain2023}. With $L^2(\partial SM)$ defined with respect to the natural Sasaki area form $d\Sigma^2$ on $\partial SM$, one may define the space\footnote{Here we have chosen to keep the $H^s_T$ notation introduced in \cite{Paternain2023}, though the spaces differ from \eqref{eq:scaledata}, as the vector fields $T$ also differ from the one introduced above \eqref{eq:Lzero}.} $H^1_T (\partial_+ SM)$ to be the completion of $C^\infty(\partial SM)$ for the norm
\begin{align*}
	\|w\|^2_{H_T^1 (\partial SM)} = \|w\|_{L^2(\partial SM)}^2 + \|Tw\|_{L^2(\partial SM)}^2,
\end{align*}
and the space $H_T^{1/2}(\partial_+ SM)$ to be the complex interpolation space between $L^2(\partial_+ SM)$ and $H^1_T (\partial_+ SM)$. Then the Paternain-Salo stability estimate reads:  

\begin{theorem}[Theorem 7.2.1 in \cite{Paternain2023}]\label{thm:PSstab}
	Let $(M,g)$ be a compact simple surface with non-positive Gaussian curvature. Then 
	\begin{align}
		\|f\|_{L^2(M)} \le \frac{1}{\sqrt{2\pi}} \|I_0 f\|_{H_T^{1/2}(\partial_+ SM)}, \qquad f\in C^\infty(M).
		\label{eq:PSstab}
	\end{align}
\end{theorem}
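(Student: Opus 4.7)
My plan is to use the Mukhometov--Pestov energy identity on $SM$ applied to the transport lift $u^f$ of $f$, sharpened through the non-positive curvature hypothesis $K \leq 0$ and an interpolation argument on $\partial_+ SM$ to produce the $H^{1/2}_T$ norm on the right-hand side.

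First, define $u = u^f \in C^\infty(SM)$ as the solution of the transport equation $Xu = -f$ on $SM$ with $u|_{\partial_- SM} = 0$, so that $u|_{\partial_+ SM} = I_0 f$. Since $f$ is pulled back from $M$, $VXu = -Vf = 0$. By Santal\'o's formula,
\begin{equation*}
    2\pi \|f\|_{L^2(M)}^2 = \|f\|_{L^2(SM)}^2 = \|Xu\|_{L^2(SM)}^2,
\end{equation*}
so the claim reduces to showing $\|Xu\|_{L^2(SM)}^2 \leq \|I_0 f\|_{H^{1/2}_T(\partial_+ SM)}^2$. Now apply the Pestov identity for surfaces with boundary (see Chap.~4 and Sec.~7.2 of \cite{Paternain2023}), schematically of the form
\begin{equation*}
    \|VXu\|_{L^2(SM)}^2 = \|XVu\|_{L^2(SM)}^2 - (KVu, Vu)_{L^2(SM)} + \mathrm{BT}(u),
\end{equation*}
where $\mathrm{BT}(u)$ is a quadratic form in the traces of $u$ and its tangential $T$- and $V$-derivatives on $\partial SM$. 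Using $VXu = 0$, $K \leq 0$, and the commutator relation $[V, X] = H$ to relate $\|Xu\|_{L^2(SM)}$ to a combination of $\|XVu\|_{L^2(SM)}$ and the boundary data (the fact that $u|_{\partial_- SM} = 0$ ensures $\mathrm{BT}(u)$ is supported on $\partial_+ SM$ and depends only on $I_0 f$, $T(I_0 f)$, $V(I_0 f)$, and $\mu$), one deduces an estimate of $\|Xu\|_{L^2(SM)}^2$ by the resulting boundary quadratic form in $I_0 f$.

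The final step is to identify that boundary quadratic with the $H^{1/2}_T$ norm of $I_0 f$ with sharp constant $1$. By definition, $H^{1/2}_T(\partial_+ SM)$ is the complex interpolation space $[L^2, H^1_T]_{1/2}$; the boundary quadratic naturally mixes an $L^2$ contribution and a tangential $T$-derivative contribution in a bilinear manner compatible with this interpolation, leading to the desired bound. Combining everything yields $2\pi \|f\|_{L^2(M)}^2 \leq \|I_0 f\|_{H^{1/2}_T}^2$, i.e., the statement. The main obstacle is this last step: a naive Cauchy--Schwarz applied to the Pestov boundary form wastes a full derivative and produces only the classical $L^2$--$H^1_T$ Mukhometov bound, so the sharp $H^{1/2}_T$ estimate requires a more refined spectral/interpolation argument treating $u$ as a Dirichlet-energy-minimizing extension of $I_0 f$ into $SM$, whose $SM$-energy of $Xu$ computes exactly the half-derivative of the boundary data, with the $K \leq 0$ term absorbing the error contribution $(KVu, Vu)$ that would otherwise spoil the sharp constant.
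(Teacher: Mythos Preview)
Your proposal correctly sets up the transport lift $u^f$, correctly reduces to bounding $\|Xu\|_{L^2(SM)}^2$, and correctly diagnoses that the raw Pestov boundary term $-(Tu^f, Vu^f)_{\partial SM}$ only yields an $L^2$--$H^1_T$ bound. But the mechanism you propose for the final step --- an interpolation argument viewing $u^f$ as a ``Dirichlet-energy-minimizing extension'' whose interior energy computes a half-derivative of its trace --- is not how the argument goes, and I do not see a way to make it work. The function $u^f$ is fixed by a first-order transport equation, not by minimizing any Dirichlet-type functional, so there is no a priori reason $\|Xu^f\|_{L^2(SM)}^2$ should equal an interpolation norm of its boundary trace. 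Your sketch essentially acknowledges this gap without closing it.

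The actual argument does not try to squeeze a half-derivative out of the Pestov boundary form; it replaces the Pestov identity by the \emph{Guillemin--Kazhdan} identity, which works on the fiberwise Fourier decomposition via the raising/lowering operators $\eta_\pm$. Under $K\le 0$ this yields the sharper boundary inequality (see \cite[Lemma~7.2.4]{Paternain2023})
\[
\|f\|_{L^2(SM)}^2 \le (Tu^f,\, Hu^f)_{L^2(\partial SM)},
\]
where $H$ is the \emph{fiberwise Hilbert transform} on $\partial SM$, a zeroth-order operator. The appearance of $H$ in place of $V$ is precisely the gain over the Pestov identity. A separate argument (\cite[Lemma~7.2.5]{Paternain2023}) then shows that the bilinear form $(T\,\cdot\,,\, H\,\cdot\,)_{\partial SM}$, evaluated at $u^f$, is controlled by $\|I_0 f\|_{H^{1/2}_T(\partial_+ SM)}^2$. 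Neither the Guillemin--Kazhdan refinement nor the role of the fiberwise Hilbert transform appears in your sketch, and without them the half-derivative gain cannot be obtained from the Pestov boundary term alone.
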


The main idea is based on a refinement of the Pestov, or Guillemin-Kazhdan identities (energy identities derived by integration by parts on $SM$). Let $X_{(\x,v)} := \frac{d}{dt}|_{t=0} \varphi_t(\x,v)$ be the geodesic vector field on $SM$. Given $f\in C^\infty(SM)$, if $u^f$ denotes the unique solution to the transport equation $Xu = -f$ on $SM$ with boundary condition $u|_{\partial_- SM} = 0$ giving $u|_{\partial_+ SM} = I_0 f$, then the well-known Pestov-identity with boundary term generates the estimate $\|f\|^{2}_{L^2(SM)} \le - (T u^f, Vu^f)_{\partial SM}$. This is, in spirit, an $L^2-H^1$ stability estimate, and as such does not capture the expected $1/2$-smoothing property. By careful use of the Guillemin-Kazhdan identity and crucially exploiting non-positive curvature, the authors are able to derive the sharper stability estimate (see \cite[Lemma 7.2.4]{Paternain2023})
\begin{align*}
	\|f\|_{L^2(SM)}^2 \le \left( T u^f, H u^f \right)_{\partial SM}, \qquad f\in C^\infty(M), 
\end{align*}
where $H$ denotes the fiberwise Hilbert transform, a zero-th order operator on the fibers of $\partial SM$. Theorem \ref{thm:PSstab} follows then by showing that this last right-hand side is equivalent to a squared $H^{1/2}_T(\partial_+ SM)$ norm (see \cite[Lemma 7.2.5]{Paternain2023}) of $I_0 f$.

The same authors also generalize Theorem \ref{thm:PSstab} to tensor fields and manifolds of dimension greater than two, see \cite[Sec. 7.3-7.5]{Paternain2023}.

\section{Forward estimates for $I_0^\sharp$ on convex, non-trapping manifolds} \label{sec:forward_backproj}

We now turn to the study of $I_0^\sharp$. We should preface this with the comment that the study of boundary behavior mapping under $I_0$ is somewhat simpler than that of $I_0^\sharp$ for the following reason: {\em in convex geometries, the boundary behavior of $I_0 f$ near $\partial_0 SM$ is only dictated by the boundary behavior of $f$ near $\partial M$}, in the sense that if $f$ has compact support in $M^{int}$, then $I_0 f$ has compact support in $(\partial_+ SM)^{int}$. This is no longer true for $I_0^\sharp$: the image of a compactly supported function in $(\partial_+ SM)^{int}$ under $I_0^\sharp$ is, in general, supported all the way to $\partial M$.

In this chapter, we will first describe Pestov and Uhlmann's original result on a good domain of definition for $I_0^\sharp$ in Sec. \ref{sec:PUresult}. We will then discuss the generalization to phg-conormal spaces in Sec. \ref{sec:bfib}. Finally, we will discuss a result of a similar flavor in Sec. \ref{sec:Katsevich}, applied to the adjoint of the classical Radon transform on $\Rm^2$. 


\subsection{The Pestov-Uhlmann result} \label{sec:PUresult} 

We first describe the original result of Pestov and Uhlmann, regarding the construction of a natural domain of definition for $I_0^\sharp$ to land into $C^\infty(M)$. Recall the definition $C_{\alpha,+}^\infty(\partial_+ SM) = \{w\in C^\infty(\partial_+ SM),\ A_+ w\in C^\infty(\partial SM)\}$ from \eqref{eq:Calphapm}, then we have the following characterization
\footnote{Notationally, the statement differs from the original one in two ways: first, the space $C_{\alpha,+}^\infty$ is originally written as $C_\alpha^\infty$; second, it is originally defined via \eqref{eq:PUcharac2}, then defined to equal the expression in \eqref{eq:Calphapm}. The reason for the switch is that the current definition admits an analogous definition with $+$ replaced by $-$, while the expression in the right-hand side of \eqref{eq:PUcharac2} does not.}:

\begin{theorem}[Lemma 1.1 in \cite{Pestov2005}]	\label{thm:PU}
	Let $(M,g)$ be a compact non-trapping manifold with strictly convex boundary. Then 
	\begin{align}
		C_{\alpha,+}^\infty (\partial_+ SM) := \{w \in C^\infty(\partial_+ SM): w\circ F \in C^\infty(SM)\}.
		\label{eq:PUcharac2}
	\end{align}
\end{theorem}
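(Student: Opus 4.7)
The plan is to prove the two inclusions separately. For the direction ``$w \circ F \in C^\infty(SM) \Rightarrow A_+ w \in C^\infty(\partial SM)$'', identify $F$ explicitly on the boundary: for $(x,v) \in \partial_+ SM$ one has $\tau(x,-v) = 0$ because $-v$ points outward at a boundary point, whence $F(x,v) = (x,v)$; for $(x,v) \in \partial_- SM$, flowing backward to first exit reaches the opposite geodesic endpoint with inward-pointing velocity, which is $\alpha(x,v)$ under the involutive scattering relation on $\partial SM$. Thus $F|_{\partial_+ SM} = \mathrm{id}$ and $F|_{\partial_- SM} = \alpha$, so $(w \circ F)|_{\partial SM} = A_+ w$; restricting a smooth function on $SM$ to the smooth submanifold $\partial SM$ yields a smooth function, giving $A_+ w \in C^\infty(\partial SM)$.

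For the converse, assume $A_+ w \in C^\infty(\partial SM)$. Since $F$ is smooth on $SM \setminus \partial_0 SM$, $w \circ F$ is smooth there, and it suffices to establish smoothness at each $(x_0, v_0) \in \partial_0 SM$. Work in boundary-normal coordinates $(\rho, y)$ on $M$ near $x_0$ with $\rho = d_g(\cdot, \partial M)$, and consider $r(t; x, v) := \rho(\pi \varphi_t(x,v))$, whose zeros in $t$ record the exit times. Strict convexity of $\partial M$ forces
\begin{equation*}
r(0; x_0, v_0) = \partial_t r(0; x_0, v_0) = 0, \qquad \partial_t^2 r(0; x_0, v_0) = -\two_{x_0}(v_0, v_0) < 0,
\end{equation*}
and Malgrange's preparation theorem provides a local factorization
\begin{equation*}
r(t; x, v) = -E(t; x, v)(t - s_+(x, v))(t - s_-(x, v))
\end{equation*}
on a neighborhood of $(0; x_0, v_0)$, with $E > 0$ smooth and with smooth elementary symmetric functions $s_+ + s_- = \tilde\tau(x,v)$ and $s_+ s_- = -\rho(x) h(x,v)$ for some smooth $h > 0$. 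Away from $\partial_0 SM$ the roots separate as $s_+ = \tau(x,v) \ge 0$ and $s_- = -\tau(x,-v) \le 0$, so $F(x, v) = \varphi_{s_-(x,v)}(x,v)$.

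Choose coordinates $(\eta, \zeta)$ on $\partial_+ SM$ near $(x_0, v_0)$ with $\eta \ge 0$ a boundary defining function for $\partial_0 SM$; by linearizing the scattering involution $\alpha$ around its fixed set $\partial_0 SM$ we may arrange $\alpha(\eta, \zeta) = (-\eta, \zeta)$ exactly in these coordinates. A direct computation using the geodesic equation gives $\eta(F(x,v))^2 = (s_+ - s_-)^2 \cdot \psi(x,v) = (\tilde\tau^2 + 4 \rho h)(x,v) \cdot \psi(x,v)$ for a smooth $\psi > 0$, while $\zeta(F(x,v))$ is expressible in terms of the symmetric combinations of $s_\pm$ and the smooth flow; both are therefore smooth on a neighborhood of $(x_0, v_0)$ in $SM$. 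Under the arranged $\alpha$, the hypothesis $A_+ w \in C^\infty(\partial SM)$ is precisely the assertion that $w$ extends as an even smooth function of $\eta$ across $\eta = 0$, and Whitney's theorem on even smooth functions then yields $w(\eta, \zeta) = W(\eta^2, \zeta)$ for a smooth $W$. Substituting the smooth quantities $\eta(F)^2$ and $\zeta(F)$ gives $w \circ F = W(\eta(F)^2, \zeta(F)) \in C^\infty(SM)$ near $(x_0, v_0)$. The main obstacle I anticipate is the combined Morse-type factorization of $r$ and the coordinate linearization of $\alpha$: both rely crucially on the non-degeneracy $\partial_t^2 r(0; x_0, v_0) \ne 0$, i.e.\ on strict convexity of $\partial M$, and together they drive the algebraic bookkeeping converting $A_+ w$ smooth into the concrete evenness $w = W(\eta^2, \cdot)$ that survives substitution.
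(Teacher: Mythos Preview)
Your approach is correct and is essentially the fold-theory argument that the paper references. The paper sketches two routes: one embeds $M$ in a simple neighborhood $N$, shows that the footpoint map $F^N|_{\partial SM}$ is a Whitney fold with associated involution $\alpha$, and then invokes H\"ormander's general result on folds; the other (from \cite[Sec.~5.1]{Paternain2023}) works directly with the singular structure of $\tau$ near $\partial_0 SM$. Your argument is a hands-on unpacking of the same machinery, closest in spirit to the second route: Malgrange preparation of $r(t;x,v)$ produces the quadratic-root structure $s_\pm$ that encodes the fold, linearizing $\alpha$ gives the normal form $(\eta,\zeta)\mapsto(-\eta,\zeta)$, and Whitney's even-function theorem combined with the smoothness of $\eta(F)^2$ and $\zeta(F)$ is precisely the content of the fold lemma, re-derived in this instance rather than cited. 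The paper buys brevity by appealing to the abstract result; you buy explicitness.

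One step is worth tightening: the smoothness of $\zeta(F)$ (and analogously $\eta(F)^2$) deserves a sharper justification than ``expressible in terms of symmetric combinations of $s_\pm$ and the smooth flow''. Extend $\zeta$ smoothly off $\partial SM$ to $\tilde\zeta$ on a neighborhood in $SM$; since $\zeta$ is $\alpha$-invariant and $\varphi_{s_\pm}(x,v)\in\partial SM$ with $\alpha(\varphi_{s_-})=\varphi_{s_+}$, one has $\zeta(F)=\tfrac{1}{2}\bigl[\tilde\zeta(\varphi_{s_+})+\tilde\zeta(\varphi_{s_-})\bigr]$, and this is smooth because any expression $G(s_+,\cdot)+G(s_-,\cdot)$ with $G$ smooth is a symmetric function of the roots of a quadratic with smooth coefficients (Newton's identities, or a direct Taylor argument). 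The same device yields $\eta(F)^2=\tfrac{1}{2}\bigl[\tilde\eta(\varphi_{s_+})^2+\tilde\eta(\varphi_{s_-})^2\bigr]$ smooth, without needing the explicit formula $(s_+-s_-)^2\psi$, whose smooth-$\psi$ part would otherwise require its own argument. This is not a gap---your phrasing points at the right mechanism---but making it explicit closes the loop cleanly.
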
 
As a result, for $w\in C_{\alpha,+}^\infty (\partial_+ SM)$, since $I_0^\sharp w$ is the fiber-average of $w\circ F$ which is smooth on $SM$ by virtue of Theorem \ref{thm:PU} we naturally have that 
\begin{align}
	I_0^\sharp (C_{\alpha,+}^\infty(\partial_+ SM))\subset C^\infty(M).
	\label{eq:I0sharpPU}
\end{align}

\begin{proof}[Proof of Theorem \ref{thm:PU} (sketch)] There are two proofs, whose detail in full glory can be found in Sections 5.1 and 5.2 of \cite{Paternain2023}, respectively.

	Pestov and Uhlmann's original proof is based on the theory of fold maps, see \cite[Sec. 4]{Pestov2005} or \cite[Sec. 5.2]{Paternain2023}: $M$ can be embedded in the interior of a simple manifold with boundary $N$, then the footpoint map of $N$, $F^N \colon SN\to \partial_+ SN$ (defined as in \eqref{eq:footpoint}), when restricted to $\partial SM$, is proved in \cite[Theorem 4.1]{Pestov2005} to be a Whitney fold\footnote{in the sense of e.g. \cite[Def. 5.3.1]{Paternain2023}} at every point of $\partial_0 SM$, with associated involution the scattering relation $\alpha$ on $\partial SM$. With this context in place, the characterization \eqref{eq:PUcharac2} follows from \cite[Th. C.4.4]{Hoermander2007}. 

	A second proof, also based on ideas from fold theory but giving more refined information on the singular behavior of the exit time function $\tau$ near $\partial_+ SM$, is given in \cite[Sec. 5.1]{Paternain2023}.
\end{proof}

\subsection{Forward mapping properties on phg-conormal spaces} \label{sec:bfib}

We now present a generalization of \eqref{eq:I0sharpPU} to phg-conormal spaces appearing in \cite[Theorem 2.2]{Mazzeo2021}. Recall the generalities on phg-conormal spaces introduced in Section \ref{sec:phgonmwb}. Then \cite[Theorem 2.2]{Mazzeo2021} states, in words: $I_0^\sharp$ maps $C_\alpha^\infty$-phg spaces on $\partial_+ SM$ to phg spaces on $M$; the action of $I_0^\sharp$ displays more complex phenomena than that of $I_0$ (seen in Theorem \ref{thm:Calphamapping}), such as the creation, or annihilation, of $\log$ terms in the resulting expansions; in specific coordinate systems, one can compute the singular coefficients in the expansion (the coefficient of the most singular term is most relevant as it is geometrically invariant). More specifically:

\begin{theorem}[Theorem 2.2 in \cite{Mazzeo2021}]\label{thm:I0starIDX}
    Fix $\gamma\in \Cm$, $k\in \Nm_0$ and $a\in C^\infty(\partial_0 SM)$. Then $I_0^\sharp (a(\omega) \tau^\gamma \log^k \tau \chi(\tau))$ is polyhomogeneous, with index set contained in (and possibly equal to) the union $\Nm_0\times \{0\} \cup E_{\gamma,k}$, where 
    \begin{align}
	    E_{\gamma,k} := \left\{
	    \begin{array}{ll}
		    (\frac{\gamma+1}{2} + \Nm_0) \times \{0, \dots, k-1\}, & \gamma\in 2\Nm_0, \\
		    (\frac{\gamma+1}{2} + \Nm_0) \times \{0, \dots, k\} \cup ( \Nm_0 \cap (\frac{\gamma+1}{2} + \Nm_0) ) \times \{k+1\}, & \gamma\in 2\Zm+1, \\
		    (\frac{\gamma+1}{2} + \Nm_0) \times \{0, \dots, k\}, & \text{otherwise},
	    \end{array}
	    \right.
	    \label{eq:Egamk}
    \end{align}
    where in the first case, $E_{\gamma,k} = \emptyset$ if $k=0$.
    In coordinates $(\rho,y)\in [0,\varepsilon)\times \partial M$ on $M$ near $\partial M$ 
    with $\rho(x) = d_g(x,\partial M)$, we conclude that
    \begin{align*}
	    I_0^\sharp  (a(\omega) \tau^\gamma \log^k \tau \chi(\tau)) \sim \sum_{(z,\ell)\in \Nm_0 \cup E_{\gamma,k}} a_{z,\ell}(y) \rho^z \log^\ell \rho\ dA(y),
	    \end{align*}
	    where the coeffient of lowest order in the part of the expansion corresponding to $E_{\gamma,k}$ is given by
	    \begin{align*}
		a_{z_0,\ell_0}(y) = c_{z_0,\ell_0} \int_{S_y (\partial M)} a(y,w) \two_y(w)^{\frac{-\gamma+1}{2}}\ dS_y(w), \qquad \omega = (y,w)\in \partial_0 SM,
	    \end{align*}
	    with the following expressions: 
	    \begin{center}
		    \begin{tabular}{c|c|c}
			    case & $(z_0,\ell_0)$ & $c_{z_0,\ell_0}$ \\
			    \hline
			    \hline
			    $\gamma = 2m, m\in \Nm_0$ & $(m+1/2,k-1)$ & $(2\pi k)(4^{2m+2})/2^{(k+3)(2m+2)}$ \\
			    $\gamma = 2m-1, m\in \Nm_0$ & $(m,k+1)$ & $-\binom{2m}{m}/((k+1) 2^{k+2})$ \\
			    $\gamma\notin \Nm_0-1$ & $(\frac{\gamma+1}{2},k)$ & ${2^{-(k+3)}} B\left(-\frac{\gamma+1}{2}, -\frac{\gamma+1}{2} \right)$
		    \end{tabular}
	    \end{center}
\end{theorem}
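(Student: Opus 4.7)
The plan is to view $I_0^\sharp$ as a pushforward along a (resolved) b-fibration built from the time-rescaling map $\Upsilon$ of \eqref{eq:upsilon}, and extract the polyhomogeneous expansion by a direct local calculation. Since $\Upsilon$ is a diffeomorphism of the interiors of $\partial_+ SM \times [0,1]$ and $SM$, and since $I_0^\sharp h = \pi_\ast (h \circ F)$, one has the key identity
\begin{equation*}
I_0^\sharp g = (\pi\circ\Upsilon)_\ast\,(\tau\cdot g),
\end{equation*}
with the weight $\tau$ coming from $\Upsilon^\ast\, d\mathrm{vol}_{SM} = \tau\, d\Sigma^{2n-2}\,du$. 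The composed map $\pi\circ\Upsilon\colon \partial_+ SM \times [0,1] \to M$ degenerates at the boundary hypersurfaces $\partial_0 SM \times [0,1]$ and $\partial_+ SM \times \{0,1\}$; after an appropriate real blow-up of corner faces, the identity \eqref{eq:bmap} together with the convexity relation $F|_{\tau=0} = \tfrac{1}{2}\two$ makes it a b-fibration onto $(M,\partial M)$. The only point of $M$ at which $I_0^\sharp g$ can fail to be smooth is $\partial M$, and the $\Nm_0\times\{0\}$ part of the index set is the contribution from the interior.

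Working in boundary normal coordinates $(\rho,y)$ on $M$ and coordinates $(\tau,\omega)$ on $\partial_+ SM$ with $\omega=(y_0,w_0)\in\partial_0 SM$, I parametrize the nearly-tangential portion of $S_x M$ (the only part contributing to small $\tau\circ F$) by $(u,w_0)\in (0,1)\times S_y(\partial M)$. The model relation $\rho = \tau^2 u(1-u) F(\omega,u)$ determines $\tau = \rho^{1/2}[u(1-u)F(\omega,u)]^{-1/2}$, and computing the Jacobian of $v\leftrightarrow (u,w_0)$ (which itself carries a factor of $\rho^{1/2}$ reflecting the thin-slab width of near-tangential directions) reduces the contribution from the glancing region to an explicit integral
\begin{equation*}
I_0^\sharp g(\rho,y) \sim \int_{S_y(\partial M)}\int_0^1 a(y,w_0)\,\tau^\gamma (\log\tau)^k\,\widetilde{J}(\rho;u,w_0)\,du\,dS_y(w_0) + C^\infty,
\end{equation*}
where $\widetilde{J}$ is smooth in $\rho^{1/2}$ and non-vanishing at $\rho=0$. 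Substituting and Taylor-expanding in $\rho^{1/2}$ yields a joint expansion with terms of the form $\rho^{(\gamma+1)/2+p}(\log\rho)^\ell$, $p\in\Nm_0$, $0\le \ell\le k$.

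The leading coefficient is extracted from the $\rho^{1/2}$-leading substitution $F|_{\tau=0}=\tfrac{1}{2}\two$, the binomial expansion $(\log\tau)^k = (\tfrac{1}{2}\log\rho - \tfrac{1}{2}\log[u(1-u)F])^k$ (which contributes the $2^{-k}$ factor in front of the top-order $(\log\rho)^k$ term), and the Beta-type integral $\int_0^1 [u(1-u)]^{-\gamma/2}\,du$, which after reflection manipulations reproduces the stated $B(-\tfrac{\gamma+1}{2},-\tfrac{\gamma+1}{2})$ factor. The remaining angular integral $\int_{S_y(\partial M)}a(y,w_0)\,\two_y(w_0)^{(1-\gamma)/2}\,dS_y(w_0)$ combines the input data $a$ with the leading-order $\two$-dependent Jacobian and substitution factors.

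The main obstacle is the exceptional-case bookkeeping in \eqref{eq:Egamk}: when $\gamma\in 2\Nm_0$ the Beta function $B(-\tfrac{\gamma+1}{2},-\tfrac{\gamma+1}{2})$ (viewed as a function of complex $\gamma$) has a pole whose residue must combine with the vanishing of certain Taylor coefficients to produce the shift $\ell\le k-1$ and the absence of new logs beyond the input $k$; when $\gamma\in 2\Zm+1$, the half-integer scale $\rho^{(\gamma+1)/2+p}$ coincides with the integer scale of the smooth part, and a first-order Beta-pole interacting with the $(\log\tau)^k$ factor generates an extra $(\log\rho)^{k+1}$ term at the integer intersection points. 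The cleanest implementation treats $\gamma$ as a complex parameter, performs the expansion generically, and takes limits at the exceptional integer values, with the residue/cancellation analysis producing precisely the three-case split of \eqref{eq:Egamk}. This residue bookkeeping, combined with the uniform justification of the b-fibration regularity for the expansion coefficients, is the most delicate step of the proof.
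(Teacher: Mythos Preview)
Your overall strategy---realizing $I_0^\sharp$ as a pushforward along $\pi\circ\Upsilon$ and extracting the expansion near $\partial M$ by a local substitution---is in the right spirit, but there is a genuine gap that the paper singles out as the crux of the argument. As written, your expansion variable near the boundary is $\rho^{1/2}$: you substitute $\tau = \rho^{1/2}[u(1-u)F]^{-1/2}$ and state that the Jacobian factor $\widetilde J$ is smooth in $\rho^{1/2}$. Taylor-expanding then yields terms of the form $\rho^{(\gamma+1)/2 + p/2}(\log\rho)^\ell$ with $p\in\Nm_0$, i.e.\ half-integer increments. But the statement requires $E_{\gamma,k}\subset (\tfrac{\gamma+1}{2}+\Nm_0)\times\{\cdots\}$ with \emph{integer} increments, and in the special case $g\in C_{\alpha,+}^\infty(\partial_+ SM)$ this is precisely the difference between the overprediction $C^\infty(M)+\sqrt{\rho}\,C^\infty(M)$ (which is what the b-fibration picture gives directly, see \eqref{eq:overprediction}) and the sharp answer $C^\infty(M)$. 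Your proposal asserts the integer-step conclusion without identifying a mechanism that kills the odd half-integer terms; this is not a bookkeeping issue like the three-case split of \eqref{eq:Egamk}, but a separate cancellation that must be proved.

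The paper's route is different at exactly this point: rather than a direct substitution, it passes to the Mellin transform $z\mapsto\langle[I_0^\sharp g]_M(z),h\rangle$, uses Santal\'o's formula to rewrite it as \eqref{eq:rewrite}, and expands the generalized Beta functional $B(\tau,\omega;z)$ in $\tau$. The key analytic fact (Lemmas~5.3--5.4 of \cite{Mazzeo2021}) is that each coefficient $B_q(\omega;z)$ has \emph{simple zeros at $z\in -\Nm_0-q-\tfrac12$}; these zeros cancel the simple poles that the $\tau$-integral in \eqref{eq:product} would otherwise place at half-integer locations. That pole--zero cancellation is what sharpens the index set from half-integer to integer steps, and it has no visible analogue in your direct expansion. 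A smaller point: your pushforward identity should carry the Santal\'o weight $\tau\mu$, not $\tau$ alone (cf.\ the expression for $(I_0^\sharp h)\,dM_x$ just before \eqref{eq:overprediction}); this affects the power counting and is likely why your Beta integral $\int_0^1[u(1-u)]^{-\gamma/2}\,du = B(1-\tfrac{\gamma}{2},1-\tfrac{\gamma}{2})$ does not match the stated constant $B(-\tfrac{\gamma+1}{2},-\tfrac{\gamma+1}{2})$.
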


In the next two paragraphs, we first explain why mapping properties between phg spaces can be expected, by viewing $I_0$ and $I_0^\sharp$ as involving pushforward and pullback by b-fibrations and invoking Melrose's celebrated PushForward and PullBack theorems, with the caveat that a direct applications produces an overestimation of the image index set in the study of $I_0^\sharp$. We then give a scheme of proof for Theorem \ref{thm:I0starIDX}, in particular explaining situations where the creation or annihilation of log terms can occur. 


\paragraph{The double b-fibration picture and Melrose's pushforward and pullback theorems.}

The celebrated PushForward and PullBack theorems of Melrose \cite{Melrose1992} are the cornerstone of the construction of exotic pseudo-differential algebras on manifolds with boundary, allowing for the construction of pseudodifferential parametrices of differential operators whose ellipticity is defined with respect to a distinguished 'asymptotic' geometric model at the boundary. Examples include the b-calculus \cite{Grieser2001}, adapted to asymptotically cylindrical geometries, and the 0- (more generally, edge-) calculus \cite{Mazzeo1991}, adapted to asymptotically hyperbolic geometries. Loosely speaking, many operations between distributions (e.g. operator composition) can be viewed as compositions of pushforward and pullback operations by {\em b-fibrations}\footnote{i.e., maps between manifolds with corners which are fibrations in the interior, and with specific behavior at the boundary hypersurfaces encoded by a matrix of scalars indexed over the bhf's of the domain and the target manifolds; see \cite[Sec. 2.1]{Mazzeo2021}.}, and Melrose's theorems help predict which phg-conormal behavior gets generated at each step.   

In the case of $I_0$ and $I_0^\sharp$ on convex, non-trapping manifolds, theres is a way to exploit this machinery as follows. Heuristically, the footpoint map $F\colon SM\to \partial_+ SM$ defined in \eqref{eq:footpoint} is just shy of being a fibration of $SM$ by geodesics, in that such geodesic segments collapse to single points as $(\x,v)$ approaches $\partial_0 SM$. However, the degeneracy it has at $\partial_0 SM$ can be resolved by the map $\Upsilon$ introduced in \eqref{eq:upsilon}, in the way that the dashed arrows in the diagram below
\begin{center}
	\begin{tikzcd}[]
    & \partial_+ SM \times [0,1]  \arrow{d}{\Upsilon} \arrow[dashed, bend left]{ddr}{\widehat{\pi}} \arrow[dashed, bend right]{ddl}[swap]{\widehat{F}} & \\  &  SM \arrow{dl}[swap]{F}\arrow{dr}{\pi} & \\
\partial_+ SM && M
\end{tikzcd}	
\end{center}    
make up a double b-fibration between manifolds-with-corners, with the following geometric data

\begin{center}
    \begin{tabular}{c||c|c|c}
	$e_{\widehat{\pi}}$ & $G_0$ & $G_1$ & $G_2$ \\
	\hline
	$\partial M$ & 1 & 1 & 2
    \end{tabular}
    \qquad
    and
    \qquad
    \begin{tabular}{c||c|c|c}
	$e_{\widehat{F}}$ & $G_0$ & $G_1$ & $G_2$ \\
	\hline
	$\partial_0 SM$ & 0 & 0 & 1
    \end{tabular},
\end{center}
where we have denoted the boundary hypersurfaces of $\partial_+ SM\times [0,1]$ by $G_{0/1} = \partial_+ SM\times \{0/1\}$ and $G_2 = \partial_0 SM\times [0,1]$. 
Moreover, in the spirit of Gelfand transforms associated with double fibrations, one may express $I_0$ and $I_0^\sharp$ as composition of pushforwards and pullback by such b-fibrations as follows: 
\begin{align*}
	(\frac{1}{\tau} I_0 f ) d\Sigma^{2d-2} = \widehat{F}_\star \left( [\widehat{\pi}^\star f] d\Sigma^{2d-2}\ du \right), \qquad (I_0^\sharp h)\ dM_x = \widehat{\pi}_\star \left( \widehat{F}^\star (\tau\mu h) d\Sigma^{2d-2} \ du \right). 
\end{align*} 

At that point, Melrose's pushforward and pullback theorems can immediately give some versions of Theorems \ref{thm:Calphamapping} and \ref{thm:I0starIDX} above, see \cite[Sec. 3.3]{Mazzeo2021}. However, for the case of $I_0^\sharp$, it results in overestimates of the index sets produced, namely, 
\begin{align}
    I_0^\sharp (C_\alpha^{\infty}(\partial_+ SM)) \subset C^\infty(M) + \sqrt{\rho}\ C^\infty(M). 
    \label{eq:overprediction}
\end{align}
Comparing to \eqref{eq:PUI0sharp}, further work had to be done in \cite{Mazzeo2021} in order to understand why the $\sqrt{\rho}\ C^\infty$ expansion should not appear in \eqref{eq:overprediction}. To that end, the next paragraph takes a deeper dive into the proof of Theorem \ref{thm:I0starIDX}, and gives some insights into the resolution of this issue. 

\paragraph{Cancellation of poles and a sharpening of the mapping properties of $I_0^\sharp$.} 

\begin{proof}[Proof of Theorem \ref{thm:I0starIDX} (sketch)]
	To refine the mapping properties of $I_0^\sharp$, one looks for cancelling mechanisms in the proof of the PushForward theorem that uses Mellin transforms. Specifically, if the goal is to compute the index set of $I_0^\sharp g$ for some term of the form $g = a(\omega) \tau^\gamma (\log \tau)^k \chi(\tau)$ with $a\in C^\infty(\partial_0 SM)$, $(\gamma,k)\in \Cm\times \Nm_0$, the following Mellin characterization is key: the index set of $I_0^\sharp g \frac{d Vol}{\rho}$ coincides with the poles of the Mellin functional 
	\begin{align*}
		z\mapsto \left( C^\infty(\partial M)\ni h \mapsto \langle [I_0^\sharp g]_M(z), h\rangle := \int_{\partial M} h(y) \int_0^\varepsilon \rho^z I_0^\sharp g \frac{d Vol}{\rho}\right).
	\end{align*}
	Specifically, a pole of order $\ell + 1$ at $z_0\in \Cm$ of $z\mapsto \langle [I_0^\sharp g]_M(z), h\rangle$ gives rise to a term $\rho^{-z_0} (\log \rho)^\ell$ in the expansion of $I_0^\sharp g \frac{d Vol}{\rho}$ off of $\partial M$, and {\it vice versa}. Computations in \cite{Mazzeo2021}, exploiting Santal\'o's formula and \eqref{eq:bmap}, allow to rewrite the above quantity as 
	\begin{align}
		\langle [I_0^\sharp g]_M(z), h\rangle = \int_{\partial_+ SM} a(\omega) \tau^{2z+\gamma+1} \log^k \tau \chi(\tau) B(\tau,\omega; z) \frac{\mu d\Sigma^{2d-2}}{\tau^2},
		\label{eq:rewrite}
	\end{align}
	where we define the generalized\footnote{This recovers the traditional Beta function $B(z,z)$ when $h= F \equiv 1$.} Beta functional
	\begin{align*}
		B(\tau,\omega; z) := \int_0^1 h(y\circ \Upsilon (\tau,\omega,u)) F(\tau,\omega,u)^{z-1} (u (1-u))^{z-1}\ du. 
	\end{align*}
	One then derives asymptotic expansions in $\tau$ (i.e. in $\partial_+ SM$ off of $\partial_0 SM$) for the terms appearing in \eqref{eq:rewrite}, namely
	\begin{align*}
		B(\tau,\omega; z) &\sim h(\pi(\omega)) \two(\omega)^{z-1} B(z,z) + \sum_{q=1}^\infty \tau^{2q} B_{q}(\omega; z), \\
		\mu d\Sigma^{2d-2} &\sim \sum_{p=0}^\infty s_p(\omega) \tau^{2p+1} d\tau d\Omega, \qquad (\Omega:\text{ volume form on } \partial_0 SM), 
	\end{align*}
	where $s_0$ admits an explicit expression (see \cite[Lemma 5.5]{Mazzeo2021}), and where for each $q\ge 1$, $B_q(\omega; z)$ has simple poles at $-\Nm_0$, and simple zeros at $-\Nm_0 - q - 1/2$ (see \cite[Lemmas 5.3, 5.4]{Mazzeo2021}). 
	
	Combining all of this together, each term in the asymptotic expansion 
	\begin{align*}
		B(\tau,\omega; z) \frac{\mu}{d\Sigma^{2d-2}}{\tau^2} \sim \sum_{p\ge 0} \tau^{2p} C_p(\omega; z) \frac{d\tau}{\tau}\ d\Omega		
	\end{align*}
	gives rise to a contribution to $\langle [I_0^\sharp g]_M(z), h\rangle$ of the form 
	\begin{align}
		\int_{\partial_0 SM} a(\omega) C_p(\omega; z)\ d\Omega \cdot \int_0^\varepsilon \tau^{2(z+p)+\gamma+1} \log^k \tau \chi(\tau) \frac{d\tau}{\tau},
		\label{eq:product} 
	\end{align} 
	where the second factor has a pole of order $k+1$ at $z= - p - \frac{\gamma+1}{2}$, and where the first factor is seen to have simple poles at $-\Nm_0$ and {\em simple zeros at $-\Nm_0-p-1/2$}.
	
	We now briefly explain how this very last fact is what is responsible for cancellation of certain asymptotic terms (via pole cancellations at the Mellin level), and results in the narrowing of predicted index sets when mapping through $I_0^\sharp$: for example if $g\in C_\alpha^\infty (\partial_+ SM)$, then $g\sim \sum_{\ell\ge 0} \tau^{2\ell} a_\ell(\omega)$ near $\partial_0 SM$. For each term in this asymptotic expansion, the calculations above (with $\gamma = 2\ell$ and $k=0$) predict that the second factor in \eqref{eq:product} has a pole of order 1 at $z = -p - \ell - \frac{1}{2}$ while the first factor has simple poles at $-\Nm_0$ and simple zeros at $-\Nm_0 - p- \frac{1}{2}$. In particular, the product of both terms in \eqref{eq:product} does not have poles outside possibly the negative integers, which corresponds to a classical Taylor expansion off of $\partial M$. This in particular recovers \eqref{eq:PUI0sharp} which, without the presence of additional zeros, would have given \eqref{eq:overprediction}.
\end{proof}

\subsection{A non-convex example: the backprojection operator on $\Rm^2$} \label{sec:Katsevich}

While the context of the result presented below is not that of a convex manifold (rather, a complete, asympotically conic manifold), the results' formulation bear enough similarities that should be mentioned here. In \cite{Katsevich2001}, Katsevich is concerned with the mapping properties of the adjoint of the {\em Radon transform}\footnote{i.e., the transform that integrates over hyperplanes. On functions in $\Rm^2$, this coincides, up to reparameterization, with the X-ray transform.} on $\Rm^n$. The expression of this adjoint corresponds to the backprojection operator \eqref{eq:backproj} when $n=2$. Specifically, for $n\ge 2$, article \cite{Katsevich2001} is concerned with the study for the two operators
\begin{align}
	Rf(\alpha,p) = \int_{\Rm^n} f(x) \delta (\alpha\cdot x - p)\ dx, \qquad R^* \mu (x) = \int_{\Sm^{n-1}} \mu(\alpha,\alpha\cdot x)\ d\alpha,
	\label{eq:Radon}
\end{align}
where $\mu$ is defined on the space of hyperplanes $Z_n = \Sm^{n-1}_\alpha\times \Rm_p$ and satisfies the evenness condition $\mu(\alpha,p) = \mu(-\alpha,-p)$. For $n=2$, these are the analogues of $I_0$ and $I_0^\sharp$ on $\Rm^2$.  

To describe the results of \cite{Katsevich2001} in the language of the previous sections, we briefly introduce some notation: given an $\Nm_0$-valued sequence ${\bf M} = \{M_k\}_{k\in \Nm}$ we denote the set $E({\bf M}):= \cup_{k\in \Nm} \{k\}\times \{0,\dots,M_k\}$. Note that $E({\bf M})$ is a $C^\infty$-index set in the sense of Definition \ref{def:idxset} if and only if the sequence ${\bf M}$ is non-decreasing. One can think of such sets as a subclass of index sets of ``classical Taylor expansions with log terms''. We denote $\overline{\Rm^n}$ the radial compactification of $\Rm^n$, a manifold with boundary, of interior $\Rm^n$ and boundary defining function $\rho = 1/|x|$. Similarly, $\overline{Z_n} = \Sm^{n-1} \times \overline{\Rm}$ is the radial compactification of $Z_n$ with boundary defining function $1/|p|$. With such notation introduced, the results in \cite{Katsevich2001} can be described as follows: 

\smallskip
$\bullet$ Theorem 1 in \cite{Katsevich2001} states that for any sequence ${\bf M}$ as above, $R^*$ maps, injectively, functions in $\A_{\phg}^{E({\bf M})}(\overline{Z_n})$ to functions in $\A_{\phg}^{E({\bf J})}(\overline{\Rm^n})$ for some sequence ${\bf J}$ that may depend on ${\bf M}$. An inversion formula is also provided on such elements. 

\smallskip
$\bullet$ Theorem 2 in \cite{Katsevich2001} provides sharp descriptions of the images, under $R^*$, of the smaller space ${\cal S}(Z_n)$\footnote{the Schwartz space of smooth functions with rapid decay on $Z_n$, also described as the space of smooth functions on $\overline{Z_n}$ vanishing to infinite order at the boundary, denoted $\dot{C}^\infty(\overline{Z_n})$} and the even smaller space $C_c^\infty(Z_n)$. As mentioned in the beginning of Section \ref{sec:forward_backproj}, backprojection operators generally produce boundary behavior even when applied to functions supported away from the boundary. As an example of this principle, here $R^* ({\cal S}(Z_n))$ (in particular $R^* (C_c^\infty(Z_n))$) produces elements in $(1/|x|) C^\infty(\overline{\Rm^n})$. 

\smallskip
$\bullet$ In light of this last comment, for $\mu\in {\cal S}(Z_n)$, Proposition 1 in \cite{Katsevich2001} gives the explicit expression of the Taylor coefficients, off of $\partial (\overline{\Rm^n})$, of $f = R^* \mu$ in terms of integrals of $\mu$.

The proof techniques are explicit computations for the asymptotic expansions of oscillatory integrals, along with Mellin-based arguments to compute index sets and coefficients. This is similar in spirit to what is done in Section \ref{sec:bfib}.  



\section{Isomorphism properties for normal operators on simple manifolds} \label{sec:isomorphism}

Now that settings have been given where to formulate forward mapping properties of $I_0$ and $I_0^\sharp$, it is natural to consider (weighted) compositions of both operators that make good candidates for invertible operators. We first do this in Section \ref{sec:consequences} and formulate some conjecture (Conjecture \ref{conj:isom} below), and recall the contexts in which past sections (notably Sec. \ref{sec:EuclDisk}) show that the conjecture is true on simple geodesic disks of constant curvature. In Section \ref{sec:BdM}, we then cover a general isomorphism result that is particularly adapted to the normal operator $I_0^\sharp I_0$, based on the Boutet de Monvel calculus and its generalizations to more general transmission conditions.

\subsection{Consequences of the forward mapping properties and a conjecture} \label{sec:consequences}

Some interesting consequences can be deduced from the forward mapping theorems above, specifically, Proposition \ref{prop:fwdweightedXray}, Theorems \ref{thm:Calphamapping} and \ref{thm:I0starIDX}. 

\paragraph{Consequences of Proposition \ref{prop:fwdweightedXray}.} An immediate corollary of Proposition \ref{prop:fwdweightedXray} for $I_0 := {\cal I} \circ \pi^*$ is that on $(M,g)$ a convex, nontrapping Riemannian manifold, for any $\gamma>-1$, $I_0\colon d^\gamma C^\infty(M) \to \tau^{2\gamma+1} C_{\alpha,+}^\infty(\partial_+ SM)$ is a well-defined map. Combining this with \eqref{eq:PUI0sharp}, we can get a continuous family of ``normal'' versions of $I_0$ mapping $C^\infty(M)$ into itself: 
\begin{corollary}\label{cor:gammaconj}
	Let $(M,g)$ be a convex, non-trapping Riemannian manifold, and define $\tau, d$ as above. Then for any $\gamma>-1$, the operator $I_0^\sharp \tau^{-2\gamma-1} I_0 d^\gamma$ maps $C^\infty(M)$ into itself. 
\end{corollary}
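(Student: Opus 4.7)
The plan is a direct chaining of Proposition \ref{prop:fwdweightedXray} and the Pestov-Uhlmann inclusion \eqref{eq:I0sharpPU}, with the intermediate weight $\tau^{-2\gamma-1}$ playing the precise role of cancelling the boundary vanishing produced by the forward X-ray transform, so that the resulting function lies in the natural Pestov-Uhlmann domain for $I_0^\sharp$.

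First I would fix $f \in C^\infty(M)$ and $\gamma > -1$, and write $I_0 = I \circ \pi^*$ with $d$ extended to $SM$ by pullback under $\pi$. Then $d^\gamma \pi^* f$ belongs to $d^\gamma C^\infty(SM)$, which is exactly the domain hypothesis of Proposition \ref{prop:fwdweightedXray}, with the condition $\gamma>-1$ matching the integrability condition there. Proposition \ref{prop:fwdweightedXray} then yields $I_0 d^\gamma f = I(d^\gamma \pi^* f) \in \tau^{2\gamma+1} C_{\alpha,+}^\infty(\partial_+ SM)$. By the very definition of this weighted space, there exists $h \in C_{\alpha,+}^\infty(\partial_+ SM)$ with $I_0 d^\gamma f = \tau^{2\gamma+1} h$, so multiplication by $\tau^{-2\gamma-1}$ returns exactly $\tau^{-2\gamma-1} I_0 d^\gamma f = h \in C_{\alpha,+}^\infty(\partial_+ SM)$.

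Finally, I would apply the Pestov-Uhlmann inclusion \eqref{eq:I0sharpPU}, which gives $I_0^\sharp h \in C^\infty(M)$, and hence $I_0^\sharp \tau^{-2\gamma-1} I_0 d^\gamma f \in C^\infty(M)$, which is the desired conclusion. There is no substantive obstacle here: the statement is a formal consequence of the two previously established mapping properties, and the only bookkeeping points are the canonical extension of $d$ from $M$ to $SM$ and the observation that the matching of weights $\tau^{2\gamma+1}$ and $\tau^{-2\gamma-1}$ is an exact cancellation rather than merely a continuous operation on a larger space.
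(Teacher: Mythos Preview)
Your proposal is correct and follows essentially the same argument as the paper: apply Proposition~\ref{prop:fwdweightedXray} (via $I_0 = I\circ\pi^*$) to obtain $I_0 d^\gamma f \in \tau^{2\gamma+1} C_{\alpha,+}^\infty(\partial_+ SM)$, cancel the weight, and then invoke the Pestov--Uhlmann inclusion \eqref{eq:I0sharpPU}. The paper presents this as an immediate corollary in exactly the same two steps, so there is nothing to add.
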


\paragraph{Consequences of Theorems \ref{thm:Calphamapping} and \ref{thm:I0starIDX}.} Further conclusions can be drawn by successively applying Theorems \ref{thm:Calphamapping} and \ref{thm:I0starIDX}:
\begin{enumerate}
    \item Under $I_0^\sharp I_0$, we get the following infinite sequence of Fr\'echet spaces mapped into one another
	\begin{align*}
	    d^{-1/2}C^\infty(M) \to C^\infty(M) \to F_1 \to F_2 \to \dots,
	\end{align*}
	where $F_k := \sum_{\ell=0}^k (\rho\log \rho)^\ell C^\infty(M)$. Moreover, one may find that such log terms genuinely appear at every step. This can be thought of as a somewhat undesireable feature of the operator $I_0^\sharp I_0$.
    \item Under $I_0^\sharp \frac{1}{\tau} I_0$, one may find that $C^\infty(M)$ gets mapped into itself, while some  other spaces can make a 2-cycle under iteration, namely
	\begin{align*}
	    C^\infty(M) + \log \rho\ C^\infty(M) \to C^\infty(M) + \rho^{1/2} C^\infty(M) \to C^\infty(M) + \log \rho\ C^\infty(M).
	\end{align*}
    \item As a generalization of Corollary \ref{cor:gammaconj}: Fixing $\bt$ an $\alpha$-bdf for $\partial_+ SM$, $\rho$ a bdf for $M$ and a real number $\gamma>-1$, one may use Theorems \ref{thm:Calphamapping} and \ref{thm:I0starIDX} to show that the operator $I_0^\sharp \bt^{-2\gamma-1} I_0 \rho^\gamma$, a bounded and self-adjoint operator on $L^2(M, \rho^\gamma\ dVol)$, maps $C^\infty(M)$ into itself. 
\end{enumerate} 

In light of the last observation, the following conjecture was formulated in \cite{Mazzeo2021}: 

\begin{conjecture}\label{conj:isom}
	Let $(M,g)$ be a simple Riemannian manifold with boundary defining function $\rho$, and fix $\gamma>-1$. Then there exists an $\alpha$-bdf $\bt$ for $\partial_+ SM$ such that the operator $I_0^\sharp \bt^{-2\gamma-1} I_0 \rho^\gamma$ is an isomorphism of $C^\infty(M)$. 
\end{conjecture}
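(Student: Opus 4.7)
The plan is to take $\bt = \tau$, the exit time (an $\alpha$-bdf for $\partial_0 SM$ by Sec.~\ref{sec:cvx}), and to show that the operator
\begin{align*}
	P_\gamma := I_0^\sharp \tau^{-2\gamma-1} I_0 \rho^\gamma
\end{align*}
is an isomorphism of $C^\infty(M)$. That $P_\gamma$ sends $C^\infty(M)$ into itself is Corollary~\ref{cor:gammaconj}, so the content is invertibility. The route I would pursue is to exhibit $P_\gamma$ as an elliptic operator in (an extension of) the H\"ormander--Grubb $\mu$-transmission pseudodifferential calculus with $\mu = \gamma$, derive Fredholm mapping properties between the associated transmission Sobolev spaces, and then upgrade to isomorphism using self-adjointness together with the injectivity of $I_0$ on simple manifolds. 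The case $\gamma = -\tfrac{1}{2}$, for which $\tau^{-2\gamma-1} = 1$, is \eqref{eq:MNPmapprop} from \cite{Monard2017}, and it serves as the blueprint.

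The first step is interior ellipticity: on $M^{int}$, $I_0^\sharp I_0$ is a classical elliptic $\Psi$DO of order $-1$ under the simplicity hypothesis, and the weights $\rho^\gamma$, $\tau^{-2\gamma-1}$ are smooth and positive away from $\partial M$ and $\partial_0 SM$ respectively, so $P_\gamma$ is elliptic of order $-1$ in the interior. The second, and more delicate, step is identifying the boundary structure as a $\gamma$-transmission. This is precisely what Theorems~\ref{thm:Calphamapping} and~\ref{thm:I0starIDX} were designed to provide: Theorem~\ref{thm:Calphamapping} applied to $f \in C^\infty(M)$ gives $I_0(\rho^\gamma f) \in \tau^{2\gamma+1} C^\infty_{\alpha,+}(\partial_+ SM)$, hence $\tau^{-2\gamma-1} I_0 \rho^\gamma \colon C^\infty(M) \to C^\infty_{\alpha,+}(\partial_+ SM)$; Theorem~\ref{thm:I0starIDX}, together with its cancellation-of-poles mechanism, shows that $I_0^\sharp$ then maps this class back into $C^\infty(M)$ without generating any $\rho^z (\log\rho)^k$ term outside of the nonnegative-integer Taylor range; and the explicit coefficient \eqref{eq:I0BottomTerm} identifies the leading boundary symbol of $P_\gamma$ with the expected $\gamma$-transmission one. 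Granted these inputs, the Grubb calculus produces a two-sided parametrix for $P_\gamma$, yielding Fredholm mappings $P_\gamma \colon H^{s,(\gamma)}(M) \to H^{s+1,(\gamma)}(M)$ on the corresponding scale, whose intersection over $s$ is $C^\infty(M)$. Injectivity follows from the duality $(I_0 \rho^\gamma)^\ast = I_0^\sharp \tau^{-2\gamma-1}$ between the natural weighted $L^2$-spaces on $M$ and $\partial_+ SM$: $P_\gamma f = 0$ forces $\|\tau^{-\frac{2\gamma+1}{2}} I_0(\rho^\gamma f)\|_{L^2}^2 = 0$, hence $I_0(\rho^\gamma f) = 0$, and the standard injectivity of $I_0$ on simple manifolds then gives $f=0$. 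Since $P_\gamma$ is self-adjoint on $L^2(M, \rho^\gamma dV_g)$ with nonnegative principal symbol, its Fredholm index vanishes, and injectivity promotes to isomorphism on every Sobolev-type space in the scale; the $C^\infty$-isomorphism follows by intersection.

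The main obstacle is the rigorous verification of the $\gamma$-transmission structure in Step~2: the existing $\mu$-transmission calculi are formulated for $\Psi$DOs directly on a single manifold-with-boundary, whereas $P_\gamma$ is a composition passing through $\partial_+ SM$, itself a manifold-with-boundary with its own geometry. Showing that the composition really lies in (an extension of) Grubb's class, rather than in some less tractable algebra, requires combining the double b-fibration picture of Sec.~\ref{sec:bfib} with the phg cancellation argument in the proof sketch of Theorem~\ref{thm:I0starIDX} to produce a clean normal form at $\partial M$ and to check that the usual composition rule for $\mu$-transmission symbols remains valid here. A secondary subtlety is the borderline behavior as $\gamma \to -1$: just as in the proof of Theorem~\ref{thm:Cinf}, one expects the Sobolev embeddings and symbol estimates to degenerate there, requiring additional care in the statement and scope of the Fredholm scale. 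Once these are in place, the remaining steps (parametrix, Fredholm, self-adjointness, injectivity, intersection) are essentially off-the-shelf.
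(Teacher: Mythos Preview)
The statement you are attempting to prove is labeled a \emph{conjecture} in the paper, and it is open in general. The paper does not contain a proof; it only records the special cases in which the conjecture is known to hold (constant-curvature disks for all $\gamma>-1$ via Theorems~\ref{thm:Cinf} and~\ref{thm:CCD}, and arbitrary simple manifolds for $\gamma=-\tfrac12$ via Theorem~\ref{thm:AOS1}). So there is no ``paper's own proof'' to compare against.

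Your proposed route through a $\gamma$-transmission calculus is a natural guess, since it is exactly what succeeds for $\gamma=-\tfrac12$ in Theorem~\ref{thm:AOS1}. But the mechanism that makes that case work is the \emph{extendibility} property \eqref{eq:extendibility}: $I_0^\sharp I_0$ is the restriction to $M$ of a genuine classical $\Psi$DO $\widetilde{I_0^\sharp I_0}$ defined on an enlarged simple manifold $\widetilde{M}$, and it is the full symbol of \emph{that} ambient operator which one checks satisfies the $-\tfrac12$-transmission condition across $\partial M$. For general $\gamma$, the weights $\rho^\gamma$ and $\tau^{-2\gamma-1}$ are singular at $\partial M$ and $\partial_0 SM$, and there is no evident way to extend $P_\gamma$ across $\partial M$ to a classical $\Psi$DO on a larger manifold. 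Without such an extension, the H\"ormander--Grubb $\mu$-transmission framework has no object to act on: that theory is formulated for restrictions of ambient $\Psi$DOs, not for abstract compositions passing through an auxiliary manifold with boundary. You correctly flag this as the ``main obstacle,'' but it is not a technicality to be filled in --- it is the entire content of the open problem. Theorems~\ref{thm:Calphamapping} and~\ref{thm:I0starIDX} give the forward mapping $P_\gamma\colon C^\infty(M)\to C^\infty(M)$ (this is Corollary~\ref{cor:gammaconj}), but they do not place $P_\gamma$ in any calculus admitting a parametrix construction.

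A secondary issue: your claimed Fredholm scale $P_\gamma\colon H^{s,(\gamma)}(M)\to H^{s+1,(\gamma)}(M)$ with common intersection $C^\infty(M)$ cannot be right as written. In the $\gamma=-\tfrac12$ case the domain and codomain are \emph{different} scales, $H^{-1/2(s)}(M)\to H^{s+1}(M)$, with intersections $d^{-1/2}C^\infty(M)$ and $C^\infty(M)$ respectively; the $C^\infty\to C^\infty$ statement of the conjecture only arises after conjugating by $\rho^{-1/2}$. For general $\gamma$ one would either need two distinct scales with the correct intersections, or a single non-standard scale like the $\wtH^{s,\gamma}$ of \eqref{eq:Hsgamma} adapted to $P_\gamma$ --- and on a general simple manifold no such scale is presently available (this is precisely open question~1 in Sec.~\ref{sec:perspectives}).
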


So far, Conjecture \ref{conj:isom} is proved true in the following cases: 
\begin{itemize}
	\item On $M$ the Euclidean unit disk with $\bt = \mu = \frac{\tau}{2}$ and any $\gamma>-1$. This is Theorem \ref{thm:Cinf}. 
	\item On simple geodesic disks in constant curvature models (as in Sec. \ref{sec:CCDs} above) and any $\gamma>-1$. This is Theorem \ref{thm:CCD}, combining the previous case with the property of projective equivalence. 
	\item On any simple Riemannian manifold with boundary, for $\gamma=-1/2$ (no need to specify $\bt$ since $\bt^{-2\gamma-1}\equiv 1$ in this case). 
\end{itemize}

In the next section, we now cover the last case just mentioned, whose proof is methodologically very different from that of the previous cases.

\subsection{The isomorphism $I_0^\sharp I_0\colon d^{-1/2}C^\infty(M) \to C^\infty(M)$ via $-1/2$ transmission condition} \label{sec:BdM}

The first isomorphism result, preceding Conjecture \ref{conj:isom}, was concerning the operators $I_0^\sharp I_0$ on {\em any} simple manifold, combining the extendibility property of that operator with the theory of transmission conditions. The statement, slightly reformulated in the present notation, reads as follows.

\begin{theorem}[Theorem 4.4 in \cite{Monard2017}] \label{thm:AOS1}
    Let $(M,g)$ be a simple manifold of dimension $\ge 2$, and let $d_M$ be a boundary defining function for $M$. Then for any $s>-1$, the operator 
    \begin{align*}
	    I_0^\sharp I_0 \colon H^{-1/2 (s)}(M) \to H^{s+1} (M)
    \end{align*}
    is a homeomorphism. Moreover, $I_0^\sharp I_0\colon d_M^{-1/2} C^\infty(M)\to C^\infty(M)$ is an isomorphism. 
\end{theorem}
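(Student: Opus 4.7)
The plan is to realize $I_0^\sharp I_0$ as the restriction to $M$ of an elliptic pseudodifferential operator defined on a simple enclosing manifold, and then verify that this extended operator lies in the class of $\Psi$DOs satisfying a Hörmander $-1/2$-transmission condition across $\partial M$. Once this is established, the Sobolev mapping statement will follow directly from Grubb's extension of the Boutet de Monvel calculus, and the isomorphism property will come from combining ellipticity, injectivity of $I_0$ on simple manifolds, and self-adjointness.

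First I would embed $M$ into a slightly larger simple manifold $\widetilde{M}$ with $\tilde{g}|_M = g$, so that on $\widetilde{M}^{int}$ the normal operator $\widetilde{I_0^\sharp I_0}$ is a classical elliptic $\Psi$DO of order $-1$, with the compatibility $(\widetilde{I_0^\sharp I_0}(e_M f))|_M = I_0^\sharp I_0 f$ already recalled in \eqref{eq:SUstab} and surrounding discussion. The Schwartz kernel of $\widetilde{I_0^\sharp I_0}$ is modelled on the Riesz potential $c_n d_g(x,y)^{-(n-1)}$ near the diagonal, which gives explicit access to its homogeneous symbol expansion. The crucial step is then to verify, at any boundary point $x_0 \in \partial M$ and any conormal codirection $\xi_0 \in N^*_{x_0}\partial M$, that the classical symbol $p \sim \sum_{j \ge 0} p_{-1-j}$ satisfies the parity relations
\begin{align*}
p_{-1-j}(x_0, -\xi_0) \;=\; e^{i\pi(-1/2 - j)}\, p_{-1-j}(x_0, \xi_0), \qquad j \ge 0,
\end{align*}
which constitute the $\mu$-transmission condition of Hörmander for $\mu = -1/2$. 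At the principal level this is a direct consequence of the explicit Riesz kernel structure; for lower-order terms one uses the geometric form of the amplitude produced by the geodesic parametrization of $I_0^\sharp I_0$.

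Granting the transmission condition, Grubb's mapping theorem yields that $r^+\widetilde{I_0^\sharp I_0} e^+$ is continuous from $H^{-1/2(s)}(M)$ to $H^{s+1}(M)$ for every $s>-1$, which is the forward statement. For the homeomorphism property, I would build a parametrix $Q$ within the $(-1/2)$-transmission calculus, inverting the principal symbol modulo regularizing terms, so that $Q \circ I_0^\sharp I_0 = \mathrm{id} + R$ with $R$ compact between the stated spaces. This establishes the Fredholm property of $I_0^\sharp I_0\colon H^{-1/2(s)}(M) \to H^{s+1}(M)$. Injectivity of $I_0$ on simple manifolds (together with the fact that $\ker(I_0^\sharp I_0) \subset \ker I_0$ on $L^2(M)$, and a standard elliptic-regularity argument inside the transmission calculus to propagate regularity from the possibly singular domain space down to $L^2$) rules out the kernel. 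Self-adjointness of $I_0^\sharp I_0$ on $L^2(M)$ identifies the cokernel with the kernel after duality of the Hörmander scales, and so the cokernel is trivial as well. The open mapping theorem then upgrades Fredholmness to the homeomorphism claim.

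The $C^\infty$ statement will be extracted by passing to the intersection over $s$, using the characterizations $d_M^{-1/2}C^\infty(M) = \bigcap_{s>-1} H^{-1/2(s)}(M)$ and $C^\infty(M) = \bigcap_{s} H^{s+1}(M)$, which are part of the general theory of the Hörmander transmission spaces. The main obstacle I anticipate is the verification of the $-1/2$-transmission condition for the lower-order terms in the symbol expansion of $\widetilde{I_0^\sharp I_0}$ at conormal covectors to $\partial M$: while the leading-order parity is transparent from the Riesz model, controlling all subprincipal terms requires a careful local parametrization of the geodesic flow near the glancing region and an invariant expression for the amplitude in boundary-normal coordinates. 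Every other step (ellipticity, parametrix construction, Fredholm reduction, injectivity) is then a packaged application of known machinery.
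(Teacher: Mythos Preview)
Your outline matches the paper's roadmap almost step for step: extend to a simple neighborhood $\widetilde M$, verify the $-1/2$ transmission condition for the full symbol of $\widetilde{I_0^\sharp I_0}$, invoke Grubb's theory to obtain Fredholmness on $H^{-1/2(s)}(M)\to H^{s+1}(M)$, and then kill kernel and cokernel using injectivity of $I_0$ and duality. The $C^\infty$ statement as an intersection over $s$ is also how the paper packages it.

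One concrete error: the parity factor you wrote is wrong. For an operator of order $-1$ and $\mu=-1/2$, the $\mu$-transmission condition reads $p_{-1-j}(x,-\nu_x)=(-1)^j p_{-1-j}(x,\nu_x)$, i.e.\ the factor is $e^{i\pi(m-2\mu-j)}=e^{-i\pi j}=(-1)^j$, not $e^{i\pi(-1/2-j)}$. Your version introduces an extra $-i$ which, for the real-valued symbols at hand, would force all terms to vanish. The paper in fact checks a stronger statement (\emph{evenness}): $p_{-1-j}(x,-\xi)=(-1)^j p_{-1-j}(x,\xi)$ holds for \emph{all} $\xi$, not just conormal ones, and this makes the verification cleaner than the ``glancing-region parametrization'' you anticipate as the obstacle.

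A minor difference in the injectivity step: rather than running elliptic regularity internally to the transmission calculus, the paper shows that a putative kernel element $f\in d_M^{-1/2}C^\infty(M)$ is actually smooth up to $\partial M$ by extending both $f$ and the operator to $\widetilde M$ and applying ordinary interior elliptic regularity there, after which classical injectivity of $I_0$ on simple manifolds applies. Your route via transmission-calculus regularity should also work, but the extension argument is what the paper actually does.
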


In the statement, $H^{s+1}(M)$ is the classical Sobolev space of order $s+1$, and $H^{-1/2(s)}(M)$ is H\"ormander's $(-1/2)$-transmission Sobolev space of order $s$. The main advantage of working with $I_0^\sharp I_0$, is that it is an {\em extendible} in the following sense: if $(\widetilde{M},\tilde{g})$ is a simple neighborhood of $M$ (with $\tilde{g}|_M = g$), and if one were to define the analogue $\widetilde{I_0^\sharp I_0}$ on $L^2(\widetilde{M})$, then one has the property that 
\begin{align}
    r_M \circ \widetilde{I_0^\sharp I_0} \circ e_M = I_0^\sharp I_0,
    \label{eq:extendibility}
\end{align}
where $e_M$ is the extension-by-zero operator, and $r_M$ is the restriction-to-$M$ operator. Such a situation is favorable to address boundary issues, in particular by translating solvability issues of the equation $I_0^\sharp I_0 u = f$ in terms of symmetry properties (a.k.a. transmission conditions) of the full symbol of (the pseudodifferential operator) $\widetilde{I_0^\sharp I_0} $ across $\partial M$. This was the original context to Boutet de Monvel's original transmission condition in \cite{Monvel1971}, providing a way to getting Fredholmness of equations of the form $Pu = f$ on a domain with boundary when $P$ was an extendible \PsiDO. Such a theory was then enriched (by, e.g., H\"ormander, Vishik-Eskin, and Grubb) to more general transmission conditions, which occur in the present case and also in the case of fractional Laplacians (see \cite{Monard2017} for more detail). 

\begin{proof}[Proof of Theorem \ref{thm:AOS1} (roadmap)]
    The proof of Theorem \ref{thm:AOS1} goes as follows: 
    \begin{itemize}
	    \item One first shows that $I_0^\sharp I_0$, in all contexts of the statement of Theorem \ref{thm:AOS1}, is a Fredholm operator. This boils down to checking that the full symbol of the extended operator $P = \widetilde{I_0^\sharp I_0}$ (a \PsiDO of order $-1$ in the interior of $\widetilde{M}$, with full classical symbol $p\sim \sum_{j=0} p_j(x,\xi)$, with $p_j$ homogeneous of degree $-1-j$ in $\xi$) satisfies a $-1/2$ transmission condition across $\partial M$ in the sense that, for every $x\in \partial M$, and $j\ge 0$, $p_j(x,\nu_x) = (-1)^j p_j(x,-\nu_x)$. In fact this equality holds even more strongly for any $\xi\in T_x \widetilde{M}$ (not just $\nu_x$), a feature referred to as the {\em evenness} of the symbol, see \cite[Lemma 4.7]{Monard2017}.
	\item One then has to show that $I_0^\sharp I_0$ has (i) trivial kernel and (ii) trivial co-kernel. Item (i) can be proved upon showing that elements in $d_M^{-1/2} C^\infty(M) \cap \ker I_0^\sharp I_0$ are in fact smooth up to the boundary so that classical injectivity results can be invoked. Smoothess up to the boundary is obtained by extending both $f$ and $I_0^\sharp I_0$ to a neighbourhood of $M$, and exploiting elliptic regularity there. Then item (ii) exploits duality arguments along with item (i).
    \end{itemize}    
\end{proof}


\section{Results for attenuated X-ray transforms} \label{sec:atrt}

As an example of how the results of the previous sections can help answer Q1 and Q2 for generalizations of the X-ray transform, we now treat the example of the {\em attenuated X-ray transform} (a.k.a the X-ray transform with Higgs field). Such a transform has a long history \cite{Novikov2002a,Novikov2002,Salo2011,Paternain2012,Monard2015,Assylbekov2017}, and the methods and ideas presented here appear more recently in \cite{Monard2021,Bohr2021a}. After covering preliminaries in Section \ref{sec:atrt_prelims}, Section \ref{sec:atrt_fwd} discuss recent forward mapping properties on general convex, non-trapping manifolds, while Section \ref{sec:atrt_perturb} presents all results that can be established in the case where the background geometry already benefits from the scales of Hilbert spaces \eqref{eq:SobolevZernike} and \eqref{eq:scaledata} (or \eqref{eq:scale_iso}), which already form a sharp setting for the {\em un}-attenuated X-ray transform.

\subsection{Preliminaries}\label{sec:atrt_prelims}

 Given an integer $m\in \Nm$ and an attenuation matrix (a.k.a. Higgs field) $\Phi\in C^\infty(M; \Cm^{m\times m})$, one may define the {\em attenuated X-ray transform} 
\begin{align*}
	I_\Phi \colon C_c^\infty(M^{int}, \Cm^m) \to C_c^\infty( (\partial_+ SM)^{int}, \Cm^m)
\end{align*}
given by $I_\Phi f:= u|_{\partial_+ SM}$, where $u \colon SM\to \Cm^m$ solves the transport problem 
\begin{align}
	Xu + \Phi u = -f \quad (SM), \qquad u|_{\partial_- SM} = 0.
	\label{eq:utransport}
\end{align}
There is a convenient way to factor $I_\Phi$ in terms of the unattenuated X-ray transform \eqref{eq:I}. Namely, one may construct (see, e.g., \cite[Sec. 5.2.2]{Monard2019}) a smooth integrating factor for $\Phi$, i.e. a smooth solution $R_\Phi\colon SM\to GL(m,\Cm)$ to the transport equation $X R_\Phi + \Phi R_\Phi = 0$ on $SM$. In terms of $R_\Phi$, the transform $I_\Phi$ can be written as follows:
\begin{align}
	I_\Phi f (\x,v) := R_\Phi \int_0^{\tau(\x,v)} R_\Phi^{-1}(\varphi_t(\x,v)) f(\pi (\varphi_t(\x,v)))\ dt = R_\Phi\cdot I \left( R_\Phi^{-1} f \right)(\x,v). 
	\label{eq:atXrt}
\end{align}
With this factorization, this operator can be made bounded in the setting $L^2(SM, \Cm^m) \to L^2(\partial_+ SM, \frac{\mu}{\tau}\ d\Sigma^{2n-2})$, with adjoint 
\begin{align}
	I_\Phi^* = \pi_\star R_\Phi^{-*} F^* R_\Phi^* \frac{1}{\tau},
	\label{eq:atXrtstar}
\end{align}
where $F^*$ is ``pullback by $F$'' and $R_\Phi^*$ is the pointwise Hermitian adjoint of $R_\Phi(\x,v)$. 

\subsection{Forward mapping properties of $I_\Phi$ on convex, non-trapping manifolds}\label{sec:atrt_fwd}

The following forward mapping properties of $I_\Phi^* I_\Phi$ can be rather easily derived from Proposition \ref{prop:fwdweightedXray}.

\begin{theorem}[Theorem 6.2 in \cite{Monard2021}]\label{thm:MNP_fwd} 
	Let $(M,g)$ be a non-trapping manifold with strictly convex boundary and let $\Phi \in C^\infty(M, \Cm^{m\times m})$. The operator $I_\Phi^* I_\Phi$ maps $C^\infty(M, \Cm^m)$ into itself. 
\end{theorem}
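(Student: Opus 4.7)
My plan is to reduce Theorem~\ref{thm:MNP_fwd} to two unattenuated results already established: Proposition~\ref{prop:fwdweightedXray} for the forward mapping of $I$, and the Pestov-Uhlmann characterization \eqref{eq:PUcharac} of $C_{\alpha,+}^\infty(\partial_+ SM)$. The vehicle for the reduction is the factorization \eqref{eq:atXrt}--\eqref{eq:atXrtstar}, provided the integrating factor $R_\Phi$ is chosen with good boundary behavior.

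First I would fix a smooth integrating factor $R_\Phi : SM \to \mathrm{GL}(m, \Cm)$ normalized so that $R_\Phi|_{\partial_+ SM} = \mathrm{Id}$. Given this, for $f \in C^\infty(M, \Cm^m)$ the lift $R_\Phi^{-1} \pi^* f$ lies in $C^\infty(SM, \Cm^m)$, and Proposition~\ref{prop:fwdweightedXray} applied componentwise (with $\gamma = 0$) gives $I(R_\Phi^{-1} \pi^* f) \in \tau\, C_{\alpha,+}^\infty(\partial_+ SM, \Cm^m)$. By the normalization, the factorization \eqref{eq:atXrt} reduces to $I_\Phi f = I(R_\Phi^{-1} \pi^* f)$, so $u := \tau^{-1} I_\Phi f \in C_{\alpha,+}^\infty(\partial_+ SM, \Cm^m)$. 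Using $R_\Phi^*|_{\partial_+ SM} = \mathrm{Id}$ together with the identity $\tau \circ F = \tilde\tau \in C^\infty(SM)$, the adjoint formula \eqref{eq:atXrtstar} collapses to
\begin{align*}
    I_\Phi^* I_\Phi f(\x) = \int_{S_\x} R_\Phi^{-*}(\x, v) \cdot (u \circ F)(\x, v) \, dS(v).
\end{align*}
The factor $R_\Phi^{-*}$ is smooth on $SM$ by construction, and $u \circ F \in C^\infty(SM, \Cm^m)$ by \eqref{eq:PUcharac}. Fiber integration over the compact sphere $S_\x$ then produces a smooth function of $\x \in M$.

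The main obstacle is the first step: the existence of a smooth integrating factor $R_\Phi$ on $SM$ with $R_\Phi|_{\partial_+ SM} = \mathrm{Id}$, equivalently, the smoothness on $SM$ of the transport propagator from $F(\x, v)$ to $(\x, v)$. Standard ODE theory only delivers smoothness along each individual geodesic; joint smoothness across the glancing set $\partial_0 SM$, where geodesic chords degenerate, is the delicate point. One proceeds as in \cite[Sec. 5.2.2]{Monard2019} by embedding $(M, g, \Phi)$ into a slightly larger non-trapping convex manifold and transporting the boundary data across the extension, a construction of fold-theoretic flavor parallel to the proof of Theorem~\ref{thm:PU}. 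If that specific normalization turns out to be inconvenient, an alternative is to take any smooth $R_\Phi$ on $SM$ and verify instead that the Hermitian square $W := R_\Phi^* R_\Phi$ satisfies $W|_{\partial_+ SM} \in C_{\alpha,+}^\infty$, so that $(Wu) \circ F \in C^\infty(SM)$; this reduces to the same smoothness question at $\partial_0 SM$. Once this technical point is secured, the argument above is a direct combination of Proposition~\ref{prop:fwdweightedXray} and the Pestov-Uhlmann characterization.
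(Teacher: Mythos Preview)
Your plan is correct and is exactly the paper's approach: factor $I_\Phi$ and $I_\Phi^*$ through a smooth integrating factor as in \eqref{eq:atXrt}--\eqref{eq:atXrtstar}, apply Proposition~\ref{prop:fwdweightedXray} with $\gamma=0$ to the unattenuated piece, and finish with the Pestov--Uhlmann fact that $F^*$ carries $C_{\alpha,+}^\infty(\partial_+ SM)$ into $C^\infty(SM)$. The paper's one-line proof simply packages these three ingredients; you have unpacked them correctly.

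One small correction that does not affect the argument: the identity $\tau\circ F=\tilde\tau$ is false. In fact $\tau(F(\x,v))=\tau(\x,v)+\tau(\x,-v)$ (the full chord length), whereas $\tilde\tau(\x,v)=\tau(\x,v)-\tau(\x,-v)$; the former is \emph{not} smooth across $\partial_0 SM$. Fortunately you never need it: the division by $\tau$ is performed on $\partial_+ SM$ \emph{before} pulling back, yielding $u=\tau^{-1}I_\Phi f\in C_{\alpha,+}^\infty(\partial_+ SM,\Cm^m)$ directly, after which $u\circ F\in C^\infty(SM,\Cm^m)$ by \eqref{eq:PUcharac}. So simply drop the reference to $\tilde\tau$. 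As for the ``obstacle'' you flag---smoothness on $SM$ of the integrating factor normalized at $\partial_+ SM$---this is precisely the content the paper defers to \cite[Sec.~5.2.2]{Monard2019}, and your proposed embedding-into-a-larger-manifold argument is the standard resolution.
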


\begin{proof} The proof follows from the factorizations \eqref{eq:atXrt} and \eqref{eq:atXrtstar} and the mapping properties of each operator, also noting that $F^*$ maps $C_{\alpha,+}^\infty(\partial_+ SM)$ into $C^\infty(SM)$. 	
\end{proof}

In light of Theorem \ref{thm:MNP_fwd}, one may ask what contexts, on $(M,g)$ (e.g., dimension, geometric restrictions such as simplicity) and $\Phi$ (e.g., regularity), may suffice to make $I_\Phi^* I_\Phi$ an {\em isomorphism} of $C^\infty(M, \Cm^m)$.



\subsection{Refinements on the Euclidean disk} \label{sec:atrt_perturb}

On a general Riemannian manifold, the scales \eqref{eq:SobolevZernike}, \eqref{eq:scaledata} and \eqref{eq:scale_iso} have no analogue at the moment. However, on the Euclidean disk, they serve as an appropriate functional setting to capture many properties of attenuated X-ray transforms. In particular, they allow to obtain forward Sobolev mapping properties for $I_\Phi$ (covered in Sec. \ref{sec:fwdIPhi}), isomorphism properties for $I_\Phi^* I_\Phi$ (covered in Sec. \ref{sec:NPhi}), and sharp stability estimates for $I_\Phi$ (covered in Sec. \ref{sec:stabIPhi}).

\begin{remark}
	It is expected that all schemes of proof below should also work on constant curvature disks {\it mutatis mutandis}, since those manifolds also have similar Sobolev scales as \eqref{eq:SobolevZernike}, \eqref{eq:scaledata} and \eqref{eq:scale_iso}. These are not recorded in the literature however.	
\end{remark}

\subsubsection{Forward mapping properties of $I_\Phi$ in Hilbert scales}\label{sec:fwdIPhi}

We begin with a result in \cite{Bohr2021a}, establishing forward mapping estimates for attenuated X-ray transforms on the Euclidean disk in the Sobolev scales \eqref{eq:SobolevZernike} and \eqref{eq:scale_iso}, both extended to $\Cm^m$-valued functions in the obvious way, for instance for the scale \eqref{eq:SobolevZernike}: 
\begin{align*}
	\|f\|_{\wtH^s(\Dm,\Cm^m)}^2 = \sum_{j=1}^m \|f_j\|_{\wtH^{s}(\Dm)}^2, \qquad f\colon \Dm\to \Cm^m,\qquad f = (f_1,\dots,f_m).
\end{align*}

\begin{theorem}[Theorem 4.1 in \cite{Bohr2021a}]\label{thm:BN_forward}
    Let $s\ge 0$ and suppose $\Phi$ is of regularity $C^k(\Dm)$, where $k = 2\lceil s/2 \rceil$ (the smallest even integer $\ge s$). Then for all $f\in \wtH^s(\Dm,\Cm^m)$, we have
    \begin{align*}
	\|I_\Phi f\|_{H^s(\partial_+ S\Dm)} \le C\cdot \left(1+\|\Phi\|_{C^k(\Dm)}\right)^k \cdot \|f\|_{\wtH^s(\Dm)}
    \end{align*}
    for a constant $C = C(s) >0$. For $s=1$ one may also take $k=1$.
\end{theorem}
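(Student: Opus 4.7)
The plan is to exploit the integrating factor factorization $I_\Phi f = R_\Phi \cdot I(R_\Phi^{-1} \pi^* f)$ from \eqref{eq:atXrt}, together with the sharp Sobolev mapping property of the unattenuated transform encoded in \eqref{eq:I0isom}. For the base case $s=0$ (and $k=0$), I would establish the boundedness $I_\Phi\colon L^2(\Dm,\Cm^m) \to L^2_+(\partial_+ S\Dm,\Cm^m)$ by a direct Santal\'o-type argument applied to the transport solution $u = u_\Phi^f$ of $Xu + \Phi u = -f$, using that $\tau$ is uniformly bounded on $\Dm$ and that $R_\Phi$ and $R_\Phi^{-1}$ are uniformly bounded on $S\Dm$.

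For the main step, even integer $s = 2\ell$ with $k = s$, I would proceed by induction on $\ell$ using the norm equivalence
\[
\|h\|_{H^s(\partial_+ S\Dm)}^2 \sim \|(-T^2 - \partial_\beta^2)^{\ell} h\|_{L^2}^2 + \|h\|_{L^2}^2
\]
and commuting the tangential vector fields $T$ and $\partial_\beta$ across $I_\Phi$. The key computational identity is that, for any vector field $W$ on $S\Dm$ that restricts to a tangential field on $\partial S\Dm$, the lifted solution $u$ satisfies
\[
X(Wu) + \Phi(Wu) = -Wf - [W,X]u - (W\Phi)u,
\]
so that $Wu|_{\partial_+ S\Dm}$ is itself an attenuated X-ray transform whose source involves one derivative of $f$, together with a remainder linear in $\Phi$, $W\Phi$, and $u$. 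Iterating this identity with $W$ chosen among (extensions of) $T$ and $\partial_\beta$ produces $(-T^2 - \partial_\beta^2)^{\ell} I_\Phi f$ as the boundary trace of an attenuated transform whose source has $L^2(S\Dm,\Cm^m)$-norm controlled by $(1 + \|\Phi\|_{C^k})^k \|f\|_{\wtH^s(\Dm)}$. The unattenuated bound $I_0 \colon \wtH^s(\Dm) \to H^{s+1/2}(\partial_+ S\Dm)$ from \eqref{eq:I0isom} absorbs the top-order contribution, and the inductive hypothesis dispatches the lower-order commutator terms.

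The $s=1$ case with $k=1$ is the same commutator argument applied to a single tangential derivative, exploiting $\|h\|_{H^1(\partial_+ S\Dm)}^2 \sim \|h\|_{L^2}^2 + \|Th\|_{L^2}^2 + \|\partial_\beta h\|_{L^2}^2$; fractional $s$ then follows by complex interpolation between consecutive integer cases, which is legitimate since the scales $H^s(\partial_+ S\Dm)$ and $\wtH^s(\Dm)$ are generated by self-adjoint elliptic operators. The main obstacle I anticipate is keeping the dependence on $\|\Phi\|_{C^k}$ strictly polynomial of degree $k$ rather than exponential: a naive Gr\"onwall estimate for the perturbed transport equation produces a factor $e^{C\|\Phi\|_{C^0}\tau}$, and resolving this requires exploiting both the uniformly bounded propagation horizon on $\Dm$ and the observation that each new derivative of $\Phi$ enters the iterated commutator identity linearly, so that after $k$ differentiation steps the overall polynomial growth $(1 + \|\Phi\|_{C^k})^k$ is preserved.
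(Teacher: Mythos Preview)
Your approach and the paper's diverge after the opening factorization. The paper's roadmap does \emph{not} commute vector fields through the transport equation. Instead it stays with the factorization $I_\Phi f = R_\Phi|_{\partial_+ S\Dm}\cdot I(R_\Phi^{-1}\,\pi^* f)$ throughout and reduces everything to two static ingredients: (i) that $R_\Phi^{-1}$ and $R_\Phi|_{\partial_+ S\Dm}$ are multipliers on the scales $\wtH^s(S\Dm)$ and $H^s(\partial_+ S\Dm)$ respectively, with operator norms controlled by finitely many derivatives of $\Phi$ (Lemma 4.7 in \cite{Bohr2021a}); and (ii) a forward mapping estimate for the \emph{unattenuated} transform on the \emph{full unit sphere bundle}, $\|Iu\|_{H^s(\partial_+ S\Dm)}\lesssim_s \|u\|_{\wtH^s(S\Dm)}$, proved via the Zernike SVD of $I$ on $L^2(S\Dm)$ (Lemma 4.6 in \cite{Bohr2021a}). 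No commutation, no induction on $\ell$, no interpolation of $I_\Phi$ itself is needed.

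Your commutator scheme is plausible in spirit but has a real gap at the top-order step. After iterating $X(Wu)+\Phi(Wu)=-Wf-[W,X]u-(W\Phi)u$, the leading source term is $W^{2\ell}(\pi^* f)$, which is a function on $S\Dm$, not on $\Dm$. You then invoke the bound $I_0\colon \wtH^s(\Dm)\to H^{s+1/2}(\partial_+ S\Dm)$ from \eqref{eq:I0isom}, but that estimate only accepts integrands pulled back from $\Dm$; it says nothing about general $L^2(S\Dm)$ sources, and there is no reason for $W^{2\ell}(\pi^* f)$ to be of the form $\pi^* g$ with $\|g\|_{L^2(\Dm)}\lesssim\|f\|_{\wtH^{2\ell}(\Dm)}$ unless your extensions of $T,\partial_\beta$ are chosen very specially. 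What you actually need to close the argument is precisely the paper's ingredient (ii): a mapping bound for $I$ on the $\wtH^s(S\Dm)$ scale, which also handles the lower-order commutator and $(W\Phi)u$ terms uniformly. Once you accept that lemma, the multiplier route is shorter and also yields the polynomial dependence $(1+\|\Phi\|_{C^k})^k$ directly, whereas your Gr\"onwall-based bound on $u$ would force an $e^{c\|\Phi\|_{C^0}}$ factor that bounded propagation time does not remove.
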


\begin{remark}
	In the case where the attenuation is zero, it was seen in Section \ref{sec:EuclDisk} that such a mapping property can be strengthened to $I_0 \colon \wtH^s(\Dm)\to H^{s+\frac{1}{2}} (\partial_+ S\Dm)$. At present it is unclear how to upgrade Theorem \ref{thm:BN_forward} to a $\wtH^s(\Dm) \to H^{s+\frac{1}{2}} (\partial_+ S\Dm)$ forward estimate for general $\Phi$. 	
\end{remark}

\begin{proof}[Roadmap of proof of Theorem \ref{thm:BN_forward}]
	The proof is done by factoring $I_\Phi f$ as in \eqref{eq:atXrt}, namely, $I_\Phi f = R_\Phi I (R_\Phi^{-1} f)$. From there, the proof follows by combining the two key ingredients: 
	\begin{itemize}
		\item When $\Phi$ is regular enough, then $R_\Phi$ (resp. $R_\Phi|_{\partial_+ S\Dm}$) has enough regularity to be a multiplier of $\wtH^s(S\Dm)$ (resp. $H^s(\partial_+ S\Dm)$); see \cite[Lemma 4.7]{Bohr2021a}.
		\item Using Zernike polynomials and the SVD of the X-ray transform $I\colon L^2(S\Dm)\to L^2 (\partial_+ S\Dm)$, one can establish the forward mapping estimate $\|Iu\|_{H^s(\partial_+ S\Dm)}\lesssim_s \|u\|_{\wtH^s(S\Dm)}$ for all $s\ge 0$ and $u\in \wtH^s(S\Dm)$; see \cite[Lemma 4.6]{Bohr2021a}. 
	\end{itemize}
\end{proof}

\subsubsection{Isomorphism properties of $I_\Phi^* I_\Phi$} \label{sec:NPhi}


In this section, let us denote the normal operator $N_\Phi := I_\Phi^* I_\Phi$ for short. Turning Theorem \ref{thm:MNP_fwd} into an isomorphism can be done upon further assuming that $(M,g)$ is the Euclidean unit disk, and $\Theta$ is smooth, compactly supported in $\Dm^{int}$, and skew-hermitian. 

\begin{theorem}[Theorem 6.4 in \cite{Monard2021}] \label{thm:MNP_isom}
	Suppose $(M,g) = (\Dm,e)$, and let $\Phi \in C_c^\infty(\Dm^{int}, \Cm^{m\times m})$ be skew-hermitian. Then the map 
	\begin{align*}
		N_\Phi := I_\Phi^* I_\Phi \colon C^\infty(M,\Cm^m) \to C^\infty(M,\Cm^m)
	\end{align*}
	is an isomorphism. 	
\end{theorem}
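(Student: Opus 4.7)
The plan is a Fredholm perturbation argument around $N_0 := I_0^* I_0$ on the Zernike--Sobolev scale \eqref{eq:SobolevZernike}, extended componentwise to $\Cm^m$-valued functions. By Theorem \ref{thm:Cinf} (case $\gamma=0$), $N_0$ coincides up to a scalar factor with $I_0^\sharp \mu^{-1} I_0$ and is therefore a Hilbert-space isomorphism $\wtH^s(\Dm,\Cm^m) \stackrel{\approx}{\to} \wtH^{s+1}(\Dm,\Cm^m)$ for every $s\ge 0$, as well as an automorphism of $C^\infty(\Dm,\Cm^m)$. Injectivity of $N_\Phi$ is handled by the skew-Hermitian hypothesis: the integrating factor $R_\Phi \in C^\infty(S\Dm)$ is then fiberwise unitary, so $\langle N_\Phi f, f\rangle_{L^2(\Dm,\Cm^m)} = \|I_\Phi f\|^2_{L^2_{\mu/\tau}(\partial_+ S\Dm,\Cm^m)}$, whence $\ker N_\Phi = \ker I_\Phi$. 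The latter is trivial by the known injectivity of the attenuated X-ray transform on the Euclidean disk for skew-Hermitian, compactly supported Higgs fields \cite{Paternain2012,Monard2015}.

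The core of the proof is to show that $W_\Phi := N_\Phi - N_0$ is compact as an operator $\wtH^s(\Dm,\Cm^m) \to \wtH^{s+1}(\Dm,\Cm^m)$ for every $s \ge 0$. Expanding \eqref{eq:atXrt} and \eqref{eq:atXrtstar} with $R_\Phi^{-*} = R_\Phi$ gives
\begin{align*}
    W_\Phi f = \pi_*\bigl( R_\Phi F^* R_\Phi^{-1} \tau^{-1} R_\Phi\, I(R_\Phi^{-1} f) - F^* \tau^{-1} I f \bigr),
\end{align*}
which telescopes into terms of the form $\pi_*[F^* \tau^{-1} I, M]$ pre- and post-composed with smooth matrix-valued multipliers $M$ built from $\{R_\Phi, R_\Phi^{-1}\}$. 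Since $\Phi \in C_c^\infty(\Dm^{int})$, $R_\Phi$ is smooth on $S\Dm$ and equals $\mathrm{Id}_m$ near $\partial_- S\Dm$. In the interior, $N_\Phi$ and $N_0$ are classical elliptic $\Psi$DOs of order $-1$ with identical principal symbol, so $W_\Phi$ is of order $-2$ microlocally. Combining this interior order gain with Theorem \ref{thm:BN_forward}, its $L^2$-dual, and the isometric estimate \eqref{eq:I0isom}, one upgrades to a bound $W_\Phi \colon \wtH^s(\Dm,\Cm^m) \to \wtH^{s+1+\varepsilon}(\Dm,\Cm^m)$ for some $\varepsilon > 0$. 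The compact embedding $\wtH^{s+1+\varepsilon}\hookrightarrow \wtH^{s+1}$, which follows from the spectrum of $\L$ being discrete and diverging to infinity, then yields the required compactness. Factoring $N_\Phi = N_0 \bigl( \mathrm{Id} + N_0^{-1} W_\Phi \bigr)$ as operators $\wtH^s \to \wtH^{s+1}$ and applying the Fredholm alternative together with the injectivity from the first paragraph shows that $N_\Phi\colon \wtH^s(\Dm,\Cm^m) \to \wtH^{s+1}(\Dm,\Cm^m)$ is a Hilbert-space isomorphism for every $s \ge 0$.

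To promote the Sobolev-scale statement to $C^\infty$, I would use $C^\infty(\Dm,\Cm^m) = \bigcap_s \wtH^s(\Dm,\Cm^m)$: given $g \in C^\infty$, solve $N_\Phi f_s = g$ in each $\wtH^s$, and use injectivity to obtain a common solution $f = f_s \in \bigcap_s \wtH^s = C^\infty$. Combined with Theorem \ref{thm:MNP_fwd} (which guarantees $N_\Phi \colon C^\infty \to C^\infty$ is well-defined) and the injectivity from the first paragraph, this completes the proof. The principal obstacle is the compactness step. Since the Zernike--Sobolev scale is governed by the degenerate elliptic operator $\L$ rather than by classical Sobolev regularity, the interior microlocal order reduction does not transfer automatically to a $\wtH^\cdot$-mapping property. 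The delicate point is that $R_\Phi$ is trivial only near $\partial_- S\Dm$, not near all of $\partial S\Dm$, so the telescoped commutator terms generate genuine boundary contributions that must be controlled either through the phg conormal analysis of Section \ref{sec:bfib} or via explicit spectral estimates in the Zernike basis.
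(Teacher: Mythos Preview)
Your high-level strategy matches the paper's exactly: show that $K_\Phi := N_\Phi - N_0$ is compact relative to $N_0$ on the Zernike--Sobolev scale, deduce Fredholmness, invoke injectivity of $I_\Phi$ from \cite{Paternain2012}, and pass to $C^\infty$ via $\cap_k \wtH^k = C^\infty$. The interior argument (equal principal symbols, hence $K_\Phi$ is a \PsiDO\ of order $\le -2$ on $\Dm^{int}$) is also the same.

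The gap is precisely where you locate it: your compactness step is not complete. The sentence ``combining this interior order gain with Theorem~\ref{thm:BN_forward}, its $L^2$-dual, and \eqref{eq:I0isom}, one upgrades to $W_\Phi\colon \wtH^s\to\wtH^{s+1+\varepsilon}$'' does not constitute an argument; none of those ingredients controls the boundary contribution you yourself flag at the end. Your worry that $R_\Phi$ is only trivial near $\partial_- S\Dm$ is the symptom of working on $S\Dm$ rather than on the kernel side.

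The paper resolves this by analyzing the Schwartz kernel of $K_\Phi$ on $\Dm\times\Dm$ directly. The key observation you are missing is that the compact support of $\Phi$ in $\Dm^{int}$ forces the Schwartz kernel of $K_\Phi$ to \emph{vanish identically} in a neighborhood of the corner $\partial\Dm\times\partial\Dm$: for $x,y$ both sufficiently close to $\partial\Dm$, the attenuated and unattenuated kernels agree. One then takes a partition of unity on $\Dm\times\Dm$ separating a neighborhood of the corner from the rest. Near the corner $K_\Phi$ contributes nothing. The remaining pieces have at least one factor compactly supported in $\Dm^{int}$; on compactly supported distributions the $\wtH^k$ and classical $H^k$ topologies are equivalent, so the interior \PsiDO\ order gain translates directly into a $\wtH^k\to\wtH^{k+2}$ bound (see \cite[Lemmas 6.15, 6.17]{Monard2021}). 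This gives compactness without any phg conormal analysis or Zernike-basis spectral estimates; both of your proposed routes are more elaborate than what is actually needed.
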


\begin{proof}[Proof of Theorem \ref{thm:MNP_isom} (roadmap)]
	Since $\cap_{k\in \Nm} \wtH^k(\Dm, \Cm^m) = C^\infty(\Dm, \Cm^m)$ by \cite[Theorem 12]{Monard2019a}, Theorem \ref{thm:MNP_isom} follows upon establishing the more precise statement that, for every $k\in \Nm_0$, 
	\begin{align}
		N_\Phi \colon \wtH^{k}(\Dm, \Cm^m) \to \wtH^{k+1} (\Dm,\Cm^m)
		\label{eq:isomATRT}
	\end{align}
	is an isomorphism; see \cite[Theorem 6.18]{Monard2021}. This is done upon showing that on the Zernike Sobolev scale $\{\wtH^{k}(\Dm,\Cm^m)\}_{k\ge 0}$, the operator $K_\Phi := I_\Phi^* I_\Phi - (I^*_0 I_0) id_{m\times m}$ is a relatively compact perturbation of $(I^*_0 I_0) id_{m\times m}$. To prove this, one must look at the singularities of their Schwartz kernels near the diagonal $\{(\x,\x): \x\in \Dm^{int}\}$, and near the corner $\partial \Dm \times \partial \Dm$ of $\Dm\times \Dm$, see \cite[Sec. 6.4]{Monard2021} for details. Briefly:
	\begin{itemize}
		\item Near the diagonal, the assumption that $\Phi$ is skew-hermitian implies that both (pseudo-differential) operators $I_\Phi^* I_\Phi$ and $(I^*_0 I_0) id_{m\times m}$ have same principal symbol of order $-1$, and as such their difference is a PsiDO of order at most $-2$ while $(I^*_0 I_0) id_{m\times m}$ is of order $-1$.  
		\item Near the corner, one finds that the assumption that $\Phi$ is compactly supported in $\Dm^{int}$ implies that the Schwartz kernel of $K_\Theta$ {\em vanishes identically} in a neighborhood of $\partial\Dm\times \partial\Dm$.
		\item Using a partition of unity on $\Dm\times \Dm$, this helps chop $K_\Phi$ into pieces all of which can be made $\wtH^{k}\to \wtH^{k+2}$ bounded, and hence compact relative to the bounded operator $I_0^*I_0\colon \wtH^k\to \wtH^{k+1}$. That this is true for the 'interior pieces' comes from the first bullet above and the fact that the classical Sobolev and Zernike-Zobolev topologies are equivalent on distributions with fixed compact support in $\Dm^{int}$. See \cite[Lemmas 6.15 and 6.17]{Monard2021} for further details.
	\end{itemize}
	The above steps then imply that $I_\Phi^* I_\Phi\colon \wtH^{k}(\Dm, \Cm^m) \to \wtH^{k+1} (\Dm,\Cm^m)$ is Fredholm. Obtaining isomorphism property  \eqref{eq:isomATRT} then follows from exploiting the previously known injectivity result of attenuated transforms on simple surfaces \cite{Paternain2012}, and dualizing to produce surjectivity via a careful use of the Zernike scale $\{\wtH^{s}(\Dm, \Cm^m)\}_{s\in \Rm}$. 
\end{proof}

A refinement of Theorem \ref{thm:MNP_isom} by Bohr and Nickl in \cite{Bohr2021a} consists in reducing the regularity requirement on $\Phi$ such that $N_\Phi \colon L^2(\Dm,\Cm^m)\to \wtH^1(\Dm,\Cm^m)$ remains an isomorphism, and providing more refined information about the continuity properties of the maps $\Phi\mapsto N_\Phi$ and $\Phi\mapsto N_\Phi^{-1}$. In what follows, given $H_1, H_2$ two Hilbert spaces, we write $\B(H_1,H_2)$ for the space of continuous linear operators between them, equipped with the operator norm; $\B^\times(H_1,H_2)$ is the open subset of its invertible elements. 

\begin{theorem}[Theorem 4.8 in \cite{Bohr2021a}]\label{thm:BNisom} Let $K\subset \Dm$ be compact. 

	(i) For $\Phi\in C_K^2(\Dm,\mathfrak{u}(m))$ the normal operator $N_\Phi$ maps $L^2(\Dm,\Cm^m)$ into $\wtH^1(\Dm,\Cm^m)$. The following map is Lipschitz continuous on bounded sets: 
	\begin{align*}
		\Phi\mapsto N_\phi, \quad \text{ as map }\ C_K^2(\Dm, \mathfrak{u}(m))\to {\cal B} (L^2(\Dm),\wtH^1(\Dm)). 
	\end{align*} 

	(ii) For $\Phi\in C_K^4(\Dm, \mathfrak{u}(m))$ the operator $N_\Phi\colon L^2(\Dm, \Cm^m) \to \wtH^1(\Dm,\Cm^m)$ is an isomorphism. Moreover, the following map is continuous
	\begin{align*}
		\Phi\mapsto N_\phi^{-1}, \quad \text{ as map }\ C_K^4(\Dm, \mathfrak{u}(m))\to {\cal B}^\times (\wtH^1(\Dm), L^2(\Dm)). 
	\end{align*} 
\end{theorem}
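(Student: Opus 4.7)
The proof splits naturally into the boundedness and Lipschitz parts of (i), then the isomorphism and continuous-inversion parts of (ii). For the boundedness $N_\Phi \colon L^2(\Dm,\Cm^m)\to \wtH^1(\Dm,\Cm^m)$, I would begin from the factorization $N_\Phi = I_\Phi^* I_\Phi$ with $I_\Phi = R_\Phi\cdot I \circ R_\Phi^{-1}$ and its adjoint \eqref{eq:atXrtstar}, inserting the key isometric mapping property $I_0 \colon \wtH^s(\Dm) \to H_T^{s+1/2}(\partial_+ S\Dm)$ coming from the Zernike SVD (cf.\ \eqref{eq:I0isom}). The $C^2$-hypothesis is the minimal regularity ensuring that $R_\Phi$ and $R_\Phi^{-1}$ act as multipliers both on $\wtH^1(\Dm)$ and on $H^1(\partial_+ S\Dm)$; this is because $R_\Phi$ solves the linear ODE $XR_\Phi = -\Phi R_\Phi$ along geodesics, so each derivative of $R_\Phi$ costs one derivative of $\Phi$, with an additional derivative absorbed by the adjoint pass.

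For the Lipschitz estimate on bounded sets, the starting point is the ODE-dependence inequality $\|R_{\Phi_1}-R_{\Phi_2}\|_{C^2(S\Dm)}\le C(\|\Phi_i\|_{C^2})\,\|\Phi_1-\Phi_2\|_{C^2(\Dm)}$, again derived from a Gronwall argument applied to the transport equation. One then expands the difference $N_{\Phi_1}-N_{\Phi_2}$ as a sum of four terms, each a product of $R_{\Phi_i}$, $R_{\Phi_i}^{-1}$ or $(R_{\Phi_1}-R_{\Phi_2})$ with $I$, $I^*$ and possibly $1/\tau$, and applies the mapping estimates from the previous step term by term.

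For the isomorphism in (ii), I would mirror the strategy of Theorem \ref{thm:MNP_isom}: write $N_\Phi = N_0\cdot id_{m\times m} + K_\Phi$, and show that $K_\Phi \colon L^2(\Dm,\Cm^m)\to \wtH^2(\Dm,\Cm^m)$ is bounded under the $C^4$-hypothesis on $\Phi$. The gain of one extra Sobolev order over $N_0$ is what yields compactness of $K_\Phi$ as a map into $\wtH^1$: near the interior diagonal this comes from principal-symbol cancellation between $I_\Phi^* I_\Phi$ and $I_0^* I_0 \cdot id_{m\times m}$, available precisely because $\Phi\in \mathfrak{u}(m)$; near the boundary corner of $\Dm\times \Dm$ it comes from the compact-support hypothesis $\mathrm{supp}(\Phi)\subset K\subset \Dm^{int}$, which makes $K_\Phi$ genuinely smoothing in a neighborhood of $\partial\Dm\times\partial\Dm$. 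The $C^4$ regularity is what is needed to run the multiplier arguments at the $\wtH^2$-level (two ``adjoint passes'' worth of derivatives). Combined with the known injectivity of $I_\Phi$ on simple surfaces from \cite{Paternain2012} and duality on the Zernike scale $\{\wtH^{s}(\Dm,\Cm^m)\}_{s\in \Rm}$, this yields that the Fredholm operator $N_\Phi\colon L^2 \to \wtH^1$ is an isomorphism.

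Continuity of $\Phi\mapsto N_\Phi^{-1}$ then follows abstractly: inversion is continuous on the open set $\B^\times(\wtH^1(\Dm,\Cm^m), L^2(\Dm,\Cm^m))$, so it suffices to compose it with the (Lipschitz) continuous map $\Phi\mapsto N_\Phi$ from (i), at the cost of allowing the slightly stronger $C^4$ topology on the domain. I expect the main obstacle to be the precise bookkeeping of derivative counts in the multiplier arguments — distinguishing what is required to act on the Zernike scale $\wtH^k(\Dm)$ (which mixes interior ellipticity with a degenerate weighted boundary behavior) from what would suffice for the classical Sobolev scale. This is where the asymmetric thresholds $C^2$ versus $C^4$ ultimately come from, and getting them sharp relies on the non-equivalence of the two scales near $\partial \Dm$, cf.\ Remark \ref{rem:tame}.
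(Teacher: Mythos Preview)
Your overall architecture matches the paper's: decompose $N_\Phi = N_0\cdot id_{m\times m} + K_\Phi$, show $K_\Phi$ gains one extra order on the Zernike scale and hence is compact relative to $N_0$, then upgrade Fredholmness to isomorphism. Two substantive differences are worth flagging.

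First, for the interior estimate on $K_\Phi$ you invoke ``principal-symbol cancellation'', i.e.\ the microlocal argument from the smooth case (Theorem~\ref{thm:MNP_isom}). With $\Phi$ only $C^2$ or $C^4$, the standard classical \PsiDO\ symbol calculus is not available, and this is precisely the obstacle the paper (following \cite{Bohr2021a}) addresses: the microlocal arguments are \emph{replaced} by Calder\'on--Zygmund type arguments, writing $N_\Phi$ and $K_\Phi$ as singular integral operators with explicit local kernels, and deriving norm bounds that require only finitely many derivatives of those kernels (hence of $\Phi$). Your multiplier-and-factorization route may ultimately yield the same conclusion, but as written it leans on a tool (symbol calculus for $I_\Phi^* I_\Phi$) that is not justified at finite regularity; the Schwartz-kernel analysis is what makes the $C^2$/$C^4$ thresholds transparent.

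Second, in passing from Fredholm to isomorphism you appeal directly to the injectivity result of \cite{Paternain2012}. That result is stated for smooth $\Phi$, so for $\Phi\in C_K^4$ it does not apply out of the box. The paper's route closes this gap via an approximation argument together with the quantitative stability estimate of \cite[Theorem~5.3]{Monard2019}: one approximates $\Phi$ by smooth $\Phi_\varepsilon$, uses the Lipschitz dependence $\Phi\mapsto N_\Phi$ and the stability estimate to control the limit, and thereby transfers injectivity (and surjectivity via duality) to the finite-regularity case. This step is missing from your proposal and is not recoverable from duality on the Zernike scale alone.

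Your treatment of the continuity of $\Phi\mapsto N_\Phi^{-1}$ via openness of $\B^\times$ and composition with the Lipschitz map from (i) is correct and is essentially how the paper proceeds.
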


We briefly recall the scheme of proof here, though the proof is involved and well-written in \cite{Bohr2021a}, an interested reader is encouraged to refer to the original article for the full detail.  

\begin{proof}[Proof of Theorem \ref{thm:BNisom} (roadmap)] 
	This is done by carefully proving analogues of \cite[Lemmas 6.17 and 6.18]{Monard2021} when $\Phi$ has finite regularity ($C^2$ or $C^4$). Microlocal arguments in the proof of Theorem \ref{thm:MNP_isom} are then replaced by Calder\'on-Zygmund type arguments, where the operators $N_\Phi$ and $K_\Phi$ (appearing in the proof of Theorem \ref{thm:MNP_isom} above) admit local singular integral representations in terms of some smooth kernels. Norm bounds for these operators can be derived, only requiring finitely many derivatives of said kernels, and hence of $\Phi$. Using the localization procedure in the proof of Theorem \ref{thm:MNP_isom} above, local Lipschitz estimates for the maps $C_K^2\ni \Phi\mapsto N_\Phi$ and $C_K^4\ni \Phi\mapsto K_\Phi$ are derived. These estimates, together with an approximation argument, give the analogue of \cite[Lemmas 6.17]{Monard2021}. The analogue of \cite[Lemma 6.18]{Monard2021} uses in addition the stability estimate from \cite[Theorem 5.3]{Monard2019}. 	
\end{proof}


\subsubsection{A sharp $H^{-1/2}\to L^2$ stability estimates for $I_\Phi$}\label{sec:stabIPhi}

As a follow-up to Theorem \ref{thm:BNisom} above, Bohr and Nickl formulate a sharp stability estimate for the attenuated X-ray transform on the Euclidean disk. This estimate captures the $1/2$-smoothing property of $I_\Phi$ on the Zernike-Sobolev scale \eqref{eq:SobolevZernike}.

\begin{theorem}[Theorem 4.1 in \cite{Bohr2021a}]\label{thm:BN_stab}
    Suppose $\Phi\colon \Dm\to \mathfrak{so}(m)$ is of regularity $C^4(\Dm)$ and its support is contained in a compact set $K\subset \Dm^{int}$. Then 
    \begin{align}
	\|f\|^2_{\wtH^{-1/2}(\Dm)} \le C(\Phi,K) \cdot \|I_\Phi f\|^2_{L^2(\partial_+ S\Dm)}
	\label{eq:BN_stab}
    \end{align} 
    for all $f\in L^2(\Dm,\Cm^m)$ and a constant $C(\Phi,K)>0$ which (for fixed $K$) depends continuously on $\Phi$ in the $C^4$-topology\footnote{Note: the space $L^2(\partial_+ S\Dm)$ is referred to as $L_\lambda^2(\partial_+ S\Dm)$ in \cite{Bohr2021a}.}.   
\end{theorem}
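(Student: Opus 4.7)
The strategy is to upgrade the endpoint isomorphism $N_\Phi\colon L^2(\Dm,\Cm^m)\to\wtH^1(\Dm,\Cm^m)$ of Theorem~\ref{thm:BNisom}(ii) to a half-index isomorphism $N_\Phi\colon\wtH^{-1/2}\to\wtH^{1/2}$, and then combine this with Cauchy--Schwarz for the positive semidefinite form $f\mapsto\langle N_\Phi f,f\rangle_{L^2}=\|I_\Phi f\|_{L^2(\partial_+ S\Dm)}^2$. Since the Zernike--Sobolev scale is defined via functional calculus of the self-adjoint operator $\L$ (see \eqref{eq:SobolevZernike}), $\wtH^{-s}$ for $s>0$ can be identified with the $L^2$-dual of $\wtH^s$ via the $L^2$ pairing.

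First, I would establish the half-index isomorphism. Because $N_\Phi=I_\Phi^*I_\Phi$ is $L^2$-self-adjoint, the bounded isomorphism $N_\Phi\colon L^2\to\wtH^1$ of Theorem~\ref{thm:BNisom}(ii) dualizes to a bounded isomorphism $N_\Phi\colon\wtH^{-1}\to L^2$. Complex interpolation between these two endpoints then yields a bounded isomorphism $N_\Phi\colon\wtH^{-1/2}\to\wtH^{1/2}$. Continuity of $\Phi\mapsto N_\Phi^{\pm 1}$ in the appropriate operator-norm topologies is inherited from Theorem~\ref{thm:BNisom}(ii) by applying the interpolation functor on $C^4_K$-bounded sets.

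Second, since $N_\Phi\ge 0$ is self-adjoint on $L^2(\Dm,\Cm^m)$, the sesquilinear form $(f,g)\mapsto \langle N_\Phi f,g\rangle_{L^2}=\langle I_\Phi f,I_\Phi g\rangle_{L^2(\partial_+ S\Dm)}$ satisfies Cauchy--Schwarz:
\begin{align*}
|\langle N_\Phi f,g\rangle_{L^2}|^2\le \langle N_\Phi f,f\rangle_{L^2}\cdot\langle N_\Phi g,g\rangle_{L^2}.
\end{align*}
Taking the supremum over $g\in\wtH^{-1/2}$ with $\|g\|_{\wtH^{-1/2}}\le 1$, the left side becomes $\|N_\Phi f\|_{\wtH^{1/2}}^2$ by $\wtH^{-1/2}$--$\wtH^{1/2}$ duality, while the factor $\sup_g \langle N_\Phi g,g\rangle_{L^2}$ on the right is bounded by $\|N_\Phi\|_{\B(\wtH^{-1/2},\wtH^{1/2})}$. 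Combined with the coercivity $\|f\|_{\wtH^{-1/2}}\le\|N_\Phi^{-1}\|_{\B(\wtH^{1/2},\wtH^{-1/2})}\|N_\Phi f\|_{\wtH^{1/2}}$ from the first step, this yields \eqref{eq:BN_stab} with
\begin{align*}
C(\Phi,K)=\|N_\Phi^{-1}\|_{\B(\wtH^{1/2},\wtH^{-1/2})}^2\cdot\|N_\Phi\|_{\B(\wtH^{-1/2},\wtH^{1/2})},
\end{align*}
continuous in $\Phi\in C^4$ by the first step.

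The main obstacle is the first step: the Hilbert-space identifications $[\wtH^{-1},L^2]_{1/2}=\wtH^{-1/2}$ and $[L^2,\wtH^1]_{1/2}=\wtH^{1/2}$ are routine once the scale is pinned down by functional calculus of $\L$, but preserving uniform (in $\Phi$) control of operator norms through duality and interpolation requires careful bookkeeping—especially to guarantee the $C^4$-continuous dependence on bounded sets advertised in Theorem~\ref{thm:BNisom}(ii) survives the process. Once the half-index isomorphism with continuous $\Phi$-dependence is in hand, the Cauchy--Schwarz step and the identity $\langle N_\Phi f,f\rangle=\|I_\Phi f\|_{L^2}^2$ combine to deliver the sharp $\wtH^{-1/2}\to L^2$ stability estimate essentially for free.
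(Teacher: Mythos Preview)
Your proposal is correct, and the overall architecture---reduce to a functional-analytic statement about the self-adjoint normal operator $N_\Phi$ built on Theorem~\ref{thm:BNisom}(ii), then extract the estimate from $\langle N_\Phi f,f\rangle=\|I_\Phi f\|_{L^2}^2$---matches the paper. The route to that extraction, however, differs.

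The paper does \emph{not} establish the half-index isomorphism $N_\Phi\colon\wtH^{-1/2}\to\wtH^{1/2}$. Instead it works directly with the spectral square root: writing $\langle f,h\rangle_{L^2}=\langle N_\Phi^{1/2}f,\,N_\Phi^{-1/2}h\rangle_{L^2}$ and applying ordinary Cauchy--Schwarz, the problem reduces to bounding $\sup_{\|h\|_{\wtH^{1/2}}\le 1}\|N_\Phi^{-1/2}h\|_{L^2}$. This is where the L\"owner--Heinz inequality (operator monotonicity of the square root) enters: from $N_\Phi^{-2}\le\underline{n}^2\,\L$ one deduces $N_\Phi^{-1}\le\underline{n}\,\L^{1/2}$, giving $\|N_\Phi^{-1/2}h\|_{L^2}^2\le\underline{n}\|h\|_{\wtH^{1/2}}^2$ and hence $C(\Phi,K)=\underline{n}(\Phi)$ directly, with continuous $\Phi$-dependence inherited immediately from Theorem~\ref{thm:BNisom}(ii).

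Your route replaces L\"owner--Heinz by complex interpolation of the endpoint isomorphisms (and their duals) to obtain the half-index isomorphism, then applies Cauchy--Schwarz for the positive form $\langle N_\Phi\cdot,\cdot\rangle$ rather than for the $L^2$ inner product after inserting $N_\Phi^{\pm 1/2}$. Both mechanisms are avatars of the same Heinz-type phenomenon on a Hilbert scale, so neither is deeper than the other. The paper's version is shorter and yields the clean constant $\underline{n}(\Phi)$; yours produces the composite constant $\|N_\Phi^{-1}\|_{\B(\wtH^{1/2},\wtH^{-1/2})}^2\cdot\|N_\Phi\|_{\B(\wtH^{-1/2},\wtH^{1/2})}$ and requires the extra bookkeeping you flag---though on this scale the interpolation inequality for operator norms handles that bookkeeping without difficulty.
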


\begin{proof}[Proof of Theorem \ref{thm:BN_stab}] In light of Theorem \ref{thm:BNisom}, the self-adjoint, nonnegative operator $N_\Phi = I_\Phi^* I_\Phi \colon L^2(\Dm)\to \wtH^1(\Dm)$, has a bounded inverse, and by Theorem \ref{thm:BNisom}.(ii), the coercivity constant $\underline{n}(\Phi) := \|N_\Phi^{-1}\|_{ {\cal B} (\wtH^{1}, L^2)}$ depends continuously on $\Phi$. 

	Then \eqref{eq:BN_stab} is obtained by functional-analytic arguments. Upon denoting $B$ the unit ball in $\wtH^{1/2}$ and using that $\wtH^{-1/2} = (\wtH^{1/2})^*$, \eqref{eq:BN_stab} goes as follows: 
	\begin{align*}
		\|f\|_{\wtH^{-1/2}(\Dm)} = \sup_{h\in B} \langle f, h\rangle_{L^2} = \sup_{h\in B} \langle N_\Phi^{1/2} f, N_\Phi^{-1/2} h\rangle_{L^2} &\le \sup_{h\in B} \|N_\Phi^{-1/2} h\|_{L^2}\cdot \langle N_\Phi f, f\rangle_{L^2}^{1/2}  \\
		&\le \sqrt{\underline{n}} \|I_\Phi f\|_{L^2(\partial_+ S\Dm)},
	\end{align*}
	where the last step is obtained by operator monotonicity of the squareroot, see \cite[Lemma 4.9]{Bohr2021a} for details.
\end{proof}


\section{Perspectives} \label{sec:perspectives}

The author attempted to convey that the quest for non-standard (i.e., either anisotropic in the interior, or encapsulating non-trivial boundary behavior) Sobolev scales that accurately capture the mapping properties of X-ray transforms is an actively developing story. There is plenty of room for improvement, be it at the level of general simple Riemannian manifolds, or in potential generalizations of these results (to transforms that integrate over not-necessarily-geodesic flows, or to weighted transforms). This is in this spirit that we formulate some open questions below.

\begin{enumerate}
	\item On general simple Riemannian surfaces (or higher-dimensional manifolds), how to construct analogues of the scales \eqref{eq:SobolevZernike}, \eqref{eq:scaledata} and \eqref{eq:scale_iso}, and how to prove forward mapping properties and stability estimates for $I_0$ on them?
	\item How to generalize the present results to different flows, in particular, non-reversible ones (e.g., magnetic flows)?
	\item In light of the undesirable feature of $I_0^\sharp I_0$ pointed out in Section \ref{sec:consequences}: On a general simple manifold, can one describe the space $I_0^\sharp I_0 (H^s(M))$? what is the boundary regularity of the eigenfunctions of $I_0^\sharp I_0$ ? Are there examples where such eigenfunctions are explicitly computable?
	\item To what extent the results presented extend to tensor fields? Although some results mentioned (e.g. Theorems \ref{thm:ASstab} and \ref{thm:PSstab}) have analogues for tensor fields, a systematic treatment remains to be written. 
	\item To what extent the results extend to transforms with attenuation and/or connection? How can one push the results of Section \ref{sec:atrt_perturb} past the confines of constant curvature disks and skew-hermitian attenuations that are supported away from the boundary ?
\end{enumerate}

More speculatively: 
\begin{enumerate}
	\item How to generalize existing mapping results to non-abelian X-ray transforms (see \cite{Monard2019,Monard2021}, and transforms which integrates over other families of submanifolds (e.g., of dimension $> 1$).
	\item How to classify simple surfaces where explicit functional relations between the X-ray transform and degenerate elliptic operators such as \eqref{eq:funcRel} can be found ?
\end{enumerate}

\section*{Acknowledgement}

The author acknowledges partial support from NSF CAREER grant DMS-1943580. The author would like to thank Gabriel Paternain, Richard Nickl, Rafe Mazzeo, Joey Zou, Rohit Mishra, Plamen Stefanov and Gunther Uhlmann for the fruitful discussions and collaborations that have all heavily contributed to furthering the understanding of the topics of this review article.

\bibliographystyle{plainnat}
\bibliography{../bibliography}

\end{document}